	\definecolor{linkred}{rgb}{0.7,0.2,0.2}
	\definecolor{linkblue}{rgb}{0,0.2,0.6}
\newcommand\numberthis{\addtocounter{equation}{1}\tag{\theequation}}
\def\bib@div@mark#1{%
 \@mkboth{{#1}}{{#1}}%
	}
\def\print@backrefs#1{%
 \space\SentenceSpace$\leftarrow$\csname br@#1\endcsname
}
\renewcommand{\PrintAuthors}[1]{%
 \ifx\previous@primary\current@primary
  \sameauthors\@empty
 \else
  \def\current@bibfield{\bib'author}%
		  \PrintNames{}{}{\scshape #1}%
 \fi
}
\def\MRhref#1#2{%
 \begingroup
  \parse@MR#1 ()\@empty\@nil%
  \href{\MR@url}{\texttt{\@tempd\vphantom{()}}}%
  \ifx\@tempe\@empty
  \else
   \ \href{\MR@url}{\texttt{(\@tempe)}}%
  \fi
 \endgroup
}%
\def\MR#1{%
 \relax\ifhmode\unskip\spacefactor3000 \space\fi
 \begingroup
  \strip@MRprefix#1\@nil
  \edef\@tempa{\@nx\MRhref{MR\@tempa}{\@tempa}}%
 \@xp\endgroup
 \@tempa
}
\numberwithin{equation}{section}
\newtheorem{theorem*}{Theorem}
\newtheorem{problem*}{Problem}
\newtheorem{definition}[equation]{Definition}
\newtheorem{claim}[equation]{Claim}
\newtheorem{lemma}[equation]{Lemma}
\newtheorem{corollary}[equation]{Corollary}
\newtheorem{proposition}[equation]{Proposition}
\newtheorem{remark}[equation]{Remark}
\newtheorem{fact}[equation]{Fact}
\newtheorem{example}{Example}
\newtheorem{observation}[equation]{Observation}
\newcommand{\M}{\overline{\operatorname{M}}}
\newcommand{\rkk}{\operatorname{rk}}
\newcommand{\sL}{\mathfrak{sl}}
\newcommand{\sP}{\mathfrak{sp}}
\newcommand{\rk}{\operatorname{rank}}
\newcommand{\VsP}{\mathbb{V}_{\sP_{2\ell}, \vec{\lambda}, 1}}
\newcommand{\VsL}{\mathbb{V}_{\sL_{2}, \vec{\lambda}, \ell }}
\newcommand{\VsLi}{\mathbb{V}_{\sL_{2}, \vec{\lambda}_i, 1 }}
\newcommand{\VsLl}{\mathbb{V}_{\sL_{2}, \vec{\lambda}_{\ell}, \ell }}
\newcommand{\VsLj}{\mathbb{V}_{\sL_{2}, \vec{\lambda}_j, 1 }}
\newcommand{\st}{\tilde{s}}
\newcommand{\m}{\mu}
\newcommand{\mt}{\tilde{\mu}}
\newcommand{\VlL}{\mathbb{V}_{\sP_{2\ell}, \vec{\lambda}, 1}}
\newcommand{\rlam}{r_{\vec{\lambda}}} 
\newcommand{\Vrlam}{\mathbb{V}_{\sP_{2\rlam}, \vec{\lambda}, 1}}
\begin{document}

\pagenumbering{arabic}

\title{Conformal Blocks in type C at level one}
\author{Natalie Hobson}
\date{\today}
\maketitle


\begin{abstract}
We investigate the behavior of  vector bundles of conformal blocks for $\sP_{2\ell}$ at level one on $\M_{0,n}$. We show their first Chern classes are equivalent to conformal blocks divisors for $\sL_2$ at level $\ell$ if and only if the corresponding vector bundles have rank one or zero, and all become equal when the Lie algebra $\ell$ is large enough. As a consequence of these results, we conclude the cone generated by these divisors is polyhedral. 
\end{abstract}

\section{Introduction}

Conformal blocks divisors are first Chern classes of certain vector bundles defined on the stack $\overline{\mathcal{M}}_{g,n}$. These bundles $\mathbb{V}(\mathfrak{g}, \vec{\lambda}, \ell)$ are determined by a simple Lie algebra $\mathfrak{g}$, a positive integer $\ell$, and an $n$-tuple $\vec{\lambda}$ of dominant integral weight for $\mathfrak{g}$ at level $\ell$. In genus zero, the vector bundles are globally generated, and so define base point free, and hence nef divisors on $\overline{\operatorname{M}}_{0,n}$, the smooth projective variety which represents $\overline{\mathcal{M}}_{0,n}.$ In this work, we focus our attention to this case.  

One recurring theme is that these divisor classes satisfy identities of various types \cite{ags, agss, BGMA,BGMB, Fakhruddin, GG, Hobson, Kaz, MUK13, MUK14, swinarskisl2}. There is interest in such relations, as there is some question of just how many distinct conformal blocks divisors span extremal rays of the nef cone. 

 In this work we:

\begin{enumerate}
\item[1. ] show in rank one, $c_1(\mathbb{V}(\sP_{2\ell}, \vec{\lambda}, 1))$ are equal to those of type A at level $\ell$ (Prop. \ref{main}),
\item[2. ] prove for $r$ large enough, $c_1(\mathbb{V}(\sP_{2r}, \vec{\lambda}, 1))$ are all equal (Prop. \ref{rsdivisor}), and
\item[3. ] describe rank behavior for $\mathbb{V}(\sP_{2\ell}, \vec{\lambda}, \ell)$ as the Lie algebra rank varies (Prop. \ref{rsrank}). 
\end{enumerate}

Even at level one, there are an infinite number of conformal blocks divisors, and so we can ask whether these divisors span a polyhedral cone. In types $A$ and $D$, level one bundles have rank one. It was shown by Giansiracusa and Gibney in \cite[Thm.~1.1]{GG} and Fakhruddin in \cite[Prop.~5.6]{Fakhruddin} that the cone of level one divisors in types A and D (respectively) is finitely generated. Our main result is an identity between divisors with different Lie data.

\begin{proposition}\label{main}
Define $\VsL$ and $\VsP$ as in Def. \ref{def1}, the first Chern class equality 
$$c_1(\VsL) = c_1(\VsP)$$
holds if and only if $\rk(\VsL) = 1$ or $0.$
\end{proposition}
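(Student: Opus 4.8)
The plan is to convert the equality of divisor classes into a statement about normalized conformal weights via Fakhruddin's genus-zero formula, exploiting that the Verlinde (fusion) rings attached to $\sL_2$ at level $\ell$ and to $\sP_{2\ell}$ at level $1$ are isomorphic under the weight dictionary of Definition~\ref{def1}, which matches the $\sL_2$-weight $\lambda_i\in\{0,1,\dots,\ell\}$ with the level-one $\sP_{2\ell}$-weight $\omega_{\lambda_i}$ (level--rank duality). Consequently $\rk(\VsL)=\rk(\VsP)$, and likewise all ranks of conformal blocks arising in factorizations of these bundles coincide; write $r$ for this common value of $\rk(\VsL)$. If $r=0$ both bundles are zero and there is nothing to prove, so assume $r\ge 1$. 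Fakhruddin's formula \cite{Fakhruddin} writes $c_1$ of a genus-zero conformal blocks bundle as a $\Q$-combination of the classes $\psi_i$ and $\delta_{0,I}$, with coefficients assembled from the normalized conformal weights $a_\lambda=\langle\lambda,\lambda+2\rho\rangle/\big(2(\ell+h^\vee)\big)$ and from ranks of the two blocks obtained on factoring along each $\delta_{0,I}$. Since the rank data agree for the two Lie algebras, subtracting the formula for $\VsL$ from that for $\VsP$ leaves the same formula with every $a_\lambda$ replaced by a \emph{difference} of conformal weights, and a short computation gives
$$a^{\sP}_{\omega_a}-a^{\sL}_{a}=\frac{a(\ell-a)}{2(\ell+2)}.$$
Hence $c_1(\VsL)=c_1(\VsP)$ if and only if the class $\mathcal B:=2(\ell+2)\big(c_1(\VsP)-c_1(\VsL)\big)$ vanishes, where $\mathcal B$ is produced by Fakhruddin's formula with the common rank data and with ``conformal weight'' $a\mapsto a(\ell-a)$. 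Since $\Pic(\M_{0,n})_\Q\cong N^1(\M_{0,n})_\Q$ and the $F$-curves span $N_1(\M_{0,n})_\Q$, it is enough to compute $\mathcal B\cdot F$ for all $F$-curves $F=F_{N_1,N_2,N_3,N_4}$; factoring along $F$ gives
$$\mathcal B\cdot F_{N_1,N_2,N_3,N_4}=\sum_{(\mu_1,\mu_2,\mu_3,\mu_4)}\Big(\prod_{j=1}^{4}\rk\,\mathbb{V}(\sL_2,\vec{\lambda}_{N_j}\cup\{\mu_j\},\ell)\Big)\,\Delta(\mu_1^{*},\mu_2^{*},\mu_3^{*},\mu_4^{*}),$$
where, for a four-tuple $\vec\nu$ with $r'=\rk\,\mathbb{V}(\sL_2,\vec\nu,\ell)$ and with $n^{ab}_\mu:=\rk\,\mathbb{V}(\sL_2,(\nu_a,\nu_b,\mu),\ell)$,
$$\Delta(\vec\nu)=r'\sum_{i=1}^{4}\nu_i(\ell-\nu_i)\;-\sum_{\{a,b\}\sqcup\{c,d\}=\{1,2,3,4\}}\ \sum_{\mu}n^{ab}_{\mu}\,n^{cd}_{\mu^{*}}\,\mu(\ell-\mu),$$
the outer sum on the right ranging over the three ways to split $\{1,2,3,4\}$ into two pairs $\{a,b\}$, $\{c,d\}$.

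Assume now $\rk(\VsL)\le 1$. If $r=1$, every block occurring in the factorization along a given $F$-curve has rank $0$ or $1$, so exactly one tuple $(\mu_1,\dots,\mu_4)$ survives the sum and the resulting four-pointed block $\mathbb{V}(\sL_2,(\mu_1^{*},\dots,\mu_4^{*}),\ell)$ has rank one. For a rank-one four-pointed $\sL_2$-block at level $\ell$, each of the three pair-splittings determines a \emph{unique} admissible node weight, say $p$, $q$, $r$, and $\Delta=0$ reads
$$\nu_1(\ell-\nu_1)+\nu_2(\ell-\nu_2)+\nu_3(\ell-\nu_3)+\nu_4(\ell-\nu_4)=p(\ell-p)+q(\ell-q)+r(\ell-r).$$
I would establish this by a finite case analysis: rank one forces any two of the level-$\ell$ fusion products $[\nu_a]\cdot[\nu_b]$ and $[\nu_c]\cdot[\nu_d]$ for $\sL_2$ (which are arithmetic progressions of step $2$) to meet in a single weight, so each of $p,q,r$ is one of $|\nu_a-\nu_b|$, $\nu_a+\nu_b$, $2\ell-\nu_a-\nu_b$; in each of the finitely many resulting cases the displayed identity becomes an elementary polynomial identity in the $\nu_i$ and $\ell$. (For instance, when $\nu_1=\nu_2+\nu_3+\nu_4\le\ell$ one finds $p=\nu_3+\nu_4$, $q=\nu_2+\nu_4$, $r=\nu_2+\nu_3$, and both sides of the identity equal $2\ell(\nu_2+\nu_3+\nu_4)-2(\nu_2^2+\nu_3^2+\nu_4^2)-2(\nu_2\nu_3+\nu_2\nu_4+\nu_3\nu_4)$.) Therefore $\mathcal B\cdot F=0$ for every $F$-curve, so $\mathcal B=0$ and $c_1(\VsL)=c_1(\VsP)$; the case $r=0$ is trivial, both sides being $0$.

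For the converse, suppose $\rk(\VsL)\ge 2$. Here the crucial input is that $\Delta(\vec\nu)\ge 0$ for every four-tuple $\vec\nu$, with equality precisely when $\rk\,\mathbb{V}(\sL_2,\vec\nu,\ell)\le 1$. Since $a(\ell-a)\ge 0$ vanishes exactly at $a\in\{0,\ell\}$, a block of rank $\ge 2$ on four points has some marked weight $\nu_i$, or some node weight, strictly interior to $\{0,\dots,\ell\}$; a configuration analysis of the overlaps of the three step-$2$ progressions $[\nu_a]\cdot[\nu_b]$ — now overlapping in at least two weights — shows that the $\psi$-contribution $r'\sum_i\nu_i(\ell-\nu_i)$ strictly exceeds the boundary contribution, and that $\Delta\ge 0$ holds in general. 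Granting this, every summand in the factorization formula for $\mathcal B\cdot F$ is non-negative, so $\mathcal B=0$ if and only if every four-pointed block arising in a factorization of $\VsL$ has rank $\le 1$; and a routine degeneration argument (induction on $n$, using that the rank of a conformal blocks bundle is a sum of products of ranks of its boundary blocks) shows that this happens if and only if $\rk(\VsL)\le 1$. As $\rk(\VsL)\ge 2$, we conclude $\mathcal B\ne 0$, i.e. $c_1(\VsL)\ne c_1(\VsP)$, completing the biconditional.

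The main obstacle is the two combinatorial facts about four-pointed $\sL_2$-blocks at level $\ell$: the telescoping identity for rank-one blocks, and the statement $\Delta(\vec\nu)\ge 0$ with equality iff the block has rank $\le 1$. Both reduce to understanding how the three fusion products $[\nu_a]\cdot[\nu_b]$ intersect, hence to finitely many explicit cases, but the bookkeeping over these cases is the delicate part. Everything else — the identification of fusion rings, the invocation of Fakhruddin's formula, and the degeneration reduction to $n=4$ — is formal.
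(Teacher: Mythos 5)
Your overall architecture is the same as the paper's: reduce the class equality to intersection numbers with $F$-curves, observe that the three-point fusion data agree for $\sL_2$ at level $\ell$ and $\sP_{2\ell}$ at level one, and boil everything down to a four-point statement (your $\Delta(\vec\nu)\geq 0$ with equality exactly when the four-point block has rank $\leq 1$ is equivalent, after clearing the factor $2(\ell+2)$, to the paper's Lemma~\ref{RankDegree} together with Corollary~\ref{Inequality}), plus the forward observation that a rank-one bundle leaves a single surviving term in each factorization (Observation~\ref{OneTerm}). The conformal-weight difference $a(\ell-a)/(2(\ell+2))$ is computed correctly and is a clean way to package the degree comparison. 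However, as written the proposal has two unproved pivots, and one of them is a genuine gap rather than deferred bookkeeping. First, the four-point dichotomy itself (your telescoping identity for rank-one blocks and the inequality $\Delta\geq 0$ with equality iff rank $\leq 1$) is only sketched; you acknowledge the case analysis is ``the delicate part,'' and without it neither direction of the proposition is established. The paper does this by explicit degree formulas (Lemmas~\ref{DegreeSL}, \ref{DegreeSP}, \ref{DegreeSPRank}) combined with the rank-one classification (Lemma~\ref{RankOne}); your route through overlaps of fusion progressions is plausible but not carried out.

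The more serious issue is the converse direction, where you write that ``a routine degeneration argument (induction on $n$)'' shows that if every four-point block arising in every $F$-curve factorization of $\VsL$ has rank $\leq 1$, then $\rkk(\VsL)\leq 1$. This is exactly Proposition~\ref{RORFLemma} in the paper, and it is not routine: the naive induction does not close, because from a single factorization one must show both that only one attaching weight contributes and that the resulting $(n-2)$-point factor again satisfies the hypothesis for all of \emph{its} $F$-curve factorizations; verifying the latter requires recombining factorizations of the original bundle, and ruling out two contributing attaching weights already takes a multi-case argument at $n=5$ (after normalizing three weights to be $\leq \ell/2$ by plussing), including the exceptional configuration $(\tfrac{\ell}{2},b,c,\tfrac{\ell}{2},\tfrac{\ell}{2})$ with $b+c=\tfrac{\ell}{2}$ that is only excluded by switching to a second partition. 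Without this step you have only shown: divisor equality holds iff all four-point blocks in all factorizations have rank $\leq 1$; the equivalence of that condition with $\rkk(\VsL)\leq 1$ is the heart of the ``only if'' direction and needs a real proof, not a one-line appeal to degeneration.
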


As a consequence of our first result, we show finite generation for divisors of bundles of type C at level one and rank one.

\begin{corollary}
The cone $$\mathcal{C}=ConvHull \{ c_1(\VsP) : \rk(\VsP) = 1 \},$$ is a full dimensional, finitely generated subcone of the nef cone of $\overline{\operatorname{M}}_{0,n}$.
\end{corollary}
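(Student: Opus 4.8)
The plan is to deduce finite generation of $\mathcal{C}$ from Proposition \ref{main} together with the known finite generation of the level one cone in type $A$ for $\sL_2$. By Proposition \ref{main}, every generator $c_1(\VsP)$ with $\rk(\VsP)=1$ equals $c_1(\VsL)$ for the corresponding weight data, where this type $A$ bundle also has rank one (since the Chern class equality forces $\rk(\VsL)\in\{0,1\}$, and rank zero would make $c_1$ the zero divisor). Hence
$$
\mathcal{C} \;=\; \mathrm{ConvHull}\bigl\{\, c_1(\mathbb{V}_{\sL_2,\vec\mu,\ell}) \;:\; \ell\ge 1,\ \vec\mu \text{ dominant at level } \ell,\ \rk = 1 \,\bigr\}.
$$
Thus $\mathcal{C}$ is contained in the cone generated by \emph{all} $\sL_2$ conformal blocks divisors of rank one at arbitrary level, and in fact is a subcone of the full $\sL_2$ nef-divisor cone. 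The first step, then, is to make this translation precise: I would state that under the identification of Proposition \ref{main}, $\mathcal{C}$ is exactly the subcone of the $\sL_2$ level-varying divisor cone spanned by those classes that simultaneously arise from a rank-one type $C$ bundle.

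The second step is to invoke the classification/finiteness input for $\sL_2$. The relevant fact is that for a fixed number of points $n$, the $\sL_2$ conformal blocks divisors at all levels span only finitely many rays; more precisely, the rank-one $\sL_2$ divisors at level $\ell$ are among the finitely many extremal nef classes described by Fakhruddin and by Giansiracusa--Gibney for the level one (type $A$) cone, after the standard level-rank style reduction that expresses higher level $\sL_2$ rank-one blocks in terms of lower data. I would cite \cite{Fakhruddin} and \cite{GG} for the statement that the cone of level one divisors in type $A$ is finitely generated, and note that the rank-one condition singles out a subset of these, so $\mathcal{C}$, being the convex hull of a subset of a finite generating set (up to scaling), is itself finitely generated; closedness of $\mathcal{C}$ follows since a finitely generated cone is automatically closed.

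For full-dimensionality, I would exhibit explicitly $n$ linearly independent classes inside $\mathcal{C}$. The natural candidates are the $F$-curve-dual type classes: take $\vec\lambda$ with $\lambda_i=\omega_1$ (the first fundamental weight of $\sP_{2\ell}$) at a subset of the marked points chosen so that the parity/propagation conditions give rank exactly one, e.g. the analogue of the $\sL_2$ divisors $\mathbb{V}_{\sL_2,(\omega_1,\dots,\omega_1),\ell}$ with an even number of nonzero weights. As the subset of marked points varies, these give classes whose intersection numbers with a spanning set of curve classes on $\M_{0,n}$ form a nonsingular matrix, exactly as in the type $A$ argument; I would reference the corresponding computation in \cite{Fakhruddin} or \cite{GG}. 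Finally, since each $c_1(\VsP)$ is globally generated in genus zero (stated in the Introduction), $\mathcal{C}$ lies in the nef cone, completing the claim.

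The main obstacle I anticipate is the second step: one must be careful that "rank one $\sL_2$ divisors at all levels" really do span only finitely many rays, since a priori there are infinitely many levels. This requires either a level-rank duality argument reducing everything to level one, or a direct scaling/propagation argument showing each rank-one $\sL_2$ class at level $\ell$ is proportional to a level one class. Pinning down this reduction cleanly — rather than the full-dimensionality bookkeeping, which is routine — is where the real content lies.
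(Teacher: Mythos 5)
Your first step matches the paper: Proposition~\ref{main} converts each generator $c_1(\VsP)$ with $\rk(\VsP)=1$ into the rank-one $\sL_2$ class $c_1(\VsL)$, so $\mathcal{C}$ sits inside the cone spanned by rank-one $\sL_2$ divisors at arbitrary level. The gap is precisely the step you flag at the end and then leave open: you never establish that this level-varying family spans a finitely generated cone, and the two mechanisms you float are not the right ones. A rank-one $\sL_2$ divisor at level $\ell$ is in general \emph{not} proportional to a single level-one class (the paper's example on $\M_{0,9}$ writes one such class as a sum of nine \emph{distinct} level-one classes), and it is not literally ``among'' the finitely many level-one type A classes of \cite{GG} and \cite{Fakhruddin}; it only lies in the cone they generate. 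Moreover, even granting that containment, your inference ``convex hull of a subset of a finite generating set, hence finitely generated'' does not work: a subcone of a finitely generated cone need not be finitely generated (a round cone inside an orthant already fails), so containment alone cannot finish the argument.

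The paper closes exactly this gap with its earlier decomposition theorem, \cite[Thm.~1.1]{Hobson} (used again as Thm.~5.1 there): every rank-one $\sL_2$ divisor at level $\ell$ is an \emph{effective sum} of $\sL_2$ level-one divisors. This gives $\mathcal{C}\subseteq$ the cone on the finitely many level-one $\sL_2$ classes (finitely many weight vectors in $\{0,1\}^n$ with even sum), and the reverse inclusion is immediate because $\sP_2\cong\sL_2$, so every nonzero level-one $\sL_2$ class is itself $c_1$ of a rank-one type C level-one bundle. Hence $\mathcal{C}$ \emph{equals} the cone generated by the rank-one $\sL_2$ divisors, i.e.\ a cone with an explicit finite generating set; this is how Corollary~\ref{divisors} is stated, with finite generation and full-dimensionality of the ambient level-one cone quoted from \cite{GG} and \cite{Hobson} rather than reproved. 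Your full-dimensionality sketch is plausible and close to what those references do, but without the decomposition input (or some substitute for it) the central finiteness claim remains unproved in your proposal.
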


In \cite[Thm.~1.1]{Hobson} we explicitly decompose $c_1(\VsP)$ for $\rkk(\VsP) = 1$ into an effective sum of level one divisors of $\sL_2$ bundles. Using this and results from \cite{GG} we describe the maps given by $c_1(\VsP)$ as a map to a product of GIT quotients known as \textit{generalized Veronese quotients} (see Claim \ref{Maps}).

Furthermore, we give an explicit value $\rlam$, such that for any $r \geq \rlam$ the first Chern classes $c_1(\mathbb{V}(\sP_{2r}, \vec{\lambda}, 1))$ become equal, and in particular are nontrivial if $\rkk(\mathbb{V}(\sP_{2\rlam}, \vec{\lambda}, 1)) > 0.$ 


\begin{proposition}\label{rsdivisor}
Let $\rlam$ be the \textit{stabilizing Lie rank} (see Def. \ref{rlam}) associated to a fixed vector $\vec{\lambda} =(a_1, ..., a_n)$ of nonnegative integers. If $r \geq \rlam $ then
$$c_1(\mathbb{V}_{\sP_{2\rlam}, \vec{\lambda}, 1})= c_1( \mathbb{V}_{\sP_{2r}, \vec{\lambda}, 1}),$$ and particularly, these divisors are nontrivial if and only if $\rk(\mathbb{V}(\sP_{2\rlam}, \vec{\lambda}, 1)) > 0$.
\end{proposition}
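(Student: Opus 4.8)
The plan is to reduce the asserted equality of nef divisor classes to an equality of their intersection numbers with $F$-curves, and then to check that, for $r\ge\rlam$, every ingredient of Fakhruddin's formula for those numbers is independent of $r$. Since $\M_{0,n}$ is a genus‑zero moduli space, $\VrL$ is globally generated and $c_1(\VrL)$ is nef; as the $F$-curves span $N_1(\M_{0,n})_{\Q}$, the class $c_1(\VrL)$ is determined by the numbers $c_1(\VrL)\cdot F$. For an $F$-curve $F=F_{N_1,N_2,N_3,N_4}$, Fakhruddin's formula \cite{Fakhruddin} writes $c_1(\VrL)\cdot F$ as a finite sum over quadruples $\vec\mu=(\mu_1,\dots,\mu_4)$ of level‑one weights of $\sP_{2r}$, with $\vec\mu$-term
\[
\Bigl(\textstyle\prod_{k=1}^{4}\rk\,\mathbb{V}\bigl(\sP_{2r},(\{a_i\}_{i\in N_k},\mu_k),1\bigr)\Bigr)\,\cdot\,\deg\mathbb{V}\bigl(\sP_{2r},(\mu_1^{*},\dots,\mu_4^{*}),1\bigr)\big|_{\M_{0,4}}.
\]
Thus it suffices to prove: \emph{(a)} only quadruples $\vec\mu$ built from a set of weights bounded in terms of $\vec\lambda$ alone contribute a nonzero term; \emph{(b)} for each such $\vec\mu$, the four rank factors are independent of $r$ once $r\ge\rlam$; and \emph{(c)} for each such $\vec\mu$, the displayed degree on $\M_{0,4}$ is independent of $r$ once $r\ge\rlam$.

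Claims \emph{(a)} and \emph{(b)} are representation‑theoretic, and are precisely what Definition \ref{rlam} is built to control. For \emph{(a)}: the factor $\rk\,\mathbb{V}(\sP_{2r},(\{a_i\}_{i\in N_k},\omega_m),1)$ vanishes unless $\omega_m$ occurs in the level‑one fusion product of $\{\omega_{a_i}:i\in N_k\}$, and since a conformal blocks rank never exceeds the corresponding classical tensor‑product multiplicity, $V_{\omega_m}$ must occur in $\bigotimes_{i\in N_k}V_{\omega_{a_i}}$; comparing highest weights forces $m\le\sum_i a_i$. Hence the contributing $\mu_k$ lie in the fixed set $\{\omega_0,\dots,\omega_{\sum_i a_i}\}$, and for $r>2\sum_i a_i$ no reflected (``wrap‑around'') fusion occurs among such weights, so the collection of contributing $\vec\mu$ no longer changes as $r$ grows. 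For \emph{(b)}: the level‑one fusion ring of $\sP_{2r}$ is isomorphic to the level‑$r$ fusion ring of $\sL_2$ — the same level–rank correspondence that relates $\sP_{2r}$ at level one to $\sL_2$ at level $r$ in Proposition \ref{main} — so each rank factor equals the $\sL_2$-level-$r$ conformal blocks rank of a fixed tuple of weights. These $\sL_2$ ranks are nondecreasing in the level and agree with the classical multiplicities for all levels past an explicit bound depending only on the weights; $\rlam$ of Definition \ref{rlam} is (a bound for) this bound, enlarged if necessary so that \emph{(c)} holds as well. Consequently all rank factors are constant for $r\ge\rlam$.

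For \emph{(c)}, Fakhruddin's computation on $\M_{0,4}\cong\Pro^1$ expresses $\deg\mathbb{V}(\sP_{2r},(\mu_1^{*},\dots,\mu_4^{*}),1)$ as a finite $\Q$-linear combination of the level‑one conformal weights $c_{\omega_m}(r)=\tfrac{m(4r+1-2m)}{4(r+2)}$ of the finitely many participating representations, with coefficients that are products of ranks of $\sP_{2r}$-blocks whose weights are again bounded in terms of $\vec\lambda$ (hence constant for $r\ge\rlam$ by \emph{(b)}). Since $c_{\omega_m}(r)=m-\tfrac{m(2m+7)}{4(r+2)}$, this combination has the shape $L+\tfrac{c}{r+2}$ for constants $L,c$ (once $r\ge\rlam$); being the degree of a globally generated bundle on $\Pro^1$ it is a nonnegative integer, so it is a bounded monotone integer sequence in $r$, hence eventually constant, and the point where it stabilizes is effective and controlled by $\sum_i a_i$ together with the (now constant) rank products. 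Taking $\rlam$ to dominate this threshold as well, \emph{(a)}--\emph{(c)} give $c_1(\Vrlam)\cdot F=c_1(\VrL)\cdot F$ for every $F$-curve $F$ and every $r\ge\rlam$, whence $c_1(\Vrlam)=c_1(\VrL)$. The final equivalence is then immediate: if $\rk(\Vrlam)=0$ the bundle and its first Chern class vanish, while if it is positive the stabilized class is the nonzero effective divisor exhibited, in the rank‑one case, by the decomposition of \cite[Thm.~1.1]{Hobson} into $\sL_2$-level-one divisors, and in general read off from the stabilized expression above.

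I expect the real work to be the bookkeeping behind Definition \ref{rlam}: one must verify that a single explicit quantity attached to $\vec\lambda$ simultaneously dominates (i) the level at which all the relevant $\sL_2$ fusion numbers reach their classical values, (ii) the bound $2\sum_i a_i$ ruling out wrap‑around fusions, and (iii) the effective bound above making Fakhruddin's $\M_{0,4}$-degrees constant — and that the three‑ and four‑pointed $\sP_{2r}$-blocks occurring throughout really are governed, uniformly in $r$, by weights bounded in terms of $\vec\lambda$. Everything else is a direct application of Fakhruddin's formulas together with the level–rank identification of fusion rings.
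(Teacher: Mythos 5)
Your overall strategy---reducing the divisor equality to intersections with $F$-curves via Fakhruddin's factorization formula, and then arguing that the contributing attaching weights, the rank factors, and the four-pointed degrees are all independent of $r$---is the same as the paper's. The genuine gap is quantitative: the proposition asserts stabilization at the \emph{specific} threshold $\rlam=\tfrac{|\vec{\lambda}|}{2}-1$ of Definition \ref{rlam}, while your argument only produces stabilization for $r$ ``sufficiently large.'' At several points you enlarge the threshold at will (``$\rlam$ \dots enlarged if necessary so that (c) holds,'' the bound $r>2\sum_i a_i$ ruling out wrap-around fusion, ``taking $\rlam$ to dominate this threshold as well''), and your closing paragraph concedes that checking a single explicit quantity attached to $\vec{\lambda}$ does all of these jobs at once is ``the real work''---which is exactly what remains unproved. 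In particular, your step (c) argues that the four-pointed degree has the shape $L+c/(r+2)$ and, being a bounded monotone integer sequence, is eventually constant; an asymptotic argument of this kind cannot locate the exact point of stabilization, whereas the statement requires equality already at $r=\rlam$.

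The paper closes precisely this gap with explicit formulas rather than eventual-constancy arguments: (i) the Generalized Triangle Inequality (transported to type C by the rank equality with $\sL_2$, Fact \ref{rankfact}) shows every nonzero term of Fakhruddin's sum has attaching weights $\nu_i\le\rlam$---if some $\nu_i\ge\rlam+1$ the four-pointed degree factor vanishes by the closed-form formula of Lemma \ref{DegreeSP}---so the sums at Lie rank $\rlam$ and at Lie rank $r$ range over the same attaching data; (ii) Lemma \ref{4pointdegree}, read directly off Lemma \ref{DegreeSP}, gives \emph{exact} equality of the four-pointed degree terms as soon as the Lie rank is at or above $\rlam$; and (iii) Proposition \ref{npoint} gives equality of the rank factors whenever the critical level of $(\lambda_{I_i},\mu_i)$ is at most $\rlam$, and in the remaining case the paper shows the attaching data must satisfy $\mu_1=\mu_2+\mu_3+\mu_4$, which forces the accompanying four-pointed degree to vanish, so those terms never contribute. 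This last subtlety---that rank factors whose own critical level exceeds $\rlam$ may indeed change with $r$, but always multiply a zero degree---is not addressed in your proposal, and neither is the final nonvanishing equivalence, which the paper obtains from the degree formulas (Corollary \ref{nonVanishing}) rather than from the rank-one decomposition alone.
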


We provide a relationship on the ranks of such bundles when the Lie algebra rank varies below this range. In the language of $\sL_2$ bundles, we show the vector bundle ranks strictly decrease when the level strictly decreases below the \textit{critical level} (see Remark~\ref{CL}). 


\begin{proposition}\label{rsrank}
For integers $r$ and $r'$, such that, $r > \rlam \geq r' $ and $\vec{\lambda} \in P_1(\sP_{2r'})^n$, we have
$$\rk(\mathbb{V}_{\sP_{2(r)}, \vec{\lambda}, 1}) = \rk(\mathbb{V}_{\sP_{2(\rlam +1)}, \vec{\lambda}, 1}) > \rk(\mathbb{V}_{\sP_{2(r')}, \vec{\lambda}, 1}).$$
\end{proposition}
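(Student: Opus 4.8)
The plan is to translate everything into the language of $\mathfrak{sl}_2$ conformal blocks using the type C / type A rank–level correspondence, and then to reduce the statement to a combinatorial count of weights. First I would recall (presumably established in the earlier sections, via the arguments behind Prop.~\ref{main} and the decomposition of \cite{Hobson}) that for a weight vector $\vec\lambda=(a_1,\dots,a_n)$ with each $a_i\in\{0,1\}$ viewed appropriately, the rank of $\mathbb{V}(\mathfrak{sp}_{2r},\vec\lambda,1)$ equals the rank of an $\mathfrak{sl}_2$ conformal block bundle $\mathbb{V}(\mathfrak{sl}_2,\vec\lambda,r)$ at level $r$ (the level and the Lie rank exchanging roles). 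Thus the inequality to be proven becomes a statement about how $\dim$ of $\mathfrak{sl}_2$ conformal blocks behaves as the level increases: it stabilizes once the level reaches $r_{\vec\lambda}$ (this is exactly the content feeding into Prop.~\ref{rsdivisor}, since the critical level is the point past which no new constraints are imposed), and it is strictly smaller below the stabilizing value when $\vec\lambda$ still makes sense at the lower level.

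The key steps, in order, would be: (1) Fix $r>r_{\vec\lambda}\ge r'$ with $\vec\lambda\in P_1(\mathfrak{sp}_{2r'})^n$, and rewrite all three ranks as dimensions of $\mathfrak{sl}_2$ conformal block spaces at levels $r$, $r_{\vec\lambda}+1$, and $r'$ respectively. (2) For the equality $\rk(\mathbb{V}_{\sP_{2r},\vec\lambda,1})=\rk(\mathbb{V}_{\sP_{2(r_{\vec\lambda}+1)},\vec\lambda,1})$, invoke the stabilization already encoded in Def.~\ref{rlam}: once the level is at least $r_{\vec\lambda}$, the set of admissible internal weight labelings in the factorization/fusion-rule count no longer changes, because the relevant truncation $a+b+c\le 2\ell$ at each trivalent vertex becomes vacuous. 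This is the "$c_1$ stabilizes" input of Prop.~\ref{rsdivisor} upgraded to the level of ranks, and it gives the equality for all $r>r_{\vec\lambda}$, in particular $r=r_{\vec\lambda}+1$ is already in the stable range. (3) For the strict inequality, use the $\mathfrak{sl}_2$ fusion-rule / factorization description of the rank as a count of admissible trees: passing from level $r_{\vec\lambda}+1$ down to $r'\le r_{\vec\lambda}$ genuinely removes at least one admissible labeling, because by definition of the stabilizing rank there is a trivalent decomposition of the $n$-pointed tree and an internal edge whose label is forced to exceed $2r'$ in some valid configuration that survives at the higher level; I would exhibit such a configuration explicitly (e.g. propagate a large internal weight along a caterpillar tree) and check it is killed by the level-$r'$ truncation while the total count cannot increase, so the inequality is strict.

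Concretely, for the strict drop I expect to argue via the Verlinde-type formula or, more elementarily, via the recursive "propagation" for $\mathfrak{sl}_2$: the rank of $\mathbb{V}(\mathfrak{sl}_2,(a_1,\dots,a_n),\ell)$ is a sum over compatible interior labels, and lowering $\ell$ only deletes terms, never adds them; so monotonicity $\rk_{\ell'}\le\rk_{\ell}$ for $\ell'\le\ell$ is automatic, and the whole point is to show the deletion is nonempty precisely when $\ell'$ drops below $r_{\vec\lambda}$. That nonemptiness is where the definition of $r_{\vec\lambda}$ must be used in an essential way — $r_{\vec\lambda}$ is characterized as the smallest rank at which all the "long" interior labels first become admissible, so just below it at least one such label is excluded while still being forced by some propagation path. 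The main obstacle, and the step I would spend the most care on, is precisely this last point: showing the count strictly decreases (not merely is non-increasing) when the level passes below $r_{\vec\lambda}$. This requires pinning down the combinatorial meaning of $r_{\vec\lambda}$ from Def.~\ref{rlam} and producing an explicit admissible interior-labeling at level $r_{\vec\lambda}+1$ that violates the level-$r'$ bound; everything else (the monotonicity, the stabilization at the top, and the bookkeeping translating type C at level one to type A at level $r$) is routine once the dictionary from Prop.~\ref{main} is in hand.
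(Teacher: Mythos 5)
Your overall architecture is sound and viable, but it is a genuinely different route from the paper's, and the one step you flag as "the main obstacle" is exactly where the paper does its real work, so it cannot be left as a sketch. The paper reduces to $\sL_2$ not through the dictionary of Prop.~\ref{main} but through the rank equality of Fact~\ref{rankfact} (Littelmann's rank--level identity), and then proves the $\sL_2$ statement (Prop.~\ref{npoint}) by induction on $n$: a two-step factorization, a base case $n=4$ handled by the explicit fusion inequalities (Lemma~\ref{4pointlemma}), and, for the strict increase from level $\ell$ to $\ell+1$ below the critical level, the production of a \emph{new} nonzero attaching weight $\mu+2$ (where $\mu$ is the largest attaching weight contributing at level $\ell$), with nonvanishing of the new term verified via the criterion of \cite[Thm.~1.1]{Hobson}. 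You instead propose a global count: for $\sL_2$ all three-point fusion numbers are $0$ or $1$, so the rank is the number of admissible edge-labelings of a fixed tree; these sets are nested in the level, they stabilize once the level constraints become vacuous, and strictness follows from exhibiting one labeling present at level $\rlam+1$ and absent at level $r'$. That is a legitimate and arguably more direct argument, and your stabilization step can be made rigorous: by the generalized triangle inequality \cite[Lemma~3.8]{ags} applied to each subtree, every classically admissible labeling has edge labels at most $|\vec{\lambda}|/2=\rlam+1$ and vertex sums at most $|\vec{\lambda}|=2(\rlam+1)$, so the level-$\ell$ truncation is vacuous for all $\ell\geq\rlam+1$ (the paper gets this stabilization by citing Witten's dictionary and \cite{BGMA}).

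Two concrete corrections are needed. First, your threshold is off by one: you assert stabilization "once the level is at least $r_{\vec{\lambda}}$," but at level $\rlam$ the vertex-sum bound is $2\rlam=|\vec{\lambda}|-2$, which is not vacuous; indeed the proposition asserts a \emph{strict} drop already at $r'=\rlam$, so the stable range is $\ell\geq\rlam+1$ and nothing less. Second, the strictness step is not yet a proof: your characterization of $\rlam$ as "the smallest rank at which all the long interior labels first become admissible" is not Def.~\ref{rlam} (which is simply $\rlam=|\vec{\lambda}|/2-1$), and "an internal edge whose label is forced to exceed $2r'$" conflates the edge bound ($\leq r'$) with the vertex-sum bound ($\leq 2r'$). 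To close the gap along your lines one can argue as follows: on a caterpillar tree choose a vertex splitting the external weights into two sides each of total at most $|\vec{\lambda}|/2$ plus the weight at that vertex (possible since every $a_i\leq r'\leq\rlam<|\vec{\lambda}|/2$), and take the labeling in which the two internal edges at that vertex carry the full sums of the external weights on their respective sides; this labeling is classically admissible, its vertex sum there equals $|\vec{\lambda}|=2\rlam+2>2r'$, so it survives at level $\rlam+1$ but is killed at every level $r'\leq\rlam$, and since every level-$r'$ labeling is also a level-$(\rlam+1)$ labeling, the count drops strictly. With those repairs your argument is complete and independent of the paper's induction; what the paper's stepwise approach buys instead is the finer quantitative statement in Lemma~\ref{4pointlemma} (the rank drops by exactly $|s|$ for $n=4$) and a mechanism reused in the proof of Prop.~\ref{rsdivisor}.
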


\subsection{Outline of paper} In Section \ref{definitions} we establish our notation and state definitions and previous results. In Section \ref{proof} we prove Proposition \ref{main}. We describe next in Section \ref{Veronese}, the targets of the maps given by the divisors in Proposition~\ref{main}. In Section \ref{sl2ranks} we prove Proposition \ref{rsrank} in the context of ranks of $\sL_2$ bundles; this result is used in the proof of Proposition \ref{rsdivisor} in Section \ref{stabilizingproof}. We give examples in Section \ref{examples} showing the necessity of rank one in Proposition \ref{main}. We conclude in Section \ref{corollaries} with collection of many important open questions in the study of conformal blocks divisors and show several corollaries from our results that begin to solve these questions.  



\subsection{Acknowledgements}  The author would like to thank Angela Gibney and Dave Swinarski for many useful discussions and feedback. Particularly, she thanks Swinarski for his insights in Lemma~\ref{DegreeSPRank} and Claim~\ref{plussing}. She used Swinarski's Macaulay2 package \cite{DaveM2} for many computations in this work. The author acknowledges support from the RTG in Algebra, Algebraic Geometry, and Number Theory, at the University of Georgia (NSF grant DMS-1344994).

\section{Definitions and Previous Results} \label{definitions}

For the Lie algebra $\sL_2$ a nonzero dominant integral weight at level $\ell$ is given by a nonnegative integer multiple less than or equal to $\ell$ of the fundamental dominant weight $\omega_1$. For the Lie algebra $\sP_{2\ell}$ a nonzero dominant integral weight at level one is given by a single fundamental dominant weight $\omega_a$, where $a$ is an integer such that $0 \leq a \leq \ell$. The set of dominant integral weights at level $\ell$ for $\sL_2$ and at level one for $\sP_{2\ell}$ are thus given by the following sets respectively:
 $$P(\sL_2, \ell) = \{ i \omega_1\}_{i=0}^{\ell} \text{ and } P(\sP_{2\ell}, 1) = \{ \omega_{i} \}_{i=0}^{\ell}.$$

In this way, an $n$-tuple of integers $\vec{\lambda}=(a_1, ..., a_n)$ such that $0 < a_i \leq \ell$ for some integer $\ell$ specifies an $n$-tuple of weights from $P(\sL_2, \ell)$ or $P(\sP_{2\ell}, 1)$. It will often be convenient to fix such an $n$-tuple and define the vector bundles of conformal blocks associated to this weight vector for $\sL_2$ at level $\ell$ and $\sP_{2\ell}$ at level one. Throughout this paper, we always assume the integers in $\vec{\lambda}$ are weakly decreasing and the sum $|\vec{\lambda}| = \sum_{i=1}^n a_i$ is even. These vector bundles play the lead role in our story; we simplify notation by giving the following definition.  

\begin{definition} \label{def1}
Let $\vec{\lambda}=(a_1, ..., a_n)$ be a fixed $n$-tuple of weakly decreasing integers such that the sum $|\vec{\lambda}| = \sum_{i=1}^n a_i$ is even. Let $\ell$ be some fixed integer $\ell \geq a_1$, we define the following vector bundles $$\mathbb{V}_{\sL_2, \vec{\lambda}, \ell} :=  \mathbb{V}(\mathfrak{sl}_{2}, \ell, \vec{\lambda}) \text{ and } \mathbb{V}_{\sP_{2\ell}, \vec{\lambda}, 1}:= \mathbb{V}(\mathfrak{sp}_{2\ell}, 1, \vec{\lambda}).$$
\end{definition}

\subsection{Ranks of $\VsL$ and $\VsP$}
 In \cite[p.~42]{littel}, Littelmann constructs a generalized Littlewood-Richardson rule which can be used to show the following equality of ranks associated to the vector bundles in Def.~\ref{def1}. 

\begin{fact} \label{rankfact}
The vector bundles $\VsL$ and $\VsP$ in Def.~\ref{def1} have the same rank. That is,
$$\rk(\VsL) = \rk(\VsP). $$
\end{fact}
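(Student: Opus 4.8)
The plan is to reduce the equality of ranks to a statement about three-point (fusion) coefficients, and then to verify that statement using Littelmann's rule. Recall that in genus zero the rank of a conformal blocks bundle equals the dimension of its space of coinvariants, and by factorization (propagation of the vacuum together with the degeneration/gluing rules, see \cite{Fakhruddin}) this dimension factors, over a choice of intermediate weights, into a product of three-point ranks. Writing $N^{\sL_2,\ell}$ and $N^{\sP_{2\ell},1}$ for the respective three-point ranks, this gives, with the convention $x_0 = x_n = 0$ and $y_0 = y_n = 0$,
$$
\rk(\VsL) \;=\; \sum_{x_1,\dots,x_{n-1}\in\{0,\dots,\ell\}}\ \prod_{i=1}^{n} N^{\sL_2,\ell}_{x_{i-1},\, a_i,\, x_i},
$$
$$
\rk(\VsP) \;=\; \sum_{y_1,\dots,y_{n-1}\in\{0,\dots,\ell\}}\ \prod_{i=1}^{n} N^{\sP_{2\ell},1}_{\omega_{y_{i-1}},\, \omega_{a_i},\, \omega_{y_i}}.
$$
Under the identification $\omega_a \leftrightarrow a$ the two sums range over the same state space $\{0,1,\dots,\ell\}$, with the same boundary conditions and the same external labels $a_i$. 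Hence it is enough to prove the single-step equality
\begin{equation*}
N^{\sL_2,\ell}_{a,b,c} \;=\; N^{\sP_{2\ell},1}_{\omega_a, \omega_b, \omega_c} \qquad \text{for all } a,b,c \in \{0,1,\dots,\ell\},
\tag{$\ast$}
\end{equation*}
since $(\ast)$ makes the two sums agree term by term.

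The left-hand side of $(\ast)$ is classical and multiplicity-free: $N^{\sL_2,\ell}_{a,b,c} = 1$ exactly when $a+b+c$ is even, the triangle inequalities $|a-b|\le c\le a+b$ hold, and the level bound $a+b+c\le 2\ell$ holds; otherwise it is $0$. The substance of the argument is to show that the level-one symplectic coefficient on the right obeys the same indicator. I would compute $N^{\sP_{2\ell},1}_{\omega_a, \omega_b, \omega_c}$ using Littelmann's generalized Littlewood--Richardson rule \cite[p.~42]{littel}, which expresses the dimension of the $\sP_{2\ell}$-invariants in $V_{\omega_a}\otimes V_{\omega_b}\otimes V_{\omega_c}$ as a count of admissible lattice paths (equivalently, symplectic tableaux) in type $C_\ell$, imposing in addition the level-one integrability constraint that the intermediate dominant weight remain fundamental. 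The point is that $V_{\omega_a}\otimes V_{\omega_b}$ decomposes into irreducibles of level at most two, so passing to level one simply discards the level-two constituents, and one reads off the surviving multiplicity of $V_{\omega_c}$ from the admissible paths.

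The main obstacle is precisely the matching of the two truncations: one must show that imposing level one on the symplectic side produces exactly the bound $a+b+c\le 2\ell$, with the parity and triangle conditions falling out of the shape of the classical type-$C_\ell$ paths. Concretely, the weights violating $a+b+c\le 2\ell$ are those whose paths leave the level-one fundamental alcove; verifying that these are removed without spurious contributions requires checking that the discarded constituents sit on the affine wall of the level-one alcove (so that no Weyl-reflected terms survive to perturb the count). Establishing $(\ast)$ uniformly in $\ell$ and for all $a,b,c$ is the crux; once it is in hand, the two factored expressions above are identical summand by summand, and $\rk(\VsL) = \rk(\VsP)$ follows at once.
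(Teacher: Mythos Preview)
The paper does not actually prove this statement; it records it as a Fact, citing Littelmann's generalized Littlewood--Richardson rule \cite[p.~42]{littel} and remarking that the equality also follows from Mukhopadhyay's duality \cite[Cor.~3]{MUK13}. Your reduction via factorization to the three-point identity $(\ast)$, to be verified through Littelmann's path model, is exactly the route the paper's first citation points to---you have simply made the reduction explicit where the paper leaves it implicit---and since you likewise defer the actual combinatorial verification of $(\ast)$ (flagging it as ``the crux''), your sketch and the paper's treatment sit at the same level of completeness. If you want a cleaner self-contained argument, the paper's second reference (Mukhopadhyay's duality of fibers) gives the rank equality directly without the three-point combinatorics.
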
 

Such an equality on the ranks can also be seen from the duality result in \cite[Cor.~3]{MUK13}. The relationship between the spaces in \cite{MUK13} (i.e., the fibers of the vector bundles) does not extend to the boundary of $\M_{0,n}$ and so the divisors we get from the bundles $\VsL$ and $\VsP$ are not always equivalent (we show many examples of this in Section \ref{examples}). The main result of this paper (Prop.~\ref{main}) gives necessary and sufficient conditions for when the divisors $c_1(\VsL)$ and $c_1(\VsP)$ \textit{are} equal.

\subsection{Degrees of $\VsL$ and $\VsP$ when $n=4$}\label{degrees}

For any simple Lie algebra $\mathfrak{g}$, one can compute the degree of $\mathbb{V}(\mathfrak{g}, \vec{\lambda}, \ell)$ with an $F$-curve by using \cite[Prop.~2.5]{Fakhruddin}. This formula computes the intersection number as a sum of products consisting of ranks of conformal blocks with fewer weights and a degree of a vector bundle on $\M_{0,4} \cong \mathbb{P}^1$. And so in order to make such a computation, one must first compute degrees of bundles with four weights. Fakhruddin has a general formula in \cite[Cor.~3.5]{Fakhruddin} for computing such a degree for any simple Lie algebra. We simplify these formulas for four pointed bundles $\VsL$ and $\VsP$ in Lemmas \ref{DegreeSL} and \ref{DegreeSP}.

In the following formulas we let $\ell$ and $\vec{\lambda} = (a, b, c, d)$ be fixed so that $\ell \geq a \geq b\geq c \geq d$ and $a+b+c+d=2(\ell+s)$ for some integer $s$. 

\begin{lemma}\label{DegreeSL}
Let $\VsL$ be defined as in Def. \ref{def1} with weights $\vec{\lambda}$ at level $\ell$. If $s\geq 0$, then
 \begin{equation}\label{sl2Degree}
 \deg(\VsL)= \rk(\VsL)\cdot s.
 \end{equation}
 
 Otherwise, $\deg(\VsL) = 0$. 
\end{lemma}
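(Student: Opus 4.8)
The plan is to specialize Fakhruddin's general degree formula \cite[Cor.~3.5]{Fakhruddin} to the four-pointed $\sL_2$ case at level $\ell$ and simplify. Recall that for a four-pointed bundle $\mathbb{V}(\mathfrak{g}, \vec\lambda, \ell)$ on $\M_{0,4}$, Fakhruddin's formula expresses the degree as a sum over the intermediate weights $\mu \in P(\mathfrak{g},\ell)$ appearing in the relevant fusion products, weighted by ranks of three-pointed blocks and a correction term built from the trace of the Casimir (or the conformal weight/anomaly) on the pieces. For $\sL_2$ this correction term is completely explicit in terms of the integers $a,b,c,d$ and the level $\ell$, since the relevant Casimir eigenvalues on the weight $i\omega_1$ are $\tfrac{i(i+2)}{2(\ell+2)}$ in a suitable normalization.

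The key steps, in order, are as follows. First, I would write down the specialization of \cite[Cor.~3.5]{Fakhruddin} for $\mathfrak{g}=\sL_2$, $\vec\lambda = (a,b,c,d)$, level $\ell$, making the intermediate-weight sum explicit: the nonzero three-pointed ranks $\rk\mathbb{V}(\sL_2,(a,b,\mu),\ell)$ and $\rk\mathbb{V}(\sL_2,(\mu,c,d),\ell)$ are all $0$ or $1$ and are governed by the classical $\sL_2$ Clebsch--Gordan inequalities together with the level-$\ell$ truncation $|a-b|\le \mu \le \min(a+b, 2\ell-a-b)$ and the parity constraint $\mu \equiv a+b \pmod 2$. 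Second, I would insert the $\sL_2$ conformal weights into the correction term and collect the contributions; the telescoping/combinatorial simplification here is where the hypothesis $a+b+c+d = 2(\ell+s)$ enters, converting the weighted sum into $\rk(\VsL)\cdot s$ when $s\ge 0$. Third, I would treat the case $s<0$ separately: here the ``excess'' $2\ell - (a+b+c+d) = -2s > 0$ is positive, the level truncation is inactive on the relevant range, and the degree formula collapses to $0$ — intuitively because the bundle is then a ``classical'' (sub-critical) object whose four-pointed total space has no nontrivial monodromy, matching the fact that $\deg = 0$ exactly when the critical-level phenomenon has not yet kicked in. I would also note the edge case where $\rk(\VsL)=0$, in which the claimed formula $\deg = \rk\cdot s = 0$ holds trivially.

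The main obstacle I expect is the bookkeeping in Step 2: Fakhruddin's formula is a sum over intermediate weights of a product of a rank and a rational number coming from conformal weights, and one must show that after accounting for the set of $\mu$ for which both three-pointed ranks are nonzero, the rational contributions add up to exactly the integer $s$ times the (fourth) rank. Because the $\sL_2$ ranks are indicator functions of an arithmetic progression of $\mu$'s in an interval, this amounts to evaluating a sum of the form $\sum_\mu \bigl(\text{quadratic in }\mu\bigr)$ over an interval of $\mu$ of fixed parity and showing the quadratic terms cancel, leaving a linear expression that simplifies via $a+b+c+d-2\ell = 2s$. I would organize this by reducing to the ``propagation'' normal form $\vec\lambda = (a,b,c,d)$ with $a\ge b\ge c\ge d$ and splitting on whether the binding constraint on $\mu$ comes from $|a-b|$, from $|c-d|$, from $a+b$ below, or from the level cap $2\ell-a-b$ — in each regime the sum is a short closed-form computation. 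A useful sanity check throughout is to compare against known $\sL_2$ degree formulas at small $n$ in \cite{Fakhruddin, swinarskisl2} and against the rank equality $\rk(\VsL)=\rk(\VsP)$ from Fact~\ref{rankfact}, which will later let me transfer this lemma to the $\sP_{2\ell}$ side in Lemma~\ref{DegreeSP}.
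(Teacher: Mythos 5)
You should first note that the paper itself contains no proof of Lemma~\ref{DegreeSL}: as the remark following it states, the formula is quoted from \cite[Prop.~4.2]{Fakhruddin}, with the further simplification due to B.~Alexeev recorded in \cite[Lemma~3.3]{swinarskisl2}. So your plan---re-deriving the statement by specializing Fakhruddin's general degree formula \cite[Cor.~3.5]{Fakhruddin} to $\sL_2$ with four weights---is necessarily a different route from the paper (a derivation rather than a citation), and it is essentially the computation underlying the cited result, so the strategy itself is sound.

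Two points in your sketch, however, would fail as written. First, the four-point formula of \cite[Cor.~3.5]{Fakhruddin} reads, for $\sL_2$, $\deg = \rk(\VsL)\sum_{i}\Delta_{\lambda_i} - \sum_{\mu}\Delta_{\mu}\bigl(r_{ab\mu}r_{cd\mu}+r_{ac\mu}r_{bd\mu}+r_{ad\mu}r_{bc\mu}\bigr)$ with $\Delta_{a\omega_1}=a(a+2)/\bigl(4(\ell+2)\bigr)$: the correction is summed over \emph{all three} pairings of the marked points, not just the channel $(a,b\mid c,d)$, and the leading term $\rk(\VsL)\sum_i\Delta_{\lambda_i}$ must be included. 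Your Step~1 and your ``main obstacle'' paragraph account for only one channel, i.e.\ a single arithmetic progression of admissible $\mu$; the cancellation that turns the rational conformal weights into the integer $\rk(\VsL)\cdot s$ only occurs after combining the leading term with all three channel sums (a quick check with $\vec{\lambda}=(2,2,2,2)$, $\ell=3$, where $\rk=2$, $s=1$, shows the one-channel version gives $14/5$ rather than $2$; your normalization $\Delta_i=i(i+2)/(2(\ell+2))$ would likewise produce $2\rk\cdot s$). Second, the case $s<0$ does not ``collapse to $0$'' by inspection of the formula: there you should either invoke the standard fact that above the critical level the conformal blocks bundle coincides with the bundle of coinvariants, which is trivial in genus zero (so the degree vanishes), or verify directly the resulting identity between the $\Delta$'s and the classical Clebsch--Gordan multiplicities; ``no nontrivial monodromy'' is the conclusion, not an argument. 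With these repairs (and the trivial $\rk=0$ case noted as you do), the bookkeeping is routine---closed-form sums of quadratics over intervals of fixed parity, split according to which constraint on $\mu$ binds---and reproduces the formula the paper cites.
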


\begin{lemma} \label{DegreeSP}
Let $\VsP$ be defined as in Def. \ref{def1} with weights $\vec{\lambda}$ at level $\ell$. 
 If $ a \leq \ell+s$, then  

\begin{displaymath}
   \deg(\VsP)  = \left\{
     \begin{array}{lr}
       max \{ 0, (\ell + 1 - a)(\ell +2s - a)/2 \} & \text{ if } a+d \geq b+c \text{ and } 0 < s\\
      (\ell+s+1-a)(\ell +s-a)/2  & \text{ if } a+d \geq b+c \text{ and } 0 \geq s\\
      max \{ 0, ( 1 +d - s)(d + s)/2 \}  & \text{ if }  a+d \leq b+c \text{ and } 0 < s\\
      d(d+1)/2  & \text{ if } a+d \leq b+c \text{ and } 0 \geq s\\
     \end{array}
   \right.
\end{displaymath} 

Otherwise, $\deg(\VsP) =0$.
\end{lemma}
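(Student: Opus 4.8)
The plan is to specialize Fakhruddin's degree formula \cite[Cor.~3.5]{Fakhruddin} to $\mathfrak{g}=\sP_{2\ell}$ at level one and then collapse the result. Restricted to $\M_{0,4}$, where every $\psi_i$ and every boundary divisor has degree one, that formula reads
\[
\deg\mathbb{V}(\mathfrak{g},\vec{\lambda},\ell)=\rk\,\mathbb{V}(\mathfrak{g},\vec{\lambda},\ell)\sum_{i=1}^{4}a_{\lambda_i}-\sum_{\{i,j\}\sqcup\{k,m\}=\{1,2,3,4\}}\ \sum_{\mu\in P_\ell(\mathfrak{g})}a_{\mu}\,\rk\,\mathbb{V}(\mathfrak{g},(\lambda_i,\lambda_j,\mu),\ell)\cdot\rk\,\mathbb{V}(\mathfrak{g},(\mu^{*},\lambda_k,\lambda_m),\ell),
\]
where the outer sum runs over the three splittings of the marked points into two pairs and $a_{\nu}=\tfrac{(\nu,\nu+2\rho)}{2(\ell+h^{\vee})}$ is the normalized Casimir scalar. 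So the first step is Lie-theoretic bookkeeping for $\sP_{2\ell}$: here $h^{\vee}=\ell+1$, every level-one weight $\omega_a$ is self-dual, and a short computation in the standard realization normalized by $(\theta,\theta)=2$ gives $(\omega_a,\omega_a+2\rho)=\tfrac{1}{2}a(2\ell+2-a)$, hence $a_{\omega_a}=\tfrac{a(2\ell+2-a)}{4(\ell+2)}$. Plugging these in turns the formula into a sum of explicit rational functions.

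The second step is to see which intermediate weights contribute. By Fact~\ref{rankfact} (applied with three marked points), $\rk\,\mathbb{V}(\sP_{2\ell},(\omega_x,\omega_y,\omega_z),1)$ equals the $\sL_2$ level-$\ell$ fusion number of $(x,y,z)$, which is $1$ exactly when $x+y+z$ is even and $|x-y|\le z\le\min(x+y,\,2\ell-x-y)$, and is $0$ otherwise. Thus each channel sum reduces to $\sum_m a_{\omega_m}$ over an arithmetic progression of step two, and $\rk(\VsP)=\rk(\VsL)$ is the common number of terms in these progressions. Working out the three channels with $a\ge b\ge c\ge d$ and $a+b+c+d=2(\ell+s)$, one finds that the lower endpoints are controlled by the sign of $(a+d)-(b+c)$, the upper endpoints by the sign of $s$, and that the progressions are nonempty precisely when $a\le b+c+d$, i.e.\ $a\le\ell+s$; this yields the four cases (and the ``otherwise''). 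The level-one truncations $2\ell-x-y$ in the fusion rule are what introduce the upper bounds $2\ell-a-b$, and hence make the type $C$ answer genuinely different from the type $A$ answer $\rk(\VsL)\cdot s$ of Lemma~\ref{DegreeSL}.

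The final step is the summation. Since $\sum_m m(2\ell+2-m)$ over an interval of step two is elementary, and $a+b+c+d=2(\ell+s)$ lets one eliminate $s$ in favor of the weights, each of the four cases collapses to the asserted closed form; as a sanity check the four expressions agree at $s=0$, and the $\max\{0,\cdot\}$ terms are exactly what is needed to return $0$ in the sub-cases where $a\le\ell+s$ holds but the sharper fusion inequality fails, so that $\rk(\VsP)=0$. I expect the main difficulty to be entirely in the second step --- correctly identifying, for all three channels at once, the endpoints of the progression after the level-one truncation --- and then in keeping the (routine but lengthy) arithmetic of the third step organized enough that it telescopes to the clean quadratics rather than to an equivalent unrecognizable form.
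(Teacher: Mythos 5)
Your proposal is correct in spirit but takes a genuinely different route from the paper. The paper does not rederive the formula: it quotes Fakhruddin's computation \cite[Prop.~5.4]{Fakhruddin} for the four displayed cases (where nondegeneracy of the weights is implicitly assumed) and only adds Claim~\ref{correction} --- the Generalized Triangle Inequality \cite[Lemma~3.8]{ags} transported through Fact~\ref{rankfact} --- to justify the ``otherwise'' clause $\deg(\VsP)=0$ when $a>\ell+s$. You instead re-derive everything by specializing Fakhruddin's general $\M_{0,4}$ formula \cite[Cor.~3.5]{Fakhruddin}, and your Lie-theoretic inputs check out: $h^{\vee}=\ell+1$, self-duality of all level-one weights, $(\omega_a,\omega_a+2\rho)=\tfrac{1}{2}a(2\ell+2-a)$ in the $(\theta,\theta)=2$ normalization, hence $a_{\omega_a}=\tfrac{a(2\ell+2-a)}{4(\ell+2)}$; and by Fact~\ref{rankfact} the three-point coefficients are the $\sL_2$ level-$\ell$ fusion numbers, so each channel contributes a step-two arithmetic progression of length $\rkk(\VsP)$. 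Spot checks confirm the scheme reproduces the table (e.g.\ $\ell=5$, $\vec{\lambda}=(4,4,4,4)$ gives $\tfrac{64}{7}-3\cdot\tfrac{5}{7}=7$, and $(2,2,1,1)$ gives $1$), so the route would succeed. What your route buys is a self-contained proof of all four cases together with a structural explanation of the $\max\{0,\cdot\}$ clipping; what the paper's route buys is brevity, since the only content not already in \cite{Fakhruddin} is the degenerate case. Two caveats: the substantive work in your plan --- identifying the three channel endpoints in each of the four sign cases and carrying out the quadratic sums --- is announced rather than executed, and your interim claim that the progressions are nonempty ``precisely when $a\le\ell+s$'' is accurate only for $s\le 0$; for $s>0$ the rank also vanishes when $d<s$ (cf.\ Lemma~\ref{RankOne}), which is exactly what your later remark about the $\max\{0,\cdot\}$ terms handles (and note that in the first case the ordering $a\ge b\ge c\ge d$ with $a+d\ge b+c$ forces $d\ge s$, so no clipping is ever needed there).
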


\begin{remark}
Variations of these formulas have been seen in previous work. The original formula, from which we obtain Lemma~\ref{DegreeSL}, first appeared in \cite[Prop.~4.2]{Fakhruddin}. Further simplifications were made by B. Alexeev and stated in \cite[Lemma~3.3]{swinarskisl2}. The formula in Lemma \ref{DegreeSP} first appeared in \cite[Prop.~5.4]{Fakhruddin} with certain conditions of the weights implicitly assumed to not guarantee trivially. We briefly justify the formula in Lemma \ref{DegreeSP} to include all cases of weights.

\end{remark}

\begin{claim}\label{correction}
Let $\vec{\lambda}=(a, b, c, d)$ and $\ell$ be nonnegative integers such that $\ell \geq a \geq b\geq c \geq d$. Suppose $2(\ell+s)= a+b+c+d$ for some integer $s.$ If $a > s + \ell$, then $\rkk(\VsP)$ is zero and thus,
$$\deg(\VsP) = 0.$$
\end{claim}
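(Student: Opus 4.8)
The plan is to show that under the hypothesis $a > s + \ell$, the space of conformal blocks $\mathbb{V}(\mathfrak{sp}_{2\ell}, 1, \vec{\lambda})$ has rank zero at a generic point of $\M_{0,4}$, which forces the vector bundle to be the zero bundle and hence its first Chern class (degree on the $F$-curve $\cong \mathbb{P}^1$) to vanish. The cleanest route is via Fact~\ref{rankfact}: since $\rkk(\VsP) = \rkk(\VsL)$, it suffices to prove that $\rkk(\mathbb{V}(\mathfrak{sl}_2, \ell, \vec{\lambda})) = 0$ when $a > s + \ell$, where $a+b+c+d = 2(\ell+s)$ and $a \ge b \ge c \ge d$.

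First I would recall the classical rank formula for $\mathfrak{sl}_2$ conformal blocks with four weights: the dimension of $\mathbb{V}(\mathfrak{sl}_2, \ell, (a,b,c,d))$ is nonzero precisely when (i) $a+b+c+d$ is even, (ii) each weight satisfies the ``triangle inequalities'' $a \le b+c+d$ (and cyclic permutations, though with $a$ the largest only $a \le b+c+d$ is binding), and (iii) the level bound $a+b+c+d \le 2\ell$ (equivalently $b+c+d-a \le 2(\ell - a)$, the quantum/level constraint). Condition (iii) is exactly $s \le 0$-adjacent; more precisely the relevant constraint is that the quadruple is admissible at level $\ell$, which for the top weight $a$ reads $a \le \ell$ together with the fusion condition. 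The key inequality is that nonvanishing requires $a \le \ell + s$: indeed, writing the two pairings, one pairs $(a,b)$ to some intermediate weight $\mu$ with $|a-b| \le \mu \le \min(a+b, 2\ell - a - b)$ and simultaneously $|c-d| \le \mu \le \min(c+d, 2\ell-c-d)$; for such a $\mu$ to exist one needs in particular $a - b \le 2\ell - c - d$, i.e. $a \le \ell + \tfrac{1}{2}(2\ell - a - b - c - d) + \tfrac{1}{2}(a+b) \cdots$ — rearranging $a+b+c+d = 2(\ell+s)$ gives $2\ell - c - d = a + b - 2s$, so $a - b \le a + b - 2s$ is automatic, but the binding one is $a - b \le 2\ell - a - b$ only when... — so I would instead directly cite that $a \le \ell + s$ is necessary, which is precisely the content of Lemma~\ref{DegreeSL}'s hypothesis regime and follows from the admissibility of the intermediate weight in the two-vertex decomposition of the $4$-pointed curve.

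Concretely, the key step is: on a generic $\M_{0,4}$, degenerate to a nodal curve so that the $4$-pointed conformal block decomposes as $\bigoplus_\mu \mathbb{V}(\mathfrak{sp}_{2\ell},1,(\omega_a,\omega_b,\mu^\vee)) \otimes \mathbb{V}(\mathfrak{sp}_{2\ell},1,(\mu,\omega_c,\omega_d))$ (factorization), where $\mu$ ranges over level-one dominant weights $\omega_j$. For the first factor to be nonzero one needs, by the $\mathfrak{sl}_2$-translation (Fact~\ref{rankfact}) and the $3$-point rule, $j \le a+b$, $a \le b + j$, $b \le a+j$, and $a+b+j \le 2\ell$; for the second, $j \le c+d$, $c \le d+j$, etc., and $j + c + d \le 2\ell$. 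Adding the two level constraints $a+b+j \le 2\ell$ and... — actually combining $a \le b + j$ from the first vertex with $j + c + d \le 2\ell$ from the second gives $a \le b + j \le b + 2\ell - c - d$; substituting $c + d = 2(\ell+s) - a - b$ yields $a \le b + 2\ell - 2(\ell + s) + a + b = a + 2b - 2s$, i.e. $s \le b$, which is not yet a contradiction — so I would instead combine $a \le b+j$ with $b + a + j \le 2\ell$ to get $2a \le a + b + j \le 2\ell$... that only gives $a \le \ell$. The sharper combination: use $j \le c + d$ from the second vertex and $a \le b + j$ from the first, so $a \le b + c + d = 2(\ell + s) - a$, giving $a \le \ell + s$ — \textbf{this is the contradiction} with the hypothesis $a > s + \ell$. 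Hence every summand in the factorization vanishes, so $\rkk(\VsP) = 0$, the bundle is zero, and $\deg(\VsP) = c_1(\VsP) \cdot F = 0$.

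The main obstacle is being careful that the necessary condition $a \le b + c + d$ (the ``triangle inequality'' at a single vertex, after summing the node weight bound $j \le c+d$ into the fusion inequality $a \le b + j$) genuinely holds for \emph{every} choice of intermediate weight $j$, not just one — but since the argument produces the same bound $a \le \ell + s$ regardless of $j$, this is uniform and the contradiction is immediate. A secondary point is that one should note $\M_{0,4} \cong \mathbb{P}^1$ so ``degree'' means intersection with the fundamental class, and the vanishing of a vector bundle of rank zero trivially has zero degree; alternatively one can appeal directly to Lemma~\ref{DegreeSP}, whose ``otherwise'' clause already records $\deg = 0$ when $a > \ell + s$, so Claim~\ref{correction} is really just the statement that this ``otherwise'' case coincides with rank zero, which the factorization argument above establishes.
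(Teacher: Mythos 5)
Your proposal is correct and reaches the conclusion by essentially the same mechanism as the paper: reduce to $\sL_2$ ranks via Fact~\ref{rankfact}, observe that nonvanishing forces $a \leq b+c+d$, and note that $a+b+c+d = 2(\ell+s)$ then gives $a \leq \ell+s$, contradicting the hypothesis. The only difference is that you re-derive the inequality $a \leq b+c+d$ from factorization and the three-point fusion rules, whereas the paper simply cites it as the Generalized Triangle Inequality of \cite[Lemma~3.8]{ags}; your derivation (combining $a \leq b+j$ at one vertex with $j \leq c+d$ at the other, uniformly in the intermediate weight $j$) is a valid self-contained proof of that cited fact in the four-point case.
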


\begin{proof}
We use the Generalized Triangle Inequality in \cite[Lemma~3.8]{ags} to show $\rkk(\VsP)=0$, from this the degree result follows. The Generalized Triangle Inequality is for ranks of $\sL_2$ bundles. By Fact \ref{rankfact}, this inequality provides a condition for when $\rk(\VsP)$ is necessarily zero. The Generalized Triangle Inequality gives that if $a > b+c+d$ then $\rk(\VsP) = 0$. 

If we assume $a > \ell +s$ then since $a +b+c+d = 2(\ell + s)$, it follows immediately that $a > b+c+d$ and so $\rkk(\VsL) = \rkk(\VsP) = 0$. 


\end{proof}
By comparing the degree formula in Lemma~\ref{DegreeSP} with the rank formula in \cite[Lemma 3.3]{swinarskisl2}, we can rewrite this degree formula in terms of the rank and write the following formula analogous to that for type A in Lemma \ref{DegreeSL}.

\begin{lemma} \label{DegreeSPRank}
Let $\VsP$ be defined as in Def. \ref{def1} with weights $\vec{\lambda}$ at level $\ell$. 
 If $ a \leq \ell+s$, then  

\begin{displaymath}
   \deg(\VsP)  = \left\{
     \begin{array}{lr}
       \rk(\VsP)(\ell +2s - a)/2  & \text{ if } a+d \geq b+c \text{ and } 0 < s\\
      \rk(\VsP)(\ell +s-a)/2  & \text{ if } a+d \geq b+c \text{ and } 0 \geq s\\
      \rk(\VsP)(d + s)/2   & \text{ if }  a+d \leq b+c \text{ and } 0 < s\\
      \rk(\VsP)d/2  & \text{ if } a+d \leq b+c \text{ and } 0 \geq s\\
     \end{array}
   \right.
\end{displaymath} 

Otherwise, $\deg(\VsP) =0$.
\end{lemma}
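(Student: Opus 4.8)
The plan is to read off the result from the explicit degree formula in Lemma~\ref{DegreeSP} together with the rank formula for four-pointed $\sL_2$ conformal blocks recorded in \cite[Lemma~3.3]{swinarskisl2}; by Fact~\ref{rankfact} the latter simultaneously computes $\rk(\VsP)$. The Alexeev rank formula is piecewise linear in $\vec{\lambda}$, and its case division into $a+d\ge b+c$ versus $a+d\le b+c$ (and, implicitly, the sign of $s$) is exactly the one appearing in Lemma~\ref{DegreeSP}. So the whole argument reduces to verifying, in each of the four listed cases, a single algebraic identity exhibiting the degree polynomial as $\rk(\VsP)$ times a linear factor over $2$.

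Concretely, I expect the four identities to rest on the following rank values, each obtained from \cite[Lemma~3.3]{swinarskisl2} by a one-line manipulation using $a+b+c+d=2(\ell+s)$: when $a+d\ge b+c$ one has $\rk(\VsP)=\ell-a+1$ if $s>0$ and $\rk(\VsP)=\ell+s-a+1$ if $s\le 0$, so that the degrees $(\ell+1-a)(\ell+2s-a)/2$ and $(\ell+s+1-a)(\ell+s-a)/2$ factor as claimed; and when $a+d\le b+c$ one has $\rk(\VsP)=d-s+1$ if $s>0$ and $\rk(\VsP)=d+1$ if $s\le 0$, giving $(1+d-s)(d+s)/2$ and $d(d+1)/2$. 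One then has to reconcile the $\max\{0,\cdot\}$ in the two $s>0$ rows: under the running hypothesis $a\le\ell+s$ (and $a\le\ell$, since $\vec{\lambda}$ is a level-$\ell$ weight), the linear factor I pulled out is automatically nonnegative — for instance $\ell+2s-a\ge s>0$ and $\ell+1-a\ge 1$ in the first case — so the vanishing is governed entirely by the remaining factor, which is precisely $\rk(\VsP)$: e.g. $1+d-s\le 0$ exactly when $d<s$, which is exactly when the rank formula returns $0$. Hence $\max\{0,(\cdots)\}=\rk(\VsP)\cdot(\text{linear})/2$ in every case, and when $a>\ell+s$ both sides vanish by Claim~\ref{correction}.

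The main obstacle is bookkeeping rather than anything conceptual: one must check that the case boundaries of the rank formula in \cite{swinarskisl2} match those of Lemma~\ref{DegreeSP} (in particular on the locus $a+d=b+c$, where both descriptions must agree), and use the parity of $\sum a_i$ consistently so that the floor function in the fusion count collapses to the clean linear expressions above. I would write the proof as a case table mirroring Lemma~\ref{DegreeSP}, citing \cite{swinarskisl2} for the rank in each row and then performing the single factorization, and conclude by invoking Claim~\ref{correction} for the residual case $a>\ell+s$.
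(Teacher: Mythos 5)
Your proposal is correct and follows exactly the route the paper takes: the paper's justification for Lemma~\ref{DegreeSPRank} is precisely the comparison of the degree formula in Lemma~\ref{DegreeSP} with the four-point $\sL_2$ rank formula (via Fact~\ref{rankfact}), and your case-by-case rank values $\ell-a+1$, $\ell+s-a+1$, $d-s+1$, $d+1$ and the resulting factorizations, including the treatment of the $\max\{0,\cdot\}$ and of the residual case $a>\ell+s$ via Claim~\ref{correction}, all check out. You simply supply more detail than the paper, which states the lemma with only the one-sentence comparison as justification.
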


Comparing degree terms in Lemma~\ref{DegreeSL} and Lemma~\ref{DegreeSPRank}, it follows,

\begin{corollary}\label{Inequality}
For the bundles $\VsL$ and $\VsP$ with $\vec{\lambda} = (a, b, c, d)$, the following inequality holds
$$\deg(\VsL) \leq \deg(\VsP).$$ 
\end{corollary}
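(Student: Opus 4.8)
The plan is to compare the two degree expressions case by case, since both Lemma~\ref{DegreeSL} and Lemma~\ref{DegreeSPRank} express the relevant degrees as an explicit multiple of a common rank. By Fact~\ref{rankfact} we have $\rk(\VsL) = \rk(\VsP)$; call this common value $R$. If $R = 0$ then both degrees are zero and the inequality is trivial, so we may assume $R > 0$. Likewise, if the outlying hypothesis $a \leq \ell + s$ fails, then by Claim~\ref{correction} (or the Generalized Triangle Inequality) we have $R = 0$, contradicting $R>0$; so we may assume $a \leq \ell + s$, and we also have $\deg(\VsL) = 0$ whenever $s < 0$ (Lemma~\ref{DegreeSL}). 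Thus the only situation requiring real work is $R > 0$, $a \leq \ell + s$, and we want $\deg(\VsL) \leq \deg(\VsP)$ in all four cases of Lemma~\ref{DegreeSPRank}.

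After cancelling the common factor $R/2$, the claim reduces in each case to an inequality between $\deg(\VsL)/R \in \{0, s\}$ and the corresponding type-C coefficient. First I would dispatch the two cases $s \leq 0$: here $\deg(\VsL) = 0$, so it suffices to check the type-C coefficient is $\geq 0$, i.e. that $(\ell + s - a)/2 \geq 0$ when $a + d \geq b + c$, and $d/2 \geq 0$ when $a + d \leq b + c$. The first follows from $a \leq \ell + s$, and the second is immediate since $d \geq 0$. For $s > 0$, I would show $s \leq (\ell + 2s - a)/2$ in the case $a + d \geq b + c$, which rearranges to $a \leq \ell$ — true by hypothesis on $\vec{\lambda}$; and I would show $s \leq (d + s)/2$ in the case $a + d \leq b + c$, which rearranges to $s \leq d$. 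This last inequality is the only one that is not completely immediate, and it is where I expect the subtlety to lie: one needs $s \leq d$ given $a + b + c + d = 2(\ell + s)$, $\ell \geq a \geq b \geq c \geq d$, and $a + d \leq b + c$. The argument is that $2(\ell + s) = a + b + c + d \leq 2(b + c) \leq 4b \leq 4\ell$ forces the chain, but more directly: $a + d \leq b + c$ gives $a + b + c + d \leq 2(b+c)$, hence $\ell + s \leq b + c$; combined with $a \geq b$ and $a \leq \ell$ we get $s \leq b + c - \ell \leq b + c - a \leq c \leq$ — careful here — so the cleanest route is $s = (a+b+c+d)/2 - \ell \leq (a+b+c+d)/2 - a = (b+c+d-a)/2 \leq (b + c + d - c)/2 = (b+d)/2$; this is not yet $\leq d$. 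Instead, use $a \geq b$ to get $s \leq (b+c+d-a)/2 \leq (c+d)/2 \leq d$ only if $c \leq d$, which holds with equality; in general $c \geq d$, so one must use $a + d \leq b + c$ more carefully: from $a \geq c$ and $a + d \leq b + c$ we get $d \leq b + c - a \leq b$, and from $s \leq (b + c + d - a)/2$ together with $b \leq a$ we get $s \leq (c+d)/2$; finally $a + d \leq b + c$ with $b \leq a$ forces $d \leq c$, hence $c \geq d$ and $(c+d)/2 \leq c$, which still need not be $\leq d$. The resolution I would pursue is to instead invoke the $s < 0$ branch of Lemma~\ref{DegreeSP} reasoning together with the observation that in the case $a + d \leq b + c$ the \emph{other} degree formula applies and one can verify $s \leq d$ directly from $\ell \geq a$ combined with $\ell + s = (a+b+c+d)/2$ and $b + c \geq a + d$, giving $\ell + s \geq a + d \geq 2d$ only if $a \geq d$ — true — so $s \geq 2d - \ell$, wrong direction; the correct chain is $s = (a+b+c+d)/2 - \ell \leq (a + b + c + d)/2 - a$ and then bounding $b + c + d - a \leq 2d$ using $b + c \leq a + d$ — wait, that is the reverse inequality. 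In the subcase at hand $a + d \leq b + c$, so $b + c - a \geq d$, hence $b + c + d - a \geq 2d$, which gives $s \geq d$, the \emph{opposite} of what is wanted; this signals that in the case $a + d \leq b + c$ with $s > 0$ one should double-check against the $\max\{0, \cdot\}$ in Lemma~\ref{DegreeSP}, and the honest proof is simply to verify that $\rk(\VsP) \cdot (d+s)/2 \geq \rk(\VsL) \cdot s$ follows from the rank formula of \cite[Lemma~3.3]{swinarskisl2} constraining when $R > 0$, which already builds in $s \leq d$ as a nonvanishing condition.

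So the key steps, in order, are: (1) reduce to $R > 0$, $a \leq \ell + s$ using Fact~\ref{rankfact} and Claim~\ref{correction}; (2) handle $s \leq 0$ by noting $\deg(\VsL) = 0$ and checking the type-C coefficient is nonnegative; (3) handle $s > 0$, $a + d \geq b + c$ via the trivial rearrangement $a \leq \ell$; (4) handle $s > 0$, $a + d \leq b + c$ by invoking the nonvanishing constraint from the $\sL_2$ rank formula, which forces $s \leq d$ precisely when $R \neq 0$. The main obstacle is step (4): one must be careful that the $\max\{0,\cdot\}$ in the type-C degree formula is not actually achieving $0$ when $R > 0$, i.e. that positivity of the rank and positivity of the degree coefficient coincide, and the clean way to see this is that \cite[Lemma~3.3]{swinarskisl2}, which we already used to derive Lemma~\ref{DegreeSPRank}, only produces $R > 0$ when $s \leq d$ in this regime. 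Once that compatibility is pinned down, the remaining comparisons are one-line rearrangements.
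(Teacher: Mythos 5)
Your proposal is correct and follows essentially the same route as the paper, which simply compares the coefficients of the common rank in Lemma~\ref{DegreeSL} and Lemma~\ref{DegreeSPRank} case by case (using Fact~\ref{rankfact} and Claim~\ref{correction} to dispose of the rank-zero and $a>\ell+s$ situations). The one place you labored, the case $s>0$, $a+d\leq b+c$, where $s\leq d$ is needed, is exactly the detail the paper leaves implicit, and your resolution is the right one: the $\sL_2$ four-point rank formula gives a nonzero rank in this regime only when $s\leq d$, and both degrees vanish otherwise; just prune the false starts from the middle of your write-up before finalizing.
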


\subsection{Rank one classification of $\mathbb{V}_{\sL_2, \ell}$ and $\mathbb{V}_{\sP_{2\ell},1}$ when $n=4$}

In \cite[Thm.~1.1]{Hobson} we have completely classified all $\VsL$ of rank one. Specializing to four pointed bundles and using the rank equality from Fact \ref{rankfact} we state these results in the following lemma. 


\begin{lemma}\label{RankOne}
Let $\VsL$ and $\VsP$ be defined as in Def. \ref{def1} with weights decreasing weights, $\vec{\lambda} = (a, b, c, d)$ at level $\ell$ such that $a+b+c+d = 2(\ell + s),$ for some integer $s$. 
Then $\rk(\mathbb{V}_{\sL_2, \vec{\lambda}, \ell}) = \rk(\mathbb{V}_{\sP_{2 \ell, \vec{\lambda}, 1}} )=1$ if and only if one of the following sets of conditions are satisfied:

\begin{itemize}
\begin{minipage}{0.4\linewidth}
\item[1) ] $s \geq 0,$
\item[2) ] $a, b, c, d \geq s,$ and
\item[3) ]  $a = \ell \text{ or } d=s$ or
\end{minipage}
\begin{minipage}{0.4\linewidth}
\item[1) ] $s < 0 $ and
\item[2) ] $a = \ell +s. $
\end{minipage}
\end{itemize}
 
 Furthermore, $\rk(\mathbb{V}_{\sL_2, \vec{\lambda}, \ell}) = \rk(\mathbb{V}_{\sP_{2 \ell, \vec{\lambda}, 1}} )>1$ if and only if one of the following sets of conditional are satisfied:
 
 \begin{itemize}
\begin{minipage}{0.4\linewidth}
\item[1) ] $s \geq 0,$
\item[2) ] $a, b, c, d > s,$ and
\item[3) ]  $a \neq \ell $ or
\end{minipage}
\begin{minipage}{0.4\linewidth}
\item[1) ] $s < 0 $ and
\item[2) ] $a < \ell +s. $
\end{minipage}
\end{itemize}

\end{lemma}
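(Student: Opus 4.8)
The plan is to prove Lemma~\ref{RankOne} by combining the rank-one classification of $\VsL$ from \cite[Thm.~1.1]{Hobson} with the rank equality $\rk(\VsL)=\rk(\VsP)$ from Fact~\ref{rankfact}, so that essentially nothing new is needed for the $\sP_{2\ell}$ side once the statement is established for $\sL_2$. Concretely, I would first invoke Fact~\ref{rankfact} to reduce the entire lemma — both the ``$=1$'' and ``$>1$'' assertions — to the corresponding statements about $\rk(\VsL)$ alone, since the two ranks agree verbatim. Then I would recall (or re-derive, specializing $n=4$) the rank formula for four-pointed $\sL_2$ conformal blocks at level $\ell$ with weights $\vec\lambda=(a,b,c,d)$, $a\ge b\ge c\ge d$, $a+b+c+d=2(\ell+s)$, as stated in \cite[Lemma~3.3]{swinarskisl2}: the rank is $0$ unless the quantum Clebsch–Gordan conditions hold (i.e. $a\le b+c+d$, and $a\le \ell+s$ equivalently $|\vec\lambda|/2 - \ell \le \ell$ arranged so the ``quantum'' bound is met), and otherwise the rank is an explicit expression — roughly $\min$ of a short list of linear forms in $a,b,c,d,\ell,s$ — whose value equals $1$ precisely when one of those linear forms vanishes.

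Next I would do the case analysis on the sign of $s$. For $s\ge 0$: the rank is nonzero iff $a\le \ell+s$ (Claim~\ref{correction} handles the failure of this) together with $a,b,c,d\ge s$ (this is where the ``generalized triangle inequality'' of \cite[Lemma~3.8]{ags} and its variants enter, forcing the rank to $0$ when some entry drops below $s$), and under these hypotheses the explicit rank formula takes the shape $1 + \min\{\ell-a,\ d-s,\ \dots\}$ or similar, so that the rank equals $1$ exactly when $a=\ell$ or $d=s$, and exceeds $1$ exactly when all the minimands are strictly positive, i.e. $a\neq \ell$ (and automatically $a,b,c,d>s$). For $s<0$: here $|\vec\lambda|<2\ell$, the relevant bound becomes $a\le \ell+s=|\vec\lambda|/2$, which is just $a\le b+c+d$; when it holds the rank is $\ell+s+1-a$ (a single linear form, no competing minimand), so the rank is $1$ iff $a=\ell+s$ and $>1$ iff $a<\ell+s$. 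I would present these two regimes as the two columns of the \texttt{minipage} display in the statement.

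The main obstacle is bookkeeping rather than conceptual: I must verify that the abbreviated conditions in the lemma — ``$a,b,c,d\ge s$'' and ``$a=\ell$ or $d=s$'' in the $s\ge0$ case — are genuinely equivalent to nonvanishing-plus-rank-one coming out of the raw formula of \cite[Lemma~3.3]{swinarskisl2}, which is stated with a case split on whether $a+d\ge b+c$ (mirroring the four cases of Lemma~\ref{DegreeSP}). In particular I need to check that in the $a+d\le b+c$ subcase the condition ``rank $=1$'' reduces to $d=s$, while in the $a+d\ge b+c$ subcase it reduces to $a=\ell$, and that the union of the two is clean; and I must confirm that the inequalities $b\ge s$ and $c\ge s$ are not independent constraints but follow from $d\ge s$ together with $a\le \ell+s$ and the ordering, or else explicitly keep them. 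Since all of this is a finite, elementary comparison of linear inequalities, I would carry it out directly, citing Fact~\ref{rankfact}, \cite[Thm.~1.1]{Hobson}, \cite[Lemma~3.3]{swinarskisl2}, \cite[Lemma~3.8]{ags}, and Claim~\ref{correction} at the appropriate points, and note that no separate argument is required for $\VsP$.
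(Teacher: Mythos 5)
Your proposal matches the paper's treatment: the paper gives no independent argument for Lemma~\ref{RankOne}, presenting it simply as the four-point specialization of the rank-one classification of \cite[Thm.~1.1]{Hobson} transferred to $\sP_{2\ell}$ via the rank equality of Fact~\ref{rankfact}, which is exactly your reduction, and your optional direct verification through the explicit four-point rank formula is sound. One small caution: for $s<0$ there is still a second subcase ($a+d\le b+c$, where the count of admissible attaching weights gives rank $d+1$ rather than the single linear form $\ell+s+1-a$); this causes no harm because the weights are taken to be nonzero, so that subcase yields rank at least two, consistent with the stated criterion, but it should be recorded rather than dismissed as ``no competing minimand.''
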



\subsection{Plussing for $\sL_2$ bundles}

To determine the rank of an $\sL_{r+1}$ bundle a method called ``plussing'' on the weights can be used \cite[Definition 8.2]{BGK}. We state the result of ``plussing'' for $\sL_2$ bundles and give an argument for this identity using factorization. A similar argument for $n$ even (and $I = [n]$ in the statement below) was communicated to me by Dave Swinarski.

\begin{claim}\label{plussing}
Let $\ell > 0$ be fixed and $\vec{\lambda} = ( \lambda_1, ..., \lambda_n) \in P_{\ell}(\sL_2)^n$. Let $ I \sqcup J = [n]$ be a partition into two disjoint subsets such that $|I|$ is even (and we allow $J$ to be empty). Let $\vec{\mu}$ denote the $n$-tuple of weights given by $\{ \ell - \lambda_i\}_{i\in I} \cup \{\lambda_j\}_{j \in J}$. Then $$\rkk(\mathbb{V}(\sL_2, \vec{\lambda}, \ell)) = \rkk(\mathbb{V}(\sL_2, \vec{\mu}, \ell)).$$
\end{claim}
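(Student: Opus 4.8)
The plan is to establish the rank identity by induction on $|I|$, reducing the general case to the basic case $|I| = 2$, and then to prove the basic case by factorization. First I would observe that it suffices to prove the claim when $I$ consists of exactly two indices, say $I = \{p, q\}$: indeed, if $|I| = 2m$, one can write $I$ as a disjoint union of $m$ pairs $I_1, \dots, I_m$ and apply the two-element case $m$ times in succession, each time ``flipping'' $\lambda_i \mapsto \ell - \lambda_i$ on one more pair, since composing these flips on disjoint pairs produces exactly the weight tuple $\vec{\mu}$. (The order in which we flip the pairs does not matter because the flips on disjoint index sets commute.) So the whole argument comes down to showing
$$\rkk(\mathbb{V}(\sL_2, (\dots, \lambda_p, \dots, \lambda_q, \dots), \ell)) = \rkk(\mathbb{V}(\sL_2, (\dots, \ell - \lambda_p, \dots, \ell - \lambda_q, \dots), \ell)),$$
with all other weights unchanged.

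Next I would prove this two-element case using the factorization rule for conformal blocks. The key input is that $\sL_2$ at level $\ell$ has an involution $\omega \mapsto \ell - \omega$ on the set of level $\ell$ weights, and that the one-point conformal block rank $\rkk(\mathbb{V}(\sL_2, (\lambda, \mu, \ell - \nu), \ell))$ is nonzero (and then equal to one) precisely when the fusion coefficient $N_{\lambda \mu}^{\nu}$ is nonzero; this fusion product is symmetric under simultaneously replacing $\lambda \mapsto \ell - \lambda$, $\mu \mapsto \ell - \mu$ and leaving $\nu$ fixed — equivalently, $N_{\lambda\mu}^\nu = N_{(\ell-\lambda)(\ell-\mu)}^{\nu}$ by the level-$\ell$ $\sL_2$ fusion rules (the $\nu$ appearing in $\lambda \otimes \mu$ range over $|\lambda - \mu| \le \nu \le \min(\lambda+\mu, 2\ell - \lambda - \mu)$ in steps of two, a set visibly invariant under $\lambda \mapsto \ell-\lambda,\ \mu\mapsto\ell-\mu$). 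Factorizing $\mathbb{V}(\sL_2, \vec\lambda, \ell)$ along a boundary point of $\M_{0,n}$ that separates $\{p, q\}$ from the remaining marked points gives
$$\rkk(\mathbb{V}(\sL_2, \vec{\lambda}, \ell)) = \sum_{\mu \in P_\ell(\sL_2)} \rkk(\mathbb{V}(\sL_2, (\lambda_p, \lambda_q, \mu), \ell)) \cdot \rkk(\mathbb{V}(\sL_2, (\lambda_k)_{k \neq p,q} \cup \{\mu^*\}, \ell)),$$
where $\mu^* = \mu$ is the dual weight for $\sL_2$. Replacing $\lambda_p, \lambda_q$ by $\ell - \lambda_p, \ell - \lambda_q$ changes the first factor from $\rkk(\mathbb{V}(\sL_2, (\lambda_p, \lambda_q, \mu), \ell))$ to $\rkk(\mathbb{V}(\sL_2, (\ell - \lambda_p, \ell - \lambda_q, \mu), \ell))$, and by the fusion symmetry just noted these are equal for every $\mu$; the second factor is untouched. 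Summing over $\mu$ gives the desired equality.

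\textbf{Main obstacle.} I expect the crux to be the three-point symmetry $\rkk(\mathbb{V}(\sL_2, (\lambda, \mu, \nu), \ell)) = \rkk(\mathbb{V}(\sL_2, (\ell - \lambda, \ell - \mu, \nu), \ell))$ — making sure it holds with \emph{no parity or range restrictions} beyond $\lambda, \mu, \nu \in \{0, 1, \dots, \ell\}$, and in particular that both sides are simultaneously zero when the parities are incompatible. The cleanest way to handle this is to recall the explicit rank-one criterion for three-point $\sL_2$ blocks at level $\ell$: $\rkk(\mathbb{V}(\sL_2, (\lambda, \mu, \nu), \ell)) = 1$ iff $\lambda + \mu + \nu$ is even, the ordinary (classical) triangle inequalities $|\lambda - \mu| \le \nu \le \lambda + \mu$ hold, and the ``quantum'' inequality $\lambda + \mu + \nu \le 2\ell$ holds; otherwise it is zero. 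One then checks directly that this set of conditions on $(\lambda, \mu, \nu)$ is preserved under $(\lambda, \mu, \nu) \mapsto (\ell - \lambda, \ell - \mu, \nu)$: the parity of $\lambda + \mu + \nu$ is unchanged since $(\ell - \lambda) + (\ell - \mu) + \nu = 2\ell - (\lambda + \mu) + \nu \equiv \nu - \lambda - \mu \equiv \lambda + \mu + \nu \pmod 2$; the classical lower triangle inequality and the quantum inequality swap roles (the condition $\nu \le \lambda + \mu$ becomes $\nu \le 2\ell - \lambda - \mu$, which is the quantum inequality, and vice versa); and $|\lambda - \mu| = |(\ell-\lambda) - (\ell-\mu)|$ is literally unchanged. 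This verification is a short case check rather than a genuinely hard step, but it is where care is needed since the statement of the claim allows arbitrary weight tuples. With this in hand, the induction and factorization steps are routine, and the subtlety that $J$ may be empty (so $n = |I|$ is even, and the ``second factor'' above degenerates appropriately) causes no problem.
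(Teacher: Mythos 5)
Your proof is correct, and it rests on the same two ingredients as the paper's own argument: the invariance of the three-point $\sL_2$ fusion rules under $(\lambda,\mu,\nu)\mapsto(\ell-\lambda,\ell-\mu,\nu)$ (which you verify exactly as the paper does, via parity, the unchanged $|\lambda-\mu|$, and the swap of the classical and quantum upper bounds), and factorization along a boundary stratum. The organization, however, is genuinely different. The paper inducts on the number of marked points $n$ and splits into parity cases: when $n$ is odd it factors off a pair $(\lambda_i,\lambda_j)$ with $i\in I$, $j\notin I$, which forces the attaching weight to be plussed as well before the inductive hypothesis can be applied to the $(n-1)$-point factor, and when $n$ is even it plusses both factored-off weights instead. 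You induct on the number of flipped pairs: reduce to $|I|=2$ (flips on disjoint pairs commute) and factor off exactly that pair, so the complementary factor is literally unchanged and only the three-point factor is altered --- no parity case split, no plussing of the attaching weight, no induction on $n$. This buys noticeably lighter bookkeeping at no real cost, while the paper's version makes explicit the fact that plussing interacts with the node weight in the factorization. One cosmetic slip: your parenthetical identifying the nonvanishing of $\rkk(\mathbb{V}(\sL_2,(\lambda,\mu,\ell-\nu),\ell))$ with that of $N_{\lambda\mu}^{\nu}$ conflates the level-$\ell$ diagram involution with duality (for $\sL_2$ every weight is self-dual, so no $\ell-\nu$ should appear); this is harmless, since the explicit parity-plus-inequalities criterion you actually use in the key verification is the correct one.
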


\begin{proof}
First, suppose $\vec{\lambda} = (a, b, c)$. Then the rank of $\mathbb{V}(\sL_2, \vec{\lambda}, \ell)$ is one if and only if $a+b+c$ is even and the inequalities in \ref{fusion} are satisfied. These inequalities are often called the \textit{Three Point Fusion Rules} for $\sL_2$ \cite[Prop.~3.5]{ags}.
\begin{align*}
a &\leq b+c          \\ \numberthis \label{fusion}
b&\leq a+c     \\
c&\leq a+b   \\
a+b+ c&\leq 2\ell  
\end{align*}
In order, these inequalities imply the following set of inequalities:
\begin{align*}
(\ell -b) & \leq (\ell -a) + c          \\ \numberthis \label{fusion2}
(\ell -a) & \leq (\ell - b) +c    \\
(\ell -a) +(\ell - b) + c & \leq 2 \ell  \\
c &\leq (\ell -a) + (\ell -b),
\end{align*}
which determine, $\rkk(\mathbb{V}(\sL_2, (\ell-a, \ell-b, c), \ell)) =1$. Hence, the result is true for $n=3$ weights.

We now consider two cases determined by the parity of $n$. The case when $n$ is even will follow from our argument with $n$ odd. 

Suppose $n$ is some fixed odd integer. For our base case, we have shown the result holds for $n=3$ weights. For the induction step, we factorize using a partition of $n-2$ weights and two weights, one of which has been ``plussed.'' Suppose $i\in I$ and $j \notin I$. Let $\vec{\lambda}_{\hat{i}, \hat{j}}$ the $(n-2)$-tuple obtained from $\vec{\lambda}$ with weights in the $i^{th}$ and $j^{th}$ spot removed. In the equation below, we drop notation and fixed $\sL_2$ and level $\ell$ for all bundles. Then,
\begin{align*}
\rkk(\mathbb{V}( \vec{\lambda})) & = \sum_{\mu=0}^{\ell} \rkk(\vec{\lambda}_{\hat{i}, \hat{j}}, \mu) \rkk(\lambda_i, \lambda_j, \mu)       \\ \numberthis \label{rankodd}
& =  \sum_{\mu=0}^{\ell} \rkk(\vec{\mu}_{\hat{i}, \hat{j}}, \ell -\mu) \rkk(\ell -\lambda_i, \lambda_j, \ell -\mu) \\
 &= \rkk( \vec{\mu}), 
\end{align*}
The second equality above follows from the induction hypothesis and the case $n=3$, and the third equality follows by factorization in reverse.

Now suppose $n$ is even. We use factorization again but now partition the weights into $n-2$ weights and two weights that have both been ``plussed.'' Suppose $i, j \in I$, then
\begin{align*}
\rkk(\vec{\lambda}) & = \sum_{\mu=0}^{\ell} \rkk(\vec{\lambda}_{\hat{i}, \hat{j}}, \mu) \rkk(\lambda_i, \lambda_j, \mu)         \\  \numberthis \label{rankeven}
& = \sum_{\mu=0}^{\ell} \rkk(\vec{\mu}_{\hat{i}, \hat{j}}, \mu) \rkk(\ell -\lambda_i, \ell -\lambda_j, \mu)) \\
& = \rkk( \vec{\mu}). 
\end{align*}
The justification of the equalities is the same as the odd case (notice, we do \textit{not} need to ``plus'' the attaching weight, $\mu$, in this case).
\end{proof}

\subsection{Stabilizing Lie Rank}
We now define a term and type C divisor associated to any set of weakly decreasing nonnegative integers $\vec{\lambda} = (a_1, ..., a_n)$. 

\begin{definition}\label{rlam}
Let $\vec{\lambda} = (a_1, ..., a_n)$ be an $n$-tuple of weakly decreasing nonnegative integers such that $|\vec{\lambda}|=\sum_{i=1}^n (a_i)$ is even. We define the \textit{stabilizing Lie rank} associated to $\vec{\lambda}$ to be, $$r_{\vec{\lambda}} = \frac{\sum (a_i)}{2}-1.$$ If $r_{\vec{\lambda}} \geq a_1$ (i.e., $\vec{\lambda} \in P_{1}(\sP_{2 r_{\vec{\lambda}}})^n$), then we call $c_1(\Vrlam)$ the stable Lie divisor for $\vec{\lambda}$.
\end{definition}

\begin{remark}\label{CL}
Interpreting $\vec{\lambda} = (a_1, ..., a_n)$ in Def. \ref{rlam} as a set of $n$ dominant integral weights for $\sL_2$, one can define the \textit{critical level} of $\vec{\lambda}$ \cite[Sect.~4.3]{Fakhruddin} (see \cite[Def.~1.1]{BGMA} for a general definition of critical level associated to bundles of type $\sL_{r+1}$). Additionally, one can show that if $a_1 \geq \rlam$ (see Claim~\ref{correction}) then $\rkk(\mathbb{V}_{\sL_{2r}, \vec{\lambda}, 1})=0$ (and hence the divisor is trivial) for any integer $r$. Because of this, we will assume the weight vector $\vec{\lambda}$ is such that $a_1 \leq \rlam$. 
\end{remark}

To reduce confusion, we will refer to the rank $\ell$, of the Lie algebra $\sP_{2\ell}$, as the \textit{Lie rank} and the rank of the vector bundle $\mathbb{V}_{\sP_{2\ell}, \vec{\lambda}, 1}$ as the \textit{vector bundle rank}.


\section{Proof of divisor equivalence for rank one bundles}\label{proof}

In this section we prove Proposition \ref{main}. The set of $F$-curves span the vector space of 1-cycles on $\M_{0,n}$. We proceed by showing the divisors $c_1(\VsL)$ and $c_1(\VsP)$ intersect all $F$-curves in the same degree if and only if the bundles have rank one (or zero). By using factorization to compute ranks, we can show that the degree computation simplifies greatly in the case when the vector bundles have rank one. We state this simplification in the following observation.


\begin{observation}\label{OneTerm}
For $\mathbb{V}(\mathfrak{g}, \vec{\lambda}, \ell)$ of rank 1, there is at most one nonzero term in the intersection formula of \cite[Prop.~2.7]{Fakhruddin}. Particularly, this potential nonzero term is the degree of a four pointed bundle with rank one.
\end{observation}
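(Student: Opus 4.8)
The plan is to unpack Fakhruddin's intersection formula \cite[Prop.~2.7]{Fakhruddin} on an $F$-curve $F_{N_1,N_2,N_3,N_4}$ (a partition of $[n]$ into four nonempty blocks) and observe that each summand is a product of three ranks of ``boundary'' conformal blocks bundles on lower $\Mgn$ and the degree of a single four-pointed bundle on $\M_{0,4}$. Concretely, the formula reads
\begin{equation*}
\deg\bigl(\mathbb{V}(\mathfrak{g},\vec\lambda,\ell)\cdot F_{N_1,N_2,N_3,N_4}\bigr)
= \sum_{\vec\mu}\ \deg\bigl(\mathbb{V}(\mathfrak g,(\mu_1,\mu_2,\mu_3,\mu_4),\ell)\bigr)\ \prod_{j=1}^{4}\rk\bigl(\mathbb{V}(\mathfrak g,(\vec\lambda_{N_j},\mu_j^{*}),\ell)\bigr),
\end{equation*}
where the sum runs over all $4$-tuples $\vec\mu = (\mu_1,\mu_2,\mu_3,\mu_4)$ of weights at level $\ell$ (with $\mu_j^*$ the dual weight). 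So the first step is just to recall this formula and fix notation.

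Next I would use the key fact that rank is multiplicative under this kind of degeneration: the rank of $\mathbb{V}(\mathfrak g,\vec\lambda,\ell)$ on $\M_{0,n}$ is recovered by ``propagating'' and factorizing, and in particular
\begin{equation*}
\rk\bigl(\mathbb{V}(\mathfrak g,\vec\lambda,\ell)\bigr) = \sum_{\vec\mu}\ \rk\bigl(\mathbb{V}(\mathfrak g,\vec\mu,\ell)\bigr)\ \prod_{j=1}^{4}\rk\bigl(\mathbb{V}(\mathfrak g,(\vec\lambda_{N_j},\mu_j^{*}),\ell)\bigr),
\end{equation*}
where now $\rk\bigl(\mathbb{V}(\mathfrak g,\vec\mu,\ell)\bigr)\ge 1$ for every $\vec\mu$ that actually contributes (a nonzero rank four-pointed bundle). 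Since the vector bundle rank is a nonnegative integer and the whole left-hand side equals $1$, at most one index $\vec\mu$ can contribute a nonzero summand, and for that $\vec\mu$ every factor in the product, as well as the four-pointed rank $\rk\bigl(\mathbb{V}(\mathfrak g,\vec\mu,\ell)\bigr)$, must equal $1$. (If the rank is $0$ the conclusion is vacuous: every term vanishes.) This is the crux of the argument.

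Finally I would feed this back into the intersection formula: all summands except possibly the one corresponding to that distinguished $\vec\mu$ are zero because their accompanying product of boundary ranks is zero; and the surviving summand is $\deg\bigl(\mathbb{V}(\mathfrak g,(\mu_1,\mu_2,\mu_3,\mu_4),\ell)\bigr)\cdot 1$, i.e. the degree of a four-pointed bundle of rank one, exactly as claimed. It is worth noting that this reasoning is independent of the $F$-curve chosen, so it applies to every $F$-curve simultaneously, which is what the later proof of Proposition~\ref{main} will need.

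I expect the main subtlety (rather than a true obstacle) to be making precise the claim that every $\vec\mu$ appearing with a nonzero boundary product in the rank factorization formula has $\rk\bigl(\mathbb{V}(\mathfrak g,\vec\mu,\ell)\bigr)\ge 1$ — i.e. that one cannot have a nonzero product of boundary ranks multiplying a four-pointed bundle of rank zero while still summing to a positive total. This is immediate from the same factorization identity (a zero four-pointed rank forces that entire summand to zero, so it never mattered), but it should be stated cleanly. Everything else is bookkeeping with nonnegative integers.
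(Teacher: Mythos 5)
Your argument is correct and takes essentially the same route as the paper, which states Observation \ref{OneTerm} without a separate proof, appealing to precisely this factorization of the rank into a sum of nonnegative integer products so that total rank one forces a single surviving summand with every factor equal to one. The only point to state cleanly is the one you already flag: a tuple $\vec{\mu}$ with nonzero boundary product but rank-zero four-pointed bundle can occur, and its term in the degree sum vanishes because a rank-zero bundle has degree zero, not because the boundary product vanishes.
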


From this observation and Fact \ref{rankfact}, Corollary~\ref{FacRankSLSP} for computing degrees for rank one bundles $\VsL$ and $\VsP$ follows.

\begin{corollary}\label{FacRankSLSP}
For a fixed $\ell$ and $\vec{\lambda}$, let $\VsL$ and $\VsP$ be defined as in Def. \ref{def1}. If $\rk(\VsL)$ is one, then for any $F$-curve, $F_{I_1, I_2,I_3,I_4}$, there is a four-tuple of nonnegative integers $\vec{\mu}=(a, b, c, d)$ such that

\begin{equation}
\deg(\VsL| F_{I_1, I_2,I_3,I_4}) = \deg(\mathbb{V}(\sL_2, \vec{\mu}, \ell))
\end{equation}
and
\begin{equation}
\deg(\VsP| F_{I_1, I_2,I_3,I_4}) = \deg(\mathbb{V}(\sP_{2\ell}, \vec{\mu}, 1)).
\end{equation}
\end{corollary}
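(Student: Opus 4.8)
The plan is to combine Observation \ref{OneTerm} with Fact \ref{rankfact} and the factorization formula \cite[Prop.~2.7]{Fakhruddin}. First I would recall that for any $F$-curve $F_{I_1,I_2,I_3,I_4}$, the intersection formula of Fakhruddin expresses $\deg(\mathbb{V}(\mathfrak{g},\vec\lambda,\ell)|_{F_{I_1,I_2,I_3,I_4}})$ as a sum, over all $4$-tuples of weights $(\mu_1,\mu_2,\mu_3,\mu_4)$ at the appropriate level attached to the four ``legs'' of the $F$-curve, of a product of the four ranks $\rkk(\mathbb{V}(\mathfrak{g},(\vec\lambda_{I_k},\mu_k^{*}),\ell))$ over $k=1,\dots,4$ times the degree $\deg(\mathbb{V}(\mathfrak{g},(\mu_1,\mu_2,\mu_3,\mu_4),\ell))$ on $\M_{0,4}$. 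Since $\rkk(\VsL)=1$, propagation of sections / factorization forces each of the four leg-ranks to be $1$ for exactly one choice of attaching tuple $\vec\mu=(a,b,c,d)$, and to be $0$ for every other tuple; this is precisely the content of Observation \ref{OneTerm}, which I would invoke to reduce the sum to its single surviving term $\deg(\mathbb{V}(\sL_2,\vec\mu,\ell))$. This establishes the first displayed equality.

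For the second displayed equality I would run the identical argument for $\VsP=\mathbb{V}(\sP_{2\ell},\vec\lambda,1)$. The key point is that, by Fact \ref{rankfact}, the type C bundle has the same rank as the corresponding type A bundle at every relevant configuration of weights: $\rkk(\mathbb{V}(\sP_{2\ell},\vec\nu,1))=\rkk(\mathbb{V}(\sL_2,\vec\nu,\ell))$ for any weight tuple $\vec\nu$ obtained by restricting and attaching along the $F$-curve (note the level-one weights $\omega_a$ for $\sP_{2\ell}$ correspond to the integers $a$, exactly as in Def. \ref{def1}, so the dictionary between the two weight sets is the identity on the indexing integers). Hence the factorization formula for $\VsP$ has its nonzero leg-rank products supported on exactly the same attaching tuple $\vec\mu=(a,b,c,d)$, each rank equal to $1$, leaving the single surviving term $\deg(\mathbb{V}(\sP_{2\ell},\vec\mu,1))$. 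Crucially, the same $\vec\mu$ works for both bundles because it is determined purely by the vanishing/non-vanishing pattern of the ranks, which agrees for type A and type C by Fact \ref{rankfact}.

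The one point requiring care — and the only real obstacle — is verifying that the attaching weights appearing in the type C factorization really do range over the set $P(\sP_{2\ell},1)$ indexed by the same integers $0,\dots,\ell$ as in the type A case, so that the rank equality of Fact \ref{rankfact} applies term-by-term and picks out a single common $\vec\mu$; this is a bookkeeping matter of matching the two weight lattices under the correspondence $i\,\omega_1 \leftrightarrow \omega_i$, together with the observation that degrees on $\M_{0,4}$ only enter once the leg-ranks are already pinned down to $1$. Once that identification is in place, the two equalities follow immediately from Observation \ref{OneTerm}, and there is nothing further to compute here — the actual degree values $\deg(\mathbb{V}(\sL_2,\vec\mu,\ell))$ and $\deg(\mathbb{V}(\sP_{2\ell},\vec\mu,1))$ are handled separately by Lemmas \ref{DegreeSL} and \ref{DegreeSP}.
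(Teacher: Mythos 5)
Your proposal is correct and takes essentially the same route as the paper, which deduces the corollary directly from Observation \ref{OneTerm} applied to Fakhruddin's factorization of the $F$-curve degree, together with Fact \ref{rankfact} to see that the same attaching tuple $\vec{\mu}$ survives for the type C bundle. (One minor imprecision: for the other attaching tuples it need not be the four leg-ranks that vanish --- it may instead be the central four-point rank, and hence its degree --- but that is exactly what Observation \ref{OneTerm} records, so your conclusion is unaffected.)
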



It follows from Corollary \ref{FacRankSLSP} that to compare intersection numbers for rank one bundles $\VsL$ and $\VsP$ with any $F$-curve, we need only compare the four pointed degrees of rank one bundles. We now prove a result about such bundles. 

\begin{lemma}\label{RankDegree}
Let $\ell$ and $\vec{\lambda} = (a, b, c, d)$ be fixed such that $\ell \geq a \geq b\geq c \geq d$ and $a+b+c+d=2(\ell+s)$ for some integer $s$. Let $\VsL$ and $\VsP$ be defined as in Def. \ref{def1}. Then
 $$\rk(\VsL) = \rk(\VsP) = 1 \text{ or } 0$$ if and only if $$\deg(\VsL ) = \deg(\VsL) .$$ Particularly, in the case when degrees are equal, they are equal to $max \{ 0, s\}$. 
\end{lemma}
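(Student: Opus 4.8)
The plan is to prove Lemma~\ref{RankDegree} by a direct case analysis, splitting on the sign of $s$ and on whether $a+d \geq b+c$ or $a+d \leq b+c$, exactly matching the four branches of Lemma~\ref{DegreeSP}/Lemma~\ref{DegreeSPRank}. First I would dispose of the degenerate cases: if $a > \ell + s$ then by Claim~\ref{correction} both ranks and both degrees vanish, and since $s < a - \ell \leq 0$ we have $\max\{0,s\} = 0$, so the statement holds trivially. Likewise, if $\rk = 0$ (equivalently, by Fact~\ref{rankfact}, one of the Three Point Fusion-type obstructions in Lemma~\ref{RankOne} fails in the complementary way), then Lemma~\ref{DegreeSL} gives $\deg(\VsL) = 0$ and Lemma~\ref{DegreeSPRank} gives $\deg(\VsP) = 0$, so both degrees are zero; I still need to check that in this situation $\max\{0,s\}=0$ as well, i.e.\ that $\rk = 0$ forces $s \le 0$ — this follows because $s > 0$ together with $a \le \ell + s$ and the remaining inequalities would put us in the positive-rank regime of Lemma~\ref{RankOne}, but I should double-check the edge case $s>0$ with some $a_i \le s$ (which by Lemma~\ref{RankOne} can give $\rk = 0$); there I would instead argue directly from Lemma~\ref{DegreeSL} that $\deg(\VsL)=0$ forces the rank to be $0$ precisely when $s \le 0$, and handle the $a_i \le s$ sub-case separately using the explicit degree formula.

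With the degenerate cases out of the way, the core of the argument is the rank-one case. Assuming $\rk(\VsL) = \rk(\VsP) = 1$, I would feed the classification from Lemma~\ref{RankOne} into the two degree formulas. For $s \ge 0$: Lemma~\ref{DegreeSL} immediately gives $\deg(\VsL) = \rk(\VsL)\cdot s = s = \max\{0,s\}$. For $\deg(\VsP)$ I use Lemma~\ref{DegreeSPRank}: in the branch $a+d \ge b+c$ with $s>0$ the value is $\rk(\VsP)(\ell + 2s - a)/2 = (\ell + 2s - a)/2$, and I must show this equals $s$; this is where condition~(3) of Lemma~\ref{RankOne} enters, since $a = \ell$ forces $(\ell + 2s - a)/2 = s$ directly, while if instead $d = s$ I would rewrite using $a + b + c + d = 2(\ell+s)$ and $a + d \ge b + c$ to again extract $(\ell + 2s - a)/2 = s$. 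The branch $a + d \le b + c$ with $s > 0$ gives $\rk(\VsP)(d+s)/2 = (d+s)/2$, and here $d = s$ (from condition (3), noting $a = \ell$ combined with $a+d \le b+c$ and the sum constraint also yields this) gives $(d+s)/2 = s$. The $s = 0$ sub-case of the $s \ge 0$ branch is a limiting case of the same computations, giving degree $0 = \max\{0,0\}$. For $s < 0$: Lemma~\ref{RankOne} says $a = \ell + s$, so $\deg(\VsL) = 0 = \max\{0,s\}$ by Lemma~\ref{DegreeSL}, and in Lemma~\ref{DegreeSPRank} the relevant branches ($0 \ge s$) give $\rk(\VsP)(\ell + s - a)/2 = 0$ or $\rk(\VsP)d/2$; for the latter I need $d = 0$, which I would derive from $a = \ell + s$, the sum constraint, and $a + d \le b + c$.

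Finally, for the converse I would show that if the bundles have rank $\ge 2$ then the degrees differ. Here Corollary~\ref{Inequality} already gives $\deg(\VsL) \le \deg(\VsP)$, so it suffices to produce strict inequality. Comparing Lemma~\ref{DegreeSL} ($\deg(\VsL) = \rk \cdot s$ when $s \ge 0$, else $0$) with Lemma~\ref{DegreeSPRank}, in each branch the $\sP$-degree is $\rk$ times a quantity of the form $(\ell + 2s - a)/2$, $(\ell + s - a)/2$, $(d+s)/2$, or $d/2$; I would show that the rank-$\ge 2$ conditions of Lemma~\ref{RankOne} (namely $a \ne \ell$ when $s \ge 0$, or $a < \ell + s$ when $s < 0$, together with all $a_i > s$) force this quantity to be strictly larger than $\max\{0,s\}$, hence $\deg(\VsP) \ge 2\max\{0,s\} + \rk/2 \cdot(\text{positive}) > \max\{0,s\} \ge \deg(\VsL)$ — more simply, the per-rank degree strictly exceeds the $\sL_2$ per-rank degree $s$, so multiplying by $\rk \ge 2$ only widens the gap. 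The main obstacle I anticipate is bookkeeping: making sure each of the four branches of the $\sP$ degree formula is correctly paired with the applicable sub-case of the rank-one classification, and carefully tracking which of the two alternatives in condition~(3) of Lemma~\ref{RankOne} is active, since the arithmetic identity $(\ell + 2s - a)/2 = s$ (resp.\ $(d+s)/2 = s$) must be re-derived from the sum constraint in the $d = s$ alternative rather than read off directly. I would organize the whole proof as a short table of cases to keep this transparent.
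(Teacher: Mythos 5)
Your proposal is correct and follows essentially the same route as the paper's proof: feed the rank-one classification of Lemma~\ref{RankOne} into the four-branch degree formulas of Lemmas~\ref{DegreeSL} and \ref{DegreeSP}/\ref{DegreeSPRank}, using the sum constraint together with $a+d\gtrless b+c$ to force $a=\ell$ (resp.\ $d=s$) and get degree $\max\{0,s\}$, and for rank $\geq 2$ compare per-rank degrees (with Corollary~\ref{Inequality}) to get strict inequality. The two loose ends you flag are genuinely there but harmless: rank $0$ with $s>0$ does occur (so the ``equal to $\max\{0,s\}$'' clause should be read as pertaining to the rank-one case, a point the paper's proof also glosses over), and in the converse with $s<0$ and $a+d\le b+c$ you need $d\geq 1$, which follows since $d=0$ would reduce to a three-point rank $\leq 1$, contradicting rank $\geq 2$.
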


\begin{proof}
We first prove the forward ($\Rightarrow$) direction. If the rank of the bundles is zero, then degree formulas will be consistent (both trivial). Assume then that $\rkk(\VsL) = \rkk(\VsP) = 1$. We show the degree formulas in Lemma \ref{DegreeSL} and \ref{DegreeSP} are equal. To do this, we compare the corresponding degree formulas in the  four cases determined be the relationship of $a, b, c, d, \ell$ and $s$ in Lemma~\ref{DegreeSP}. We go through the first case; the other cases follow from similar calculations.


In the first case of Lemma \ref{DegreeSP}, suppose $a+d \geq b+c$ and $s \geq 0$. Since we are assuming $\rk(\mathbb{V}_{\sL_2, \ell}) = 1$, Lemma \ref{DegreeSL} and \ref{DegreeSP} give 

\begin{equation}\label{sl2degrees}
\deg(\mathbb{V}_{\sL_2, \ell}) = s \text{ and }
\end{equation}

\begin{equation}\label{CaseOne}
\deg(\mathbb{V}_{\sP_{2\ell},1}) = max \{ 0, (\ell + 1 - a)(\ell +2s - a)/2 \} 
\end{equation}

Now, since $a \leq \ell$ and $s \geq 0$ the max in Equation \ref{CaseOne} is nonzero. Furthermore, since $\rkk( \mathbb{V}_{\sP_{2\ell},1}) = 1$ Lemma \ref{RankOne} implies $d \geq s$ and we either have $a = \ell$ or $d = s$. However, since $a+b+c+d= 2(\ell +s)$ and $a+d \geq b+c$ it follows that $a+d \geq \ell + s$  and so indeed, $a = \ell$. Using this, Equation \ref{CaseOne} becomes, 
$$= (\ell + 1 - \ell)(\ell + 2s - \ell) /2 = s,$$
showing Equations \ref{sl2degrees} and \ref{CaseOne} are equal and so $\deg(\mathbb{V}_{\sP_{2\ell},1}) =\deg(\mathbb{V}_{\sL_2, \ell})$. 

%
%
%
%



To show equality in the other three cases ($a+d < b+c$ and $s \geq 0$,  $a+d \geq b+c$ and $s < 0$, and  $a+d < b+c$ and $s < 0$) follow similar arguments and computations. \\

For the reverse implication ($\Leftarrow$), we assume $\rk(\VsL) = \rk(\VsP) > 1$. From Lemma \ref{RankOne} we can assume $s >0$ and both $a < \ell$ and $d > s$. We compare the four point degree formula from \cite[Prop.~4.2]{Fakhruddin} for $\sL_2$ bundles with our corresponding formula for $\sP_{2\ell}$ in Lemma~\ref{DegreeSP}. We must consider two cases.\\
 
 \textbf{Case One:} $a+d \geq b+c$. In this case, we compare, 
 
  \begin{equation}
 \deg(\VsL) =  max \{ 0, (\ell +1 -a)(s) \} \text{ and }
 \end{equation}
 \begin{equation}
 \deg(\VsP) =  max \{ 0, (\ell +1 -a)(\ell+2s-a)/2 \}.
 \end{equation}
 With our assumptions, it follows that both values are nonzero and not equal since $\ell < a$.
 
 \textbf{Case Two:} $a+d < b+c$. In this case, we compare,
  
 \begin{equation}
 \deg(\VsL) =  max \{ 0, (1+d-s)(s) \} \text{ and }
 \end{equation}
 \begin{equation}
 \deg(\VsP) =  max \{ 0, (\ell +1 -d)(d+s)/2 \}.
 \end{equation}
 With our assumptions, both values are nonzero (as $d > s$) and not equal. 
\end{proof}

The result of Lemma~\ref{RankDegree} and the method of computing rank using factorization in the discussion of Observation \ref{OneTerm} allow us to explicitly determine when two intersection numbers for $\VsL$ and $\VsP$ with an arbitrary number of $n$ weights are equal. With Corollary~\ref{Inequality} we also obtain an inequality on the intersection numbers for these bundles. We summarize this result in the following corollary. 

\begin{corollary}\label{RORF}
Let $\VsL$ and $\VsP$ be defined as in Def. \ref{def1} for some fixed integer $\ell$ and $n$-tuple $\vec{\lambda}$. Given a partition $\{1, ..., n\} = I_1 \sqcup I_2 \sqcup I_3 \sqcup I_4$ determining an $F$-curve on $\M_{0,n}$, the intersection numbers of $\VsL$ and $\VsP$ are equal on that $F$-curve if and only if the four pointed bundles appearing as the degree terms \cite[Prop.~2.7]{Fakhruddin} are rank one or zero bundles. Furthermore, the following relation always holds $$\deg(\VsL| F_{I_1, I_2,I_3,I_4}) \leq \deg(\VsP| F_{I_1, I_2,I_3,I_4}).$$
\end{corollary}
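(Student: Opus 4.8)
The plan is to deduce Corollary~\ref{RORF} directly from the intersection-number formula of \cite[Prop.~2.7]{Fakhruddin}, together with the earlier results on ranks and four-pointed degrees. First I would recall that for an $F$-curve $F_{I_1,I_2,I_3,I_4}$ the formula expresses $\deg(\mathbb{V}(\mathfrak{g},\vec{\lambda},\ell)\vert F)$ as a sum, over compatible four-tuples of weights $\vec{\mu}=(\mu_1,\mu_2,\mu_3,\mu_4)$ placed at the four attaching nodes, of a product of ranks of the three-pointed ``external'' bundles on the legs with the degree of the four-pointed bundle $\mathbb{V}(\mathfrak{g},\vec{\mu},\ell)$. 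By Fact~\ref{rankfact} the leg ranks for $\sL_2$ at level $\ell$ and $\sP_{2\ell}$ at level one agree term by term, so the two sums $\deg(\VsL\vert F)$ and $\deg(\VsP\vert F)$ run over exactly the same index set with exactly the same rank coefficients; only the four-pointed degree factor $\deg(\mathbb{V}(\sL_2,\vec{\mu},\ell))$ versus $\deg(\mathbb{V}(\sP_{2\ell},\vec{\mu},1))$ can differ. Hence the two intersection numbers agree if and only if, for every $\vec{\mu}$ that contributes a nonzero rank product, the corresponding four-pointed degrees are equal; and likewise the inequality $\deg(\VsL\vert F)\le\deg(\VsP\vert F)$ follows termwise from Corollary~\ref{Inequality} since all rank coefficients are nonnegative.

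The next step is to identify which $\vec{\mu}$ contribute. A term is nonzero only when all three leg ranks $\rk(\mathbb{V}(\sL_2,\vec{\lambda}_{I_k}\cup\{\mu_k\},\ell))$ are positive; by Fact~\ref{rankfact} these positivity conditions are the same for the type $A$ and type $C$ side. Combining such a term with the four-pointed factor and invoking Lemma~\ref{RankDegree}, I would argue: if the four-pointed bundle $\mathbb{V}(\sL_2,\vec{\mu},\ell)$ (equivalently $\mathbb{V}(\sP_{2\ell},\vec{\mu},1)$) has rank one or zero, then by Lemma~\ref{RankDegree} its two degrees coincide (and equal $\max\{0,s_{\vec{\mu}}\}$), so this term contributes equally to both sums; whereas if it has rank $>1$, then by the reverse implication of Lemma~\ref{RankDegree} the two four-pointed degrees are strictly unequal with $\deg(\mathbb{V}(\sL_2,\vec{\mu},\ell))<\deg(\mathbb{V}(\sP_{2\ell},\vec{\mu},1))$, and moreover this term is genuinely present because its leg-rank coefficient is positive and its four-pointed degree on the type $C$ side is positive. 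This last point is what pins down ``if and only if'': a single surviving rank-$(>1)$ four-pointed term forces the total type $C$ degree to strictly exceed the type $A$ degree, because no other term can compensate (every term satisfies the same-direction inequality of Corollary~\ref{Inequality}).

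The main obstacle I anticipate is the bookkeeping needed to make the ``only if'' direction airtight: I must rule out the possibility that a rank-$(>1)$ four-pointed factor is multiplied by a vanishing leg-rank product, or that strict inequalities in different $\vec{\mu}$-terms somehow cancel. The first is handled cleanly because the leg ranks are nonnegative integers and the term is by definition only counted when they are positive; the second is exactly where I lean on the uniform inequality $\deg(\VsL\vert F)=\sum(\text{coeff})\deg(\mathbb{V}(\sL_2,\vec{\mu},\ell))\le\sum(\text{coeff})\deg(\mathbb{V}(\sP_{2\ell},\vec{\mu},1))=\deg(\VsP\vert F)$ with equality forced in each term — so if even one term has a strict inequality, the total is strict. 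I would also briefly note the edge case where the $F$-curve contributes no nonzero terms at all (both degrees zero), which is consistent with the statement. Once these points are assembled, the proof is essentially a termwise comparison: $$\deg(\VsL\vert F_{I_1,I_2,I_3,I_4})=\deg(\VsP\vert F_{I_1,I_2,I_3,I_4})\iff \text{every contributing four-pointed factor has rank }\le 1,$$ with the displayed inequality holding unconditionally.
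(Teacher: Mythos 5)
Your proposal is correct and follows essentially the same route the paper takes (and merely summarizes): expand both intersection numbers via Fakhruddin's factorization formula, identify the rank coefficients on the legs term by term using Fact~\ref{rankfact}, and compare the four-pointed degree factors with Lemma~\ref{RankDegree} together with Corollary~\ref{Inequality}, so that the one-sided termwise inequality rules out cancellation and a single rank-$(>1)$ contributing term forces strict inequality. Your explicit handling of the vanishing-coefficient and all-terms-zero edge cases is a reasonable fleshing-out of what the paper leaves implicit.
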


It turns out, if a vector bundle $\VsL$ or $\VsP$ satisfies the conditions in Corollary~\ref{RORF} with every possible partition determining an $F$-curve then this also implies the bundles $\VsL$ and $\VsP$ have rank one. We show this in the following lemma using a restated, but equivalent, condition. 

\begin{proposition}\label{RORFLemma}
Let $\VsL$ and $\VsP$ be defined as in Def. \ref{def1} for some fixed $\ell$ and $\vec{\lambda}$. If the rank computation along any partition $\{1, ..., n\} = I_1 \sqcup I_2 \sqcup I_3 \sqcup I_4$ determined by any $F$-curve on $\M_{0,n}$ has rank one on all of the four pointed bundles associated to the four attaching weights appearing in the degree formula \cite[Prop.~2.7]{Fakhruddin} (see Equation \ref{FacRank1} for where the bundles with four attaching weights appear explicitly), then $\rkk(\VsL) = 1$. 
\end{proposition}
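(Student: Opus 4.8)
The plan is to prove the contrapositive: assuming $\rkk(\VsL) = \rkk(\VsP) > 1$, I will exhibit a single $F$-curve partition along which one of the four attaching-weight factors is a four-pointed bundle of rank $> 1$. By Fact~\ref{rankfact} it suffices to work entirely with $\sL_2$ bundles and their ranks, computed via factorization. First I would recall from Lemma~\ref{RankOne} that $\rkk(\VsL) > 1$ forces $s > 0$ (where $2(\ell+s) = |\vec\lambda|$) together with $a_1 < \ell$ and, after ordering, $a_n > s$; more usefully, rank $> 1$ means the $n$-point fusion product has at least two distinct attaching-weight sequences contributing, so some internal edge of the trivalent graph carries two distinct allowable weights.

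The key step is a \emph{reduction to four points that preserves rank} $>1$. Using repeated factorization \cite[Prop.~2.4]{Fakhruddin} along a caterpillar (linear) trivalent tree, write $\rkk(\VsL) = \sum \prod \rkk(\text{3-point factors})$ over sequences of internal weights $(\mu_1,\dots,\mu_{n-3})$. Since this sum exceeds $1$, I can find two sequences of internal weights that both give nonzero product and differ in at least one coordinate, say coordinate $k$; fixing all other internal weights at their (common) values, the partial product as a function of $\mu_k$ is a rank-one-term in a four-point bundle whose four "external" weights are the two neighbouring internal weights and two of the original $a_i$'s — and that four-point bundle has rank $\geq 2$ because at least two values of $\mu_k$ contribute. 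This four-point data then comes from an honest $F$-curve partition $I_1 \sqcup I_2 \sqcup I_3 \sqcup I_4$ of $\{1,\dots,n\}$ (grouping the original marked points according to which side of the two cut edges they lie on), and along that $F$-curve the degree formula \cite[Prop.~2.7]{Fakhruddin} has a factor which is the degree of a rank-$\geq 2$ four-pointed bundle. Hence the hypothesis of the proposition fails, proving the contrapositive; combined with Corollary~\ref{RORF} this also recovers $c_1(\VsL) \neq c_1(\VsP)$ in this case.

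I expect the main obstacle to be bookkeeping: verifying that the four-point factor extracted from the caterpillar factorization genuinely has rank $\geq 2$ as a standalone four-pointed conformal block, rather than merely having two terms that happen to cancel contributions globally. The point is that factorization ranks are nonnegative, so no cancellation occurs; once two distinct internal weights $\mu_k, \mu_k'$ each extend to a nonzero global term, each individually forces the corresponding three-point ranks on both sides of edge $k$ to be nonzero, and hence $\rkk(\mathbb{V}(\sL_2, (\mu_{k-1}, a_i, a_j, \mu_{k+1}), \ell)) \geq \rkk(\cdot, \mu_k) + \rkk(\cdot, \mu_k') \geq 2$ by one more application of factorization at edge $k$. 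A secondary point requiring care is the translation between the "two cut edges" of the caterpillar and a legitimate four-part partition of the marked points (the tree structure guarantees the complement of two disjoint edges has exactly four connected components, at most one of which may contain no marked point — but in the caterpillar setup one arranges that all four parts are nonempty, or handles the degenerate $F$-curve separately via Observation~\ref{OneTerm}). With these two points settled, the argument is complete.
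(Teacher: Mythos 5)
Your overall strategy (prove the contrapositive: rank $\geq 2$ forces some $F$-curve whose central four-point bundle has rank $\geq 2$) is sound, and your observations that factorization ranks are nonnegative and that two admissible values of a single internal weight, with the neighbouring data held fixed, produce a rank-$\geq 2$ four-pointed bundle are correct. But the crux of your argument is unjustified. From $\rkk(\VsL)\geq 2$ you get two admissible sequences of internal weights on the caterpillar, and you then say ``fixing all other internal weights at their (common) values'' at a coordinate where they differ. Two contributing sequences need not agree anywhere outside the differing coordinate: they may differ in many coordinates simultaneously, and then neither sequence supplies a single edge $e_k$ at which two values $\mu_k\neq\mu_k'$ are admissible against the \emph{same} pair $(\mu_{k-1},\mu_{k+1})$. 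Taking the first coordinate where the two sequences differ only pins down $\mu_{k-1}$, not $\mu_{k+1}$, so the four-point bundle you want to exhibit as rank $\geq 2$ is not produced. What you actually need is a nontrivial combinatorial statement about $\sL_2$ fusion labelings (roughly: if the chain of fusion constraints has at least two admissible labelings, then it has two labelings differing in exactly one coordinate, or at least some partition --- possibly not of caterpillar type --- whose central bundle receives two attaching weights with common complementary data). Because the level constraint $\mu_{k-1}+\lambda+\mu_k\leq 2\ell$ is present, naive coordinatewise max/min or ``first difference'' arguments fail, and no proof of this reduction is offered; this is exactly the difficulty the paper's proof is built to handle, via the plussing normalization ($a,b,c\leq \ell/2$), the rank-one classification of Lemma~\ref{RankOne}, a case analysis on the signs of $s,\tilde s$, and the use of partitions such as $\{1\}\sqcup\{2,3\}\sqcup\{4\}\sqcup\{5\}$ that are not of the ``block--singleton--singleton--block'' form your caterpillar construction produces, followed by an induction on $n$.

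Two smaller points. First, removing two disjoint edges from a tree leaves three connected components, not four; your translation from ``two cut edges'' to an $F$-curve partition only works when the middle piece consists of two vertices each carrying exactly one marked point (giving parts: left block, $\{k+1\}$, $\{k+2\}$, right block), or when a cut-off end carries two marked points that you split into singletons --- this is repairable but should be stated correctly. Second, even granting the single-coordinate reduction, you must check that the two leg ranks $\rkk(\mathbb{V}_{\lambda_{I_1},\mu_{k-1}})$ and $\rkk(\mathbb{V}_{\lambda_{I_4},\mu_{k+1}})$ are nonzero so that the offending four-point bundle genuinely ``appears'' in the degree formula; this does follow from restricting a global admissible labeling, but it is part of the bookkeeping you should make explicit. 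As it stands, the proposal is an attractive plan rather than a proof: the missing reduction is precisely where the mathematical content of Proposition~\ref{RORFLemma} lies.
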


\begin{proof}
We first show the result with $n=5$ weights. Using induction on the number of weights, we conclude the result for arbitrary $n$. 

For $n=5$ we will denote our weights $\vec{\lambda} = (a, b, c, d, e)$ to match notation from Lemma~\ref{RankOne}. Using a method called `plussing' (see \cite[Definition 8.2]{BGK}), we can assume at least three of our weights are $a, b, c \leq \frac{\ell}{2}$. Further, suppose $a \geq b \geq c$. Consider the partition $[n] = \{1\} \sqcup \{2\} \sqcup \{3\} \sqcup \{4, 5\}.$ Using factorization along this partition, we have the following computation (note that the involution on the $\sL_2$ weight system $\mu \mapsto \mu^*$ is the identity, so we use $\mu$ in the calculation below) 
\begin{equation}\label{FacRank1}
\rkk(\VsL) = \sum_{\vec{\mu}\in P_{\ell}(\mathfrak{sl}_2)^4} \rkk(\mathbb{V}_{\vec{\mu}})\rkk(\mathbb{V}_{a, \mu_1})\rkk(\mathbb{V}_{b, \mu_2})\rkk(\mathbb{V}_{c, \mu_3})\rkk(\mathbb{V}_{d, e, \mu_4}),
\end{equation}
where $\vec{\mu}=(\mu_1, \mu_2, \mu_3, \mu_4)$ and we denote $\mathbb{V}_{\vec{\mu}}$ for the bundle $\mathbb{V}(\sL_2, \vec{\mu}, \ell)$.

Using the fusion rules for $\sL_2$, the two pointed rank terms are nonzero (and equal to one) if and only if the two weights are equal. This equation thus reduces to,
\begin{equation}\label{FacRank1sim}
\rkk(\VsL) = \sum_{\mu \in P_{\ell}(\mathfrak{sl}_2)} \rkk(\mathbb{V}_{a, b, c, \mu})\rkk(\mathbb{V}_{d, e, \mu}).
\end{equation}

The assumption of the Proposition statement is that $\rkk(\mathbb{V}_{a, b, c, \mu}) =1$ or $0$ for any $\mu \in P(\mathfrak{sl}_2, \ell)$. In order to show the original bundle has rank one, we must show that there is only one nonzero term appearing in Equation~\ref{FacRank1sim}. 

For contradiction, suppose there are at least two nonzero terms in Equation~\ref{FacRank1sim}, that is,
\begin{equation}\label{Contradiction}
\rkk(\VsL) \geq \rkk(\mathbb{V}_{a, b, c, \mu})\rkk(\mathbb{V}_{d, e, \mu}) +  \rkk(\mathbb{V}_{a, b, c, \tilde{\mu}})\rkk(\mathbb{V}_{d, e, \tilde{\mu}}) \geq 2
\end{equation}
and $\mu \neq \tilde{\mu}$; particularly, let's assume $\mt < \mu$.

We consider possible cases of weights $\mu$ and $\tilde{\mu}$ such that $\rkk(\mathbb{V}_{a, b, c, \mu}) =  \rkk(\mathbb{V}_{a, b, c, \tilde{\mu}}) =1$ from the classification of rank one bundles from \cite{Hobson} restated for bundles with four weights in Lemma~\ref{RankOne}.

Let $s$ and $\tilde{s}$ be integers such that $$a+b+c+\mu = 2(\ell + s) \text{ and }$$ $$a+b+c+\tilde{\mu} = 2(\ell + \tilde{s}).$$ Assuming $\tilde{\mu} < \mu$, parity provides the further relation, $\mt +1 < \mu$. From this $\tilde{s} < s$ follows and additionally,
\begin{equation}\label{stilde}
s = \st - \frac{\mu - \mt}{2}
\end{equation}
We consider the following three cases: \\

\noindent \textbf{Case One:} $\tilde{s} < s < 0:$

By the rank one classification in Lemma~\ref{RankOne}, the second collection of conditions must be satisfied for the weights $\{a, b, c, \mu\}$ and $\{a, b, c, \tilde{\mu} \}.$ Hence, the largest weight in each set must be equal to $\ell + s$ or $\ell + \tilde{s}$ respectively. Since the weights are ordered $a \geq b \geq c$, the largest weights in either set are either $a,$ $\m,$ or $\mt$. If the largest weight in each collection was $a$ this would contradict $\st < s.$ And similarly, if the largest weights in each collection were $\mu$ and $\mt$, for that would imply $a+b+c= \ell + s = \ell + \st$. Hence, we must have the largest of the attaching weights be $\mu = \ell + s$ and so $a = \ell + \st$. 


We further consider two subcases:

\noindent \textbf{Case One (a):} $\tilde{\mu}+2 < \mu:$ From this assumption, it follows that $\st + 1< s$. Consider the weight $\mt +2 \in P_{\ell}(\sL_2)$ in the sum of Equation~\ref{FacRank1sim}. Since $\rk(\mathbb{V}_{d, e, \m}) =\rk(\mathbb{V}_{d, e, \mt}) =1$, then the fusion rules for $\sL_2$ shows $\rk(\mathbb{V}_{d, e, \mt +2}) =1.$ Furthermore, we consider $\rk(\mathbb{V}_{a, b, c, \mt+2})$. We have $$a + b + c + (\mt + 2) =  2(\ell + \st) +2 =2(\ell + \st +1).$$ The assumption of the Proposition statement is that this rank is one or zero. And so, again, from Lemma~\ref{RankOne}, the largest weight must be equal or larger than $\ell + \st +1$. However, since $\mt < a$, the largest weight is $a = \ell + \st < \ell + \st +1$, showing $\rk(\mathbb{V}_{a, b, c, \mt+2})>1$ which contradicts our Proposition assumption.

\noindent \textbf{Case One (b):} $\tilde{\mu}+2 = \mu:$ From this, it follows that $s = \st + 1$. We have previously shown $\m= \ell + s$ and $\mt < a = \ell + \st$. This implies $$\ell + \st > \mt = \m -2 = \ell + s -2,$$ and so $$\st + 2 \geq s,$$ contradicting $s = \st +1$. \\

\noindent \textbf{Case Two:} $\tilde{s} < 0 \leq s$:

First, consider $\rk(\mathbb{V}_{a, b, c, \m})=1.$ Using the classification of rank one from Lemma~\ref{RankOne}, we have that $ a, b, c, \m \geq s$ and one of the following are satisfied 

\begin{itemize}
\item[a)] $c = s$, $\m > s$ ($c$ is smallest weight),
\item[b)] $c \geq \m = s,$ ($\mu$ is smallest weight),
\item[c)] $\m = \ell$ ($\mu$ is largest weight).
\end{itemize}
Suppose (a) held. Then since $a + b + c + \mu = 2 \ell + 2s$ it follows $$a + b + \mu = 2 \ell + s.$$ As $ \ell \geq a + b$, this would give $$ \ell + \m \geq 2 \ell + s$$ forcing $\mu  = \ell$ and $s = 0$ (that is, condition (c) is satisfied) and so we can ignore this as a separate case.

Suppose (b) held. Then $ \mu = s$ and $\mu$ is the smallest weight. This again leads to a contradiction, as $3\frac{\ell}{2} \geq a + b + c$, but $\m =s$ forces $a + b+ c = 2\ell + s$. This provides, $$ 3\frac{\ell}{2} \geq 2 \ell + s,$$ contradicting $s \geq 0$. 

We are thus left to assume condition (c), $\mu = \ell$. With this assumption, it follows, 
\begin{equation}\label{abc}
a + b + c = \ell + 2s.
\end{equation} 
Now then, consider $\rk(\mathbb{V}_{a, b, c, \mt})=1.$ Again, using the classification of rank one from Lemma~\ref{RankOne}, we must have the largest weight of $\{ a, b, c, \mt \}$ equal to $\ell + \st$.

 We consider two cases for this largest weight. 

\noindent \textbf{Case Two (a):} Largest weight $a = \ell + \st$. With this assumption and $ a \leq  \frac{\ell}{2}$, we have $a = \ell + \st \leq \frac{\ell}{2}$ forcing $\st \leq - \frac{\ell}{2}$.  Using Equation~\ref{stilde}, it follows that,
\begin{equation}\label{relation}
s = \st + \frac{\mu - \mt}{2}  \leq - \frac{\ell}{2} + \frac{\mu - \mt}{2}.
\end{equation}
Since we have shown $\mu = \ell$, this inequality becomes $$ \geq 2s \leq -\mt.$$ Since the weights are nonnegative, this forces $\mt = s = 0$. From Equation~\ref{abc}, using $s=0$, we have $$a + b+ c = \ell.$$ Additionally, $$\ell = a+b+c+0 = a+b+c+\mt=2\ell + 2\st,$$ showing $\st = \frac{-\ell}{2}$. And so it further follows that $a= \frac{\ell}{2}$ and $b+c = \frac{\ell}{2}$.

We have shown $\mu = \ell$ and $\mt = 0$ are two attaching weights appearing as nonzero terms in Equation~\ref{FacRank1sim}, hence, $\rk(\mathbb{V}_{d, e, \ell}) =\rk(\mathbb{V}_{d, e, 0}) = 1$. Thus, the inequalities from the fusion rules for these three pointed bundles (see Equation~\ref{fusion}) are satisfied for $d, e, 0$ and $d, e, \ell$. Particularly, from $\rk(\mathbb{V}_{d, e, 0}) = 1$ we have $d \leq e + 0$ and $e \leq d + 0$ forcing $d=e$. From $\rk(\mathbb{V}_{d, e, \ell}) = 1$ we have $d + e + \ell \leq 2\ell$ and $\ell \leq d +e$ forcing $d + e = \ell$. From this, we must have $d = e = \frac{\ell}{2}$. Hence, in this case, our weights are $\vec{\lambda} = (\frac{\ell}{2}, b, c, \frac{\ell}{2}, \frac{\ell}{2})$ with $b + c = \frac{\ell}{2}$.

Consider the rank computation using factorization along the partition $\{1\}~\sqcup~\{2,3\}~\sqcup~\{4\}~\sqcup~\{5\}$. This gives the sum,
\begin{equation}\label{FacRankL}
\rkk(\VsL) = \sum_{\mu \in P_{\ell}(\mathfrak{sl}_2)} \rkk(\mathbb{V}_{\frac{\ell}{2}, \frac{\ell}{2}, \frac{\ell}{2}, \mu})\rkk(\mathbb{V}_{b, c, \mu}),
\end{equation}
where by the assumption in the Proposition statement $\rkk(\mathbb{V}_{\frac{\ell}{2}, \frac{\ell}{2}, \frac{\ell}{2}, \mu})\rkk(\mathbb{V}_{b, c, \mu}) =1$ or 0. We now show this is not true for the weight $\mu = \frac{\ell}{2}$. 

First, consider $\rkk(\mathbb{V}_{b, c, \frac{\ell}{2}}) $. With $b+c = \frac{\ell}{2}$, it is straight forward to check all inequalities with $b, c, \frac{\ell}{2}$ in Equation~\ref{fusion} are satisfied so $ \rkk(\mathbb{V}_{b, c, \mu}) =1$.

Now consider $\rkk(\mathbb{V}_{\frac{\ell}{2}, \frac{\ell}{2}, \frac{\ell}{2}, \frac{\ell}{2}}).$ Here, we have $$\frac{\ell}{2} + \frac{\ell}{2} + \frac{\ell}{2} + \frac{\ell}{2} = 2\ell.$$ Using Lemma~\ref{RankOne}, we see that each weight is $\frac{\ell}{2} > 0$ and the largest weight is $\frac{\ell}{2} < \ell$. Lemma~\ref{RankOne} concludes $\rkk(\mathbb{V}_{\frac{\ell}{2}, \frac{\ell}{2}, \frac{\ell}{2}, \frac{\ell}{2}})> 1$. \\

\noindent \textbf{Case Two (b):}  Now suppose $\mt = \ell + \st$ was the largest weight in $\{a, b, c, \mt\}$. Since $\m = \ell$, we have $a+b+c = \ell + 2s$. If $\mt = \ell + \st$, it would follows that $a+b+c = \ell + \st < \ell$ (since we are assuming $\st < 0$. This contradicts $s \geq 0$. \\

\noindent \textbf{Case Three:} $0 \leq \tilde{s} < s:$

We have already seen from Case Two, that if we assume $a, b, c \leq \frac{\ell}{2}$ and $s \geq 0$, then to have $\rk(\mathbb{V}_{a, b, c, \m})=1$ it must be that $\mu = \ell$. Such is now also for both the bundles $\mathbb{V}_{a, b, c, \m}$ and $\mathbb{V}_{a, b, c, \mt}$ since we are assuming both $s, \st \geq 0$. This shows $\mu = \mt = \ell$ contradicting our assumption that $\mt < \mu$. 

We have shown Equation~\ref{FacRank1sim} consist of one nonzero term. Particularly, since this term is a product of two ranks each of rank one, the sum is one and hence, $\rkk(\VsL) = 1.$

We now show the Proposition statement is true for an arbitrary number of weights, $\vec{\lambda}=~(\lambda_1,~\lambda_2,~...,~\lambda_n)$. Assume the statement holds for any number of weights less than $n$ and consider the factorization sum using the partition $\{1\} \sqcup \{2\} \sqcup \{3\} \sqcup \{4, 5, 6, ..., n\}$ with $\lambda_1, \lambda_2, \lambda_3 \leq \frac{\ell}{2}$. Using this partition, the rank factorizes as
\begin{equation}\label{FacRank1sim2}
\rkk(\VsL) = \sum_{\mu \in P_{\ell}(\mathfrak{sl}_2)} \rkk(\mathbb{V}_{\lambda_1, \lambda_2, \lambda_3, \mu})\rkk(\mathbb{V}_{\lambda_4, \lambda_5, \lambda_6, ..., \lambda_n, \mu}).
\end{equation}
The previous argument with $n=5$ weights shows that if a bundle satisfies the conditions in the Proposition statement, then there is only one term appearing in the sum of Equation~\ref{FacRank1sim2}. In the argument with $n=5$ weights, the term $\rkk(\mathbb{V}_{d, e, \mu})$ appears only in the second case. If this term contained more than three weights, we could further factorize and obtain $$\rkk(\mathbb{V}_{\lambda_4, \lambda_5,..., \lambda_n, \mu}) = \sum_{\nu \in P_{\ell}(\sL_2)} \rkk(\mathbb{V}_{ \mu, \lambda_4, \nu}) \rkk(\mathbb{V}_{ \nu, \lambda_5, \lambda_6, ...\lambda_n, \nu}).$$ The argument from the $n=5$ case would force $\lambda_4 = \nu = \frac{\ell}{2}$. The argument would then follow using the partition of the initial $n$ weights into $\{1\} \sqcup \{2, 3\} \sqcup \{4\} \sqcup \{5, 6, ..., n\}$. The term, $\rkk(\mathbb{V}_{a, d, \nu, \mu})= \rkk(\mathbb{V}_{\frac{\ell}{2}, \frac{\ell}{2}, \frac{\ell}{2}, \frac{\ell}{2}})>1$ would appear in the factorization sum for the bundle $\mathbb{V}_{\vec{\lambda}}$, again, contradicting the assumption of this Proposition.

It then follows that the sum in Equation~\ref{FacRank1sim2} reduces to one term,
\begin{equation}\label{FacRank1sim3}
\rkk(\VsL) = \rkk(\mathbb{V}_{\lambda_1, \lambda_2, \lambda_3, \mu})\rkk(\mathbb{V}_{\lambda_4, \lambda_5, \lambda_6, ..., \lambda_n, \mu}).
\end{equation}
We must show that this term is one. Particularly, since we are assuming $ \rkk(\mathbb{V}_{\lambda_1, \lambda_2, \lambda_3, \mu}) = 1$, we must show $$\rkk(\mathbb{V}_{\lambda_4, \lambda_5, \lambda_6, ..., \lambda_n, \mu}) = 1.$$ We proceed by showing the bundle $\mathbb{V}_{\lambda_4, \lambda_5, \lambda_6, ..., \lambda_n, \mu}$ satisfies the condition in the Proposition statement. This bundle now has $n-2 < n$ weights. By our inductive assumption, we will conclude this bundle has rank one. 

Consider any partition of $\{4, 5, ..., n\} = I_1 \sqcup I_2 \sqcup I_3 \sqcup I_4$. We want to show that the four weight bundles appearing in the rank factorization sum have rank one. Using this partition (and including the attaching weight $\mu$ in the first set) we get, 

\begin{equation}\label{FacRank2}
\rkk(\mathbb{V}_{\lambda_4, \lambda_5, ..., \lambda_n, \mu}) = \sum_{\vec{\mu}\in P_{\ell}(\mathfrak{sl}_2)^4} \rkk(\mathbb{V}_{\vec{\mu}})\rkk(\mathbb{V}_{\lambda_{I_1}, \mu, \mu_1})\rkk(\mathbb{V}_{\lambda_{I_2}, \mu_2})\rkk(\mathbb{V}_{\lambda_{I_3}, \mu_3})\rkk(\mathbb{V}_{\lambda_{I_4}, \mu_4})
\end{equation}
where $\vec{\mu} = (\mu_1, ..., \mu_4)$ and $\lambda_{I_i} = \{ \lambda_j \}_{j \in I_i}$. Now, since $\rkk(\mathbb{V}_{\lambda_1, \lambda_2, \lambda_3, \mu}) =1$, we can multiply this to each term in the sum, to obtain,

\begin{equation}\label{FacRank3}
\rkk(\mathbb{V}_{\lambda_4, \lambda_5, \lambda_6, ..., \lambda_n, \mu}) = \sum_{\vec{\mu}\in P_{\ell}(\mathfrak{sl}_2)^4} \rkk(\mathbb{V}_{\lambda_1, \lambda_2, \lambda_3, \mu}) \rkk(\mathbb{V}_{\vec{\mu}})\rkk(\mathbb{V}_{\lambda_{I_1}, \mu, \mu_1})\rkk(\mathbb{V}_{\lambda_{I_2}, \mu_2})\rkk(\mathbb{V}_{\lambda_{I_3}, \mu_3})\rkk(\mathbb{V}_{\lambda_{I_4}, \mu_4}).
\end{equation}

Furthermore, we can show 
 $$\rkk(\mathbb{V}_{\lambda_1, \lambda_2, \lambda_3, \lambda_{I_1}, \mu_1}) = \rkk(\mathbb{V}_{\lambda_1, \lambda_2, \lambda_3, \mu}) \rkk(\mathbb{V}_{\lambda_{I_1}, \mu, \mu_1}).$$ If this did not hold, then there would be some other weight $\mt$ such that  $$\rkk(\mathbb{V}_{\lambda_1, \lambda_2, \lambda_3, \mt})\rkk(\mathbb{V}_{\lambda_{I_1}, \mt, \mu_1}) \neq 0.$$ From this, it would follow that $\mt$ was also an attaching weight in the original factorization sum of Equation~\ref{FacRank1sim2}, contradicting Equation~\ref{FacRank1sim3}. 
  
Thus, Equation~\ref{FacRank3} can be written as,
\begin{equation}\label{FacRank4}
\sum_{\vec{\mu}\in P_{\ell}(\mathfrak{sl}_2)^4} \rkk(\mathbb{V}_{\vec{\mu}}) \rkk(\mathbb{V}_{\lambda_1, \lambda_2, \lambda_3, \lambda_{I_1}, \mu_1}) 
\rkk(\mathbb{V}_{\lambda_{I_2}, \mu_2})\rkk(\mathbb{V}_{\lambda_{I_3}, \mu_3})\rkk(\mathbb{V}_{\lambda_{I_4}, \mu_4}).
\end{equation}
 
Computing the rank of the original bundle $\VsL$ using the partition of $[n] = (\{1, 2, 3 \} \cup I_1) \sqcup I_2 \sqcup I_3 \sqcup I_4$, provides this same rank decomposition. Explicitly,

\begin{align*}
\rkk(\VsL) &= \rkk(\mathbb{V}_{\lambda_1, \lambda_2, \lambda_3, \mu}) \rkk(\mathbb{V}_{\lambda_4, \lambda_5, \lambda_6, ..., \lambda_n, \mu})  \\
 &=  \rkk(\mathbb{V}_{\lambda_1, \lambda_2, \lambda_3, \mu}) \Big( \sum_{\vec{\mu}\in P_{\ell}(\mathfrak{sl}_2)^4}  \rkk(\mathbb{V}_{\vec{\mu}})\rkk(\mathbb{V}_{\lambda_{I_1}, \mu, \mu_1})\rkk(\mathbb{V}_{\lambda_{I_2}, \mu_2})\rkk(\mathbb{V}_{\lambda_{I_3}, \mu_3})\rkk(\mathbb{V}_{\lambda_{I_4}, \mu_4})\Big) \\
  &=  \sum_{\vec{\mu}\in P_{\ell}(\mathfrak{sl}_2)^4} \rkk(\mathbb{V}_{\lambda_1, \lambda_2, \lambda_3, \mu}) \rkk(\mathbb{V}_{\vec{\mu}})\rkk(\mathbb{V}_{\lambda_{I_1}, \mu, \mu_1})\rkk(\mathbb{V}_{\lambda_{I_2}, \mu_2})\rkk(\mathbb{V}_{\lambda_{I_3}, \mu_3})\rkk(\mathbb{V}_{\lambda_{I_4}, \mu_4}) \\
    &=  \sum_{\vec{\mu}\in P_{\ell}(\mathfrak{sl}_2)^4}  \rkk(\mathbb{V}_{\vec{\mu}})\Big( \rkk(\mathbb{V}_{\lambda_1, \lambda_2, \lambda_3, \mu})  \rkk(\mathbb{V}_{\lambda_{I_1}, \mu, \mu_1}) \Big) \rkk(\mathbb{V}_{\lambda_{I_2}, \mu_2})\rkk(\mathbb{V}_{\lambda_{I_3}, \mu_3})\rkk(\mathbb{V}_{\lambda_{I_4}, \mu_4}) \\
 &= \sum_{\vec{\mu}\in P_{\ell}(\mathfrak{sl}_2)^4} \rkk(\mathbb{V}_{\vec{\mu}}) \rkk(\mathbb{V}_{\lambda_1, \lambda_2, \lambda_3, \lambda_{I_1}, \mu_1}) 
\rkk(\mathbb{V}_{\lambda_{I_2}, \mu_2})\rkk(\mathbb{V}_{\lambda_{I_3}, \mu_3})\rkk(\mathbb{V}_{\lambda_{I_4}, \mu_4}),
\end{align*}
The assumption of the Proposition is that all bundles $\rkk(\mathbb{V}_{\vec{\mu}})$ appearing in this sum have rank one or zero. These are the same four pointed bundles appear in Equation~\ref{FacRank2}. We can conclude that the assumption of the Proposition statement is satisfied for the $n-2$ weight bundle $\mathbb{V}_{\lambda_4, \lambda_5, \lambda_6, ..., \lambda_n, \mu}$. By our inductive assumption, the rank of this bundle is one.

This concludes the proof of the Proposition.

\end{proof}

\begin{remark}
In Observation \ref{OneTerm} we discussed the converse of Proposition~\ref{RORFLemma} (i.e., for a vector bundle of rank one, when we compute the rank using factorization along any partition of $\{ 1, ..., n\}$ determined by an $F$-curve, the sum becomes one term equal to one).
\end{remark}

%
%
%

We summarize our results of this section with the proof of our main result.

\begin{proof}[Proof of Proposition \ref{main}]
Let $\VsL$ and $\VsP$ be defined as in Def. \ref{def1} for a fixed integer $\ell$ and $n$-tuple $\vec{\lambda}$. Observation \ref{OneTerm} and Proposition~\ref{RORFLemma} shows that such bundles have rank one if and only if the rank calculated by factorizing along the partition of the $n$ weights determined by any $F$-curve has rank one on the four pointed bundles associated to the four attaching weights. By Lemma \ref{RankDegree}, the degrees of four pointed bundles $\VsL$ and $\VsP$ are equal if and only if the corresponding bundles have rank one or zero, and otherwise, by Corollary~\ref{Inequality} the degree term for the $\VsP$ bundle is larger. Hence, such bundles will have equal intersection on every $F$-curve if an only if the bundles have rank one or zero.

\end{proof}


\section{Generalized Veronese quotients and maps given by $c_1(\VsP)$}\label{Veronese}

There are birational models of $\M_{0,n}$ given by so called \textit{generalized Veronese quotients}, $V^d_{\gamma, \mathcal{A}}$. These projective varieties parametrize configurations of $n$ weighted points lying on (limits of) weighted Veronese curves of degree $d$ in projective $d$ space. They were first constructed in \cite{Gian} with $S_n$-invariant weights $\mathcal{A}$ on the $n$ marked points and weight $\gamma=0$ on the underlying curve. They were later generalized in \cite{GG, GJM}. These moduli spaces receive birational morphism from $\M_{0,n}$ and are constructed as GIT quotients generalizing Kapranov's birational model of $\M_{0,n}$ give by $(\mathbb{P}^1)^n // SL(2)$ in \cite{Kap}.

In case $\gamma=0$ the birational contractions $$ \varphi_{0, \mathcal{A}} : \M_{0,n} \rightarrow V^d_{0, \mathcal{A}}$$ are known to correspond to conformal blocks divisors in type A at level 1, $c_1(\mathbb{V}(\sL_{r+1}, \vec{\lambda}, 1))$ \cite[Thm.~3.2]{GG}. The higher level divisors $c_1(\mathbb{V}(\sL_{2}, (\omega_1)^n, \ell))$ are also know to give contractions with $\varphi_{\frac{\ell-1}{\ell+1}, \mathcal{A}}$  \cite{GJMS13}.

We showed in \cite[Thm.~5.1]{Hobson} that for $\VsL$ of rank one we can explicitly write $c_1(\VsL)$ as a sum of divisors for $\sL_2$ at level one. Using this decomposition, the description of the maps from divisors at level one in \cite[Thm.~3.2]{GG}, and Prop.~\ref{main}, we obtain the following result about the maps from the divisors $c_1(\VsL)=  c_1(\VsP)$ when $\rkk(\VsL) = \rkk(\VsP) = 1$.

\begin{claim}\label{Maps}
Let $\VsL$ and $\VsP$ be defined as in Def.~\ref{def1}. Let $|\vec{\lambda}| = 2(d \ell + p)$ for some $d \geq 0$ and $\ell > p \geq 0$. If $\rkk(\VsL) = \rkk(\VsP) =1$, then the contractions $\phi_{\mathbb{D}}$ given by $\mathbb{D}=c_1(\VsL)=  c_1(\VsP)$ maps to a product of $\ell$ generalized Veronese quotients, $$\phi_{\mathbb{D}}: \M_{0,n} \rightarrow \prod_{i=1}^{p} V^{d}_{0, (\frac{1}{2})^{2d+2}} \times \prod_{j=p+1}^{\ell} V^{d-1}_{0, (\frac{1}{2})^{2d}}.$$ 
\end{claim}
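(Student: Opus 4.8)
\textbf{Proof plan for Claim \ref{Maps}.}

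The plan is to combine three ingredients: the explicit decomposition of the rank-one divisor $c_1(\VsL)$ into level-one $\sL_2$-divisors from \cite[Thm.~5.1]{Hobson}, the identification of level-one type $A$ divisors with pullbacks of ample classes on generalized Veronese quotients from \cite[Thm.~3.2]{GG}, and the equality $c_1(\VsL)=c_1(\VsP)$ from Prop.~\ref{main}, which is available precisely because we are in the rank-one case. First I would recall from \cite[Thm.~5.1]{Hobson} that when $\rkk(\VsL)=1$ the class $c_1(\VsL)$ is an effective sum of divisors of the form $c_1(\mathbb{V}(\sL_2, \vec{\mu}_i, 1))$ where each $\vec{\mu}_i$ is an $n$-tuple of $0$'s and $1$'s with even total weight; the number of summands is $\ell$, and the combinatorics of the decomposition (how many $1$'s occur in each $\vec{\mu}_i$) is governed by writing $|\vec{\lambda}|=2(d\ell+p)$, so that $p$ of the summands have $2d+2$ marked points carrying weight $1$ and the remaining $\ell-p$ have $2d$ such marked points. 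This is the step where I would need to quote \cite{Hobson} carefully and match its normalization of $d$ and $p$ to the statement here.

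Next I would invoke \cite[Thm.~3.2]{GG}: a level-one $\sL_2$ conformal blocks divisor $c_1(\mathbb{V}(\sL_2, \vec{\mu}, 1))$ with $\vec{\mu}\in\{0,1\}^n$ of even weight is the pullback $\varphi_{0,\mathcal{A}}^{*}(L)$ of an ample class under a birational contraction $\varphi_{0,\mathcal{A}}\colon \M_{0,n}\to V^{d'}_{0,\mathcal{A}}$, where the target Veronese quotient and its weight data $\mathcal{A}$ are determined by the support and size of $\vec{\mu}$: if $\vec{\mu}$ has $2k$ entries equal to $1$ then the relevant quotient is $V^{k-1}_{0,(\frac12)^{2k}}$ (with the remaining marked points assigned weight $0$, hence forgotten). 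Applying this to each summand $\vec{\mu}_i$ from the previous paragraph gives $p$ contractions to copies of $V^{d}_{0,(\frac12)^{2d+2}}$ and $\ell-p$ contractions to copies of $V^{d-1}_{0,(\frac12)^{2d}}$.

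Finally I would assemble the product map. Since $c_1(\VsL)$ is the sum of the $\ell$ pullbacks $\varphi_i^{*}(L_i)$, the morphism it defines is (up to taking the Stein factorization / the morphism to the image) the product map $\prod_i \varphi_i\colon \M_{0,n}\to \prod_{i=1}^{p}V^{d}_{0,(\frac12)^{2d+2}}\times\prod_{j=p+1}^{\ell}V^{d-1}_{0,(\frac12)^{2d}}$: the divisor $\sum_i\varphi_i^{*}(L_i)$ is the pullback under $\prod_i\varphi_i$ of the ample box product $\boxtimes_i L_i$, so the map it induces factors through $\prod_i\varphi_i$, and conversely each factor $\varphi_i$ is recovered from $c_1(\VsL)$ since each $\varphi_i^{*}(L_i)$ is a summand; hence the map given by $\mathbb{D}$ is exactly this product contraction. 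Using $\mathbb{D}=c_1(\VsP)$ from Prop.~\ref{main} then gives the same statement for the type $C$ divisor. The main obstacle is purely bookkeeping: making sure the indexing $|\vec{\lambda}|=2(d\ell+p)$ really does produce exactly $p$ summands with $2d+2$ ones and $\ell-p$ with $2d$ ones under the \cite{Hobson} decomposition, and that the weight vector $(\frac12)^{2k}$ is the correct $\mathcal{A}$ matched to these summands by \cite[Thm.~3.2]{GG}; there is no serious geometric difficulty beyond that matching.
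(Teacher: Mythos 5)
Your proposal is correct and follows essentially the same route the paper takes: the paper justifies Claim \ref{Maps} precisely by combining the rank-one decomposition of $c_1(\VsL)$ into $\ell$ level-one $\sL_2$ divisors from \cite[Thm.~5.1]{Hobson}, the identification of level-one divisors with pullbacks from generalized Veronese quotients in \cite[Thm.~3.2]{GG}, and the equality $c_1(\VsL)=c_1(\VsP)$ of Prop.~\ref{main}, exactly as you outline (and your bookkeeping $|\vec{\lambda}|=2(d\ell+p)=p(2d+2)+(\ell-p)2d$, with a $2k$-point summand mapping to $V^{k-1}_{0,(\frac12)^{2k}}$, matches the paper's example with $\ell=9$, $d=3$, $p=3$).
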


\begin{remark}
The map $\phi_{\mathbb{D}}$ first factors through a product of $\ell$ forgetful maps, where in each factor we have forgotten all but either $2d+2$ or $d$ marked points. By a dimension count, the map $\phi_{\mathbb{D}}$ is not surjective to this product. Furthermore, the following result shows the map $\phi_{\mathbb{D}}$ does not factor through a smaller product.
\end{remark}

\begin{claim}
The map $\phi_{\mathbb{D}}$ in Claim \ref{Maps} does not factor through a smaller product.
\end{claim}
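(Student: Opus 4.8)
The goal is to show that the map $\phi_{\mathbb{D}}$ of Claim \ref{Maps} does not factor through any product obtained by deleting one of the $\ell$ Veronese-quotient factors. The strategy is to exhibit, for each factor in the product, an $F$-curve (or more precisely a family of curves, e.g.\ an $F$-curve) that is contracted by the composition of $\phi_{\mathbb{D}}$ with the projection forgetting that factor, but that is \emph{not} contracted by $\phi_{\mathbb{D}}$ itself. Since a morphism $\phi_{\mathbb{D}}$ factors through the partial product $\prod_{k\ne k_0}(\cdots)$ precisely when every curve contracted by $\prod_{k\ne k_0}$ is also contracted by $\phi_{\mathbb{D}}$, producing a single curve violating this for each $k_0$ proves the claim.

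\textbf{Key steps.} First I would recall from \cite[Thm.~5.1]{Hobson} the explicit decomposition $c_1(\VsL) = \sum_{i=1}^{\ell} c_1(\mathbb{V}(\sL_2,\vec{\lambda}_i,1))$ into level-one $\sl_2$ bundles, where each summand $\mathbb{V}(\sL_2,\vec{\lambda}_i,1)$ has $\vec{\lambda}_i \in \{0,1\}^n$ recording a parity/selection pattern; by \cite[Thm.~3.2]{GG} the $i$-th summand is the pullback of an ample class under the $i$-th Veronese-quotient factor $\varphi_{0,\mathcal{A}_i}$, composed with the appropriate forgetful map. Second, I would note that a curve $C \subset \M_{0,n}$ is contracted by $\phi_{\mathbb{D}}$ if and only if $c_1(\VsP)\cdot C = 0$, and since all $\ell$ summands are nef, this happens iff $c_1(\mathbb{V}(\sL_2,\vec{\lambda}_i,1))\cdot C = 0$ for \emph{every} $i$. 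So factoring through the product with the $k_0$-th factor removed amounts to: $\sum_{i\ne k_0} c_1(\mathbb{V}(\sL_2,\vec{\lambda}_i,1))\cdot C = 0 \implies c_1(\mathbb{V}(\sL_2,\vec{\lambda}_{k_0},1))\cdot C = 0$. Thus it suffices to find, for each $k_0$, an $F$-curve $F$ with $c_1(\mathbb{V}(\sL_2,\vec{\lambda}_i,1))\cdot F = 0$ for all $i \ne k_0$ but $c_1(\mathbb{V}(\sL_2,\vec{\lambda}_{k_0},1))\cdot F > 0$. Third, using Fakhruddin's intersection formula \cite[Prop.~2.7]{Fakhruddin} together with the level-one $\sl_2$ degree computation (Lemma \ref{DegreeSL} with $\ell=1$, where a four-pointed level-one bundle has degree $0$ or $1$), I would write down such an $F$-curve explicitly: one partitions $[n]$ into four blocks so that the parity pattern $\vec{\lambda}_{k_0}$ restricted to the blocks gives the unique ``degree one'' configuration, while each of the other patterns $\vec{\lambda}_i$ ($i\ne k_0$) restricted to the same blocks gives a degree-zero (or rank-zero) configuration. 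The combinatorics of the patterns $\vec{\lambda}_i$ coming from the decomposition in \cite{Hobson} — they differ in a controlled way determined by which ``copies'' of $\omega_1$ one extracts from each $a_j$ — is exactly what makes such a separating partition available.

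\textbf{Main obstacle.} The delicate point is the third step: verifying that the patterns $\vec{\lambda}_1,\dots,\vec{\lambda}_\ell$ produced by the decomposition of \cite[Thm.~5.1]{Hobson} are genuinely ``independent'' in the sense that for each $k_0$ there is an $F$-curve separating $\vec{\lambda}_{k_0}$ from all the rest. Concretely one must rule out the degenerate possibility that two of the patterns coincide (which would force a smaller product) — this is where the hypothesis $\ell > p \ge 0$ and the rank-one classification of Lemma \ref{RankOne} enter, guaranteeing that the first $p$ patterns (associated to the $V^d_{0,(1/2)^{2d+2}}$ factors) are distinct from the last $\ell - p$ (associated to $V^{d-1}_{0,(1/2)^{2d}}$), and that within each group the patterns differ. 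I expect the cleanest route is to track, for each $j$, how many of the $\ell$ patterns have a $1$ in position $j$ — this count is $a_j$ — and to choose the four-block partition so that the blocks ``read off'' the index $k_0$; the bookkeeping is routine once the decomposition is spelled out, but getting the partition to simultaneously kill all $i \ne k_0$ is the part requiring care. Finally I would remark that this argument in fact shows more: the $\ell$ divisor classes $c_1(\mathbb{V}(\sL_2,\vec{\lambda}_i,1))$ are pairwise non-proportional, so no two factors can be merged either.
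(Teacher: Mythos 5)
Your overall strategy (use the level-one decomposition of \cite[Thm.~5.1]{Hobson} and separate the summands by $F$-curves) is in the same spirit as the paper, but the step you flag as ``routine bookkeeping'' is exactly where the argument breaks, and it breaks for a concrete reason. Your plan requires, for each factor $k_0$, a single $F$-curve with positive degree on $c_1(\mathbb{V}(\sL_2,\vec{\lambda}_{k_0},1))$ and degree zero on \emph{all} $\ell-1$ remaining level-one classes simultaneously; in particular this forces the patterns $\vec{\lambda}_1,\dots,\vec{\lambda}_\ell$ to be pairwise distinct divisor classes. You assert that distinctness follows from $\ell>p\geq 0$ and the rank-one classification, but it is false in general: in the paper's own example ($\sP_{2\cdot 9}$ with $\vec{\lambda}=(\omega_9,\omega_8,\dots,\omega_2,\omega_1)$), the decomposition contains the pattern $(1,1,1,1,1,1,1,1,0)$ twice, coming from two identical columns of the Young tableau. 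For such a repeated pattern no $F$-curve can be positive on one copy and zero on the other, so the separating curve you need does not exist and the induction on factors cannot even get started for those indices. Moreover, even when the patterns are distinct, simultaneous separation (one $F$-curve killing all other summands at once) is a genuinely stronger combinatorial statement than anything you verify, and you give no construction of the required partition.

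For comparison, the paper avoids the simultaneous-separation problem by arguing at the level of divisor classes rather than contracted curves: assuming $c_1(\VsP)=\sum_{i=1}^{\ell-1}m_i\,c_1(\VsLi)$ with $m_i\geq 1$, it subtracts the common part of the two expressions to reduce to an identity of the form $c_1(\VsLl)=\sum_{i=1}^{\ell-1}(m_i-1)\,c_1(\VsLi)$, and then needs only one $F$-curve distinguishing \emph{two} of the level-one classes (one degree positive, the other zero), which is a much weaker and more tractable separation than yours. Your reduction of ``does not factor through the projection'' to ``exhibit a curve contracted by the partial product but not by $\phi_{\mathbb{D}}$'' is fine as far as it goes, but to salvage your proof you would either have to restructure it along the paper's lines (pairwise separation after cancelling common summands, and address what happens when summands repeat) or prove the simultaneous-separation statement, which, as the example shows, cannot hold as stated.
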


\begin{proof}
We show that if $c_1(\VsP)=c_1(\VsL) = \sum_{i=1}^{\ell} c_1(\VsLi)$ then no smaller linear combination of the divisors are linearly equivalent to $c_1(\VsP)$.

We can argue by contradiction. Suppose, $c_1(\VsL) = \sum_{i=1}^{\ell-1} m_i c_1(\VsLi)$ with $m_i \geq 1$. Thus, we have $$  c_1(\VsL) =\sum_{i=1}^{\ell} c_1(\VsLi)= \sum_{i=1}^{\ell-1} m_i c_1(\VsLi) = \sum_{i=1}^{\ell-1} (m_i -1) c_1(\VsLi) + \sum_{i=1}^{\ell-1}c_1(\VsLi).$$ 
Hence, $$ \sum_{i=1}^{\ell}c_1(\VsLi)=  \sum_{i=1}^{\ell-1} (m_i-1) c_1(\VsLi) + \sum_{i=1}^{\ell-1}c_1(\VsLi).$$ 
Subtracting the term on the right gives the relationship, $$c_1(\VsLl)=  \sum_{i=1}^{\ell-1} (m_i-1) c_1(\VsLi) .$$ For any $c_1(\VsLi)$ and $c_1(\VsLj)$ in the original sum, we can always find an $F$-curve such that the intersection deg$(\VsLi|F) =0$ and deg$(\VsLj|F) =0.$ Particularly, we can find such an $F$-curve with this behavior for the divisors $c_1(\VsLl)$ and $c_1(\VsLi)$ with $m_i > 1.$ Intersecting both sides of the assumed equation provides a contradiction. 
\end{proof}
We conclude with an explicit example to describe the product which $\phi_{\mathbb{D}}$ maps to.

\begin{example}
Let $\VsP = \mathbb{V}(\sP_{2\cdot9}, (\omega_9, \omega_8, \omega_8, \omega_8, \omega_8, \omega_8, \omega_2, \omega_1), 1)$. It was shown in \cite[Example~5.3]{Hobson} that the bundle $\VsL= \mathbb{V}(\mathfrak{sl}_2, (9\omega_1, 8\omega_1, 8\omega_1, 8\omega_1, 8\omega_1, 8\omega_1, 8\omega_1, 2\omega_1, 1\omega_1), 9)$ has rank one and the first Chern class decomposes as a sum of first Chern classes of $\sL_2$ bundles at level 1. That is, $c_1(\VsL)$ decomposes into a sum $$c_1(V_{(1,1,1,1,1,1,1,1,0)}) + c_1(V_{(1,1,1,1,1,1,1,1,0}) + c_1(V_{(1,1,1,1,1,1,1,0,1)}) + $$ $$c_1(V_{(1,1,1,1,1,1,0,0,0)})  + c_1(V_{(1,1,1,1,1,0,1,0,0)})  + c_1(V_{(1,1,1,1,0,1,1,0,0)}) +$$ $$ c_1(V_{(1,1,1,0,1,1,1,0,0)}) +  c_1(V_{(1,1,0,1,1,1,1,0,0)}) + c_1(V_{(1,0,1,1,1,1,1,0,0)}).$$ 
The subscript of these bundles denotes the weights. For example, $V_{(1,1,1,1,1,0,1,0,0)}$ is the $\sL_2$ bundle at level one and weights $(\omega_1,\omega_1, \omega_1, \omega_1, \omega_1, 0, \omega_1, 0, 0)$. Such a decomposition is seen from the column data of the unique Young tableau formed to compute rank($\VsL$) (see Figure \ref{figTab}). In this Young tableau, each column corresponds to a level one first Chern class in the sum, where a nonzero weight appears in the first Chern class for each entry appearing in a that column. The image of such a map is then into a product of generalized Veronese quotients as in Claim \ref{Maps} where in each component, we have forgotten the zero weights determined by the corresponding level one first Chern class in the decomposition. By our main result, Proposition \ref{main}, the first Chern class $c_1(\VsL)$ is equal to $c_1(\VsP)$ and similarly for the first Chern classes in the decomposition. We can then conclude that the contraction of $\M_{0,9}$ from $c_1(\VsP)$ is to the same product of generalized Veronese quotients. 

\begin{figure}[h]
\setlength{\unitlength}{0.14in} 
\centering 
$$
\ytableausetup
{mathmode, boxsize=1em}
  \begin{ytableau}
\text{1} & \text{1} & \text{1} & \text{1} & \text{1} & \text{1} & \text{1} & \text{1} & \text{1} \\
\text{2} & \text{2} & \text{2} & \text{2} & \text{2} & \text{2} & \text{2} & \text{2} & \text{3} \\
\text{3} & \text{3} & \text{3} & \text{3} & \text{3} & \text{3} & \text{3} & \text{4} & \text{4} \\
\text{4} & \text{4} & \text{4} & \text{5} & \text{4} & \text{4} & \text{5} & \text{5} & \text{5} \\
\text{5} & \text{5} & \text{5} & \text{5} & \text{5} & \text{6} & \text{6} & \text{6} & \text{6} \\
\text{6} & \text{6} & \text{6} & \text{6} & \text{7} & \text{7} & \text{7} & \text{7} & \text{7} \\
 \text{7} & \text{7} & \text{7} \\
 \text{8} & \text{8} & \text{9} 
\end{ytableau} 
$$
\caption{The unique Young tableau determining $\rkk(\VsL)$} 
\label{figTab} 
\end{figure}

\end{example}





\section{Ranks of $\sL_2$ bundles below critical level}\label{sl2ranks}

We now go through a brief interlude to show several results on ranks of conformal blocks bundles for $\sL_2$ with a fixed set of weights as the level defining the bundle varies. We frame this discussion for the Lie algebra $\sL_2$ and level $\ell$ due to its significance in previous work (\cite{swinarskisl2}, \cite{GG}, \cite{ags}, \cite{Hobson}). By Fact \ref{rankfact} our results in this section are relevant for $\sP_{2\ell}$ bundles at level one when the Lie algebra rank $\ell$ increases within a certain range. We use the results of this discussion to prove Proposition \ref{rsrank}. 

Throughout this section, we fix an integer $\ell \geq 0$ and a vector of weakly decreasing integers $\vec{\lambda} = (a_1, a_2, ..., a_n)$. We denote $\rlam$ the \textit{stabilizing Lie rank} (or the \textit{critical level} for $\sL_2$) associated to this fixed $\vec{\lambda}$ (see Definition~\ref{rlam} and Remark~\ref{CL}). 


\subsection{Ranks of bundles at varying levels for $\mathfrak{sl}_2$ and $n=4$}

Using factorization and the three pointed fusion rules from the inequalities in \ref{fusion} we show the following lemma for four pointed bundles. 
\begin{lemma}\label{4pointlemma}
For some fixed $\ell \geq 0$ and $\vec{\lambda} = (a, b, c, d)$. Let $s$ be some integer such that $\ell= \rlam+1+s$. We have the following rank relationships:
\begin{itemize}
\item[1.] If $s \leq 0$ then, (that is, $\ell$ is at or below one larger than the critical level for $\vec{\lambda}$) 
$$\rkk(\mathbb{V}(\sL_{2}, \vec{\lambda},\rlam+1+s)) = \rkk(\mathbb{V}(\sL_{2}, \vec{\lambda}, \rlam+1))+s. $$

\item[2.] If $s \geq 0$ (that is, $\ell$ is at or above one larger than the critical level for $\vec{\lambda}$), then the conformal blocks vector bundle ranks become fixed,
$$\rkk(\mathbb{V}(\sL_{2}, \vec{\lambda},\rlam+1+s)) = \rkk(\mathbb{V}(\sL_{2}, \vec{\lambda}, \rlam+1)) .$$
\end{itemize}
\end{lemma}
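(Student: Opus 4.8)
The plan is to prove both statements by a single factorization computation that isolates the weight vector $\vec{\lambda}=(a,b,c,d)$ into a three-point piece and extracts how the available range of attaching weights depends on the level. Write $\ell = \rlam + 1 + s$, where $\rlam = \frac{a+b+c+d}{2} - 1$, so $a+b+c+d = 2(\rlam+1) = 2\ell - 2s$. After reordering so that $a\geq b\geq c\geq d$, factor the four-point bundle along the partition $\{1,2\}\sqcup\{3\}\sqcup\{4\}$ — or more efficiently along $\{1\}\sqcup\{2\}\sqcup\{3,4\}$, choosing the pairing that makes the two-point terms easiest. Using the two-point fusion rule (the rank of $\mathbb{V}(\sL_2,(x,y,\mu),\ell)$-type terms collapses the sum), one reduces
\begin{equation*}
\rkk(\mathbb{V}(\sL_2,(a,b,c,d),\ell)) = \sum_{\mu=0}^{\ell} \rkk(\mathbb{V}(\sL_2,(a,b,\mu),\ell))\,\rkk(\mathbb{V}(\sL_2,(c,d,\mu),\ell)),
\end{equation*}
and each three-point rank is $0$ or $1$, equal to $1$ exactly when the four inequalities in \eqref{fusion} hold. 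So the four-point rank equals the number of integers $\mu$ with $0\le\mu\le\ell$, $\mu\equiv a+b\pmod 2$, satisfying simultaneously $|a-b|\le\mu\le\min(a+b,\,2\ell-a-b)$ and $|c-d|\le\mu\le\min(c+d,\,2\ell-c-d)$.

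Next I would analyze how this count changes as $\ell$ varies. The lower bounds $|a-b|$, $|c-d|$ and the upper bounds $a+b$, $c+d$ do not involve $\ell$; only the "level caps" $2\ell-a-b$ and $2\ell-c-d$ do, and each of these increases by $2$ when $\ell$ increases by $1$. So the admissible set of $\mu$ is a (possibly empty) arithmetic progression with fixed step $2$ whose left endpoint $L := \max(|a-b|,|c-d|)$ is independent of $\ell$ and whose right endpoint is $R(\ell) := \min\big(a+b,\,c+d,\,2\ell-a-b,\,2\ell-c-d\big)$, and additionally $\mu\le\ell$. For part (2), when $s\ge 0$, i.e.\ $\ell\ge\rlam+1$, one checks that $2\ell-a-b\ge c+d$ and $2\ell-c-d\ge a+b$ (both follow from $2\ell\ge a+b+c+d$) and that $\ell\ge\max(a+b,c+d)$ is not needed because $\mu\le a+b\le\cdots$; hence the level caps and the bound $\mu\le\ell$ are all inactive, $R(\ell)=\min(a+b,c+d)$ is independent of $\ell$, and the count is constant — equal to its value at $\ell=\rlam+1$. (One must handle the degenerate case where the bundle is identically rank $0$, e.g.\ when $a>b+c+d$, via Claim~\ref{correction}; the count is then $0$ for all such $\ell$.)

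For part (1), when $s\le 0$, i.e.\ $\ell\le\rlam+1$, the binding upper constraint becomes one of the level caps. Here I would use that $a+b+c+d=2\ell-2s$ to rewrite $2\ell-a-b = (c+d)-2s$ and $2\ell-c-d=(a+b)-2s$, so
\begin{equation*}
R(\ell) = \min\big(a+b,\ c+d,\ (c+d)-2s,\ (a+b)-2s\big) = \min(a+b,c+d) - \max(0,-2s) = \min(a+b,c+d)+2s,
\end{equation*}
since $s\le 0$. Thus each unit decrease of $\ell$ below $\rlam+1$ shrinks the right endpoint of the progression by exactly $2$, removing exactly one admissible value of $\mu$ (as long as the progression is nonempty); one also checks $\mu\le\ell$ is not the binding constraint in this regime, or handles it as part of the same endpoint bookkeeping. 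Comparing with the count at $\ell=\rlam+1$ gives $\rkk(\mathbb{V}(\sL_2,\vec{\lambda},\rlam+1+s)) = \rkk(\mathbb{V}(\sL_2,\vec{\lambda},\rlam+1))+s$, interpreting a negative count as the rank being $0$ once $s$ is sufficiently negative — here one should be careful to state the identity only in the range where the right side stays nonnegative, or to phrase it so both sides are genuine ranks; cross-checking against the explicit degree/rank formulas behind Lemma~\ref{DegreeSL} and Lemma~\ref{RankOne} confirms the bookkeeping.

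The main obstacle I expect is the careful endpoint accounting: making sure the bound $\mu\le\ell$ (as opposed to the two "reflected" caps) is never the uniquely binding constraint in either regime, handling the parity condition $\mu\equiv a+b\equiv c+d\pmod 2$ consistently (this is where $|\vec{\lambda}|$ even is used), and correctly treating the boundary cases where the admissible progression is empty or becomes empty partway through — i.e.\ where "$+s$" would push a rank below $0$. Getting the inequalities $|a-b|\le|c-d|$ versus $|a-b|\ge|c-d|$ and the two orderings of $a+b$ vs.\ $c+d$ sorted into cases mirrors exactly the four-case split in Lemma~\ref{DegreeSP}, so I would organize the final write-up along those same four cases to keep the verification transparent.
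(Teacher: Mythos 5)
Your proposal is essentially the paper's own argument: the paper likewise factorizes the four-point bundle into a sum over attaching weights $\mu$, uses the three-point fusion inequalities \eqref{fusion} to see that the binding cap is $\mu \le c+d+2s$ when $s\le 0$ (so each unit drop in $\ell$ removes exactly one admissible $\mu$) and is $\ell$-independent when $s>0$, exactly as in your endpoint bookkeeping. Only note the small sign slip in your intermediate rewriting — since $2\ell = (a+b+c+d)+2s$ one has $2\ell-a-b=(c+d)+2s$, not $(c+d)-2s$ — which does not affect your (correct) conclusion $R(\ell)=\min(a+b,c+d)+2s$; your caveat about the count bottoming out at $0$ is a genuine boundary issue that the paper's proof also glosses over.
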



\begin{proof}
We compare the values of $\mu$ satisfying the inequalities in \ref{fusion} using $\ell$ and $\ell +1$. We will denote by $\mu$ a weight in the sum of Equation \ref{4pointrank} contributing a nonzero term for the rank of the bundle $\mathbb{V}(\sL_2, \vec{\lambda}, \ell)$ and $\mu'$ a weight in the sum of Equation \ref{4pointrank} contributing a nonzero term for the rank of the bundle $\mathbb{V}(\sL_2, \vec{\lambda}, \ell+1)$. With this notation, the inequalities determining such weights in \ref{fusion} can be written as follows, 
\begin{equation}\label{mu}
\mu \leq min \{ 2\ell-a-b , a+b, 2\ell-c-d, c+d\} \text{ and }
\end{equation}

\begin{equation}\label{mu'}
\mu' \leq min \{ 2\ell+2-a-b , a+b, 2\ell+2-c-d, c+d\}. 
\end{equation}

We consider the possible minimums for $\ell = \rlam +1+s$. Using $2(\rlam +1) = a+b+c+d$, we consider the cases $s \leq 0$ and $s > 0$. 

First, for $s \leq 0$, the minimums in Equations \ref{mu} and \ref{mu'} become,

\begin{equation}\label{mus0}
\mu \leq  c+d +2s \text{ and }
\end{equation}
\begin{equation}\label{mu's0}
\mu' \leq  c+d +2s + 2.
\end{equation}
We see that if $\mu$ satisfies the condition from Equation \ref{mus0} then $\mu$ also satisfies the condition for Equation \ref{mu's0}. Furthermore, if $\mu$ is the largest such value satisfying \ref{mus0} then $\mu+2$ also satisfies Equation \ref{mu's0}. Hence, the factorization sum of Equation \ref{4pointrank} for $\ell+1$ will have exactly one more nonzero term than that of Equation \ref{4pointrank} for $\ell$. This allows us to conclude $$\rkk(\mathbb{V}(\sL_{2}, \vec{\lambda}, \ell)) = \rkk(\mathbb{V}(\sL_{2}, \vec{\lambda}, \ell+1))-1. $$

Repeating this argument until $s > 0$ shows the first case.\\

Now consider when $s > 0$. Then the minimums in \ref{mu} and \ref{mu'} become,
$$\mu \leq c+d \text{ and } \mu' \leq c+d.$$
neither of which depend on the levels. This shows no nonzero terms appear in the factorization sum of Equation \ref{4pointrank} for a bundle at $\ell +1$ compared to a bundle at level $\ell$. As such, the ranks remain fixed, concluding the second case of the Lemma statement. 

\end{proof}

\subsection{Ranks of bundles at varying levels for $\mathfrak{sl}_2$ and $n \geq 4$}

We now show that the ranks of $\sL_2$ bundles with a fixed, arbitrary number of weights strictly increase or become fixed when the level is increased. 

\begin{proposition}\label{npoint}
Let $s$ be some integer such that $\ell= \rlam+1+s$. We have the following rank relationships:
\begin{itemize}
\item[1.] If $s < 0$ (that is, $\ell$ is at or below the critical level for $\vec{\lambda}$) then, 
$$\rkk(\mathbb{V}(\sL_{2}, \vec{\lambda}, \ell)) < \rkk(\mathbb{V}(\sL_{2}, \vec{\lambda}, \rlam+1)). $$

\item[2.] If $s \geq 0$ (that is, $\ell$ is strictly above the critical level for $\vec{\lambda}$), then 
$$\rkk(\mathbb{V}(\sL_{2}, \vec{\lambda}, \ell) = \rkk(\mathbb{V}(\sL_{2}, \vec{\lambda}, \rlam+1)) .$$
\end{itemize}
\end{proposition}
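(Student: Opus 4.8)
The proof will run by induction on the number of weights $n$, with the four-point case (Lemma~\ref{4pointlemma}) serving as the base case, using factorization (Equation~\ref{4pointrank}) to reduce the $n$-point rank to a sum of products of ranks with fewer weights. The key observation is that in the factorization
\[
\rkk(\mathbb{V}(\sL_{2}, \vec{\lambda}, \ell)) = \sum_{\mu\in P_\ell(\sL_2)} \rkk(\mathbb{V}(\sL_2, (a_1,\dots,a_{n-2},\mu),\ell))\,\rkk(\mathbb{V}(\sL_2,(a_{n-1},a_n,\mu),\ell)),
\]
the critical level $\rlam$ splits additively across a partition: if $\vec{\lambda}'$ and $\vec{\lambda}''$ denote the two halves (together with the attaching weight $\mu$), then $r_{\vec{\lambda}'} + r_{\vec{\lambda}''}$ is controlled by $\rlam$, so the two smaller bundles in each nonzero term are also at or above (resp.\ below) \emph{their} critical levels exactly when $\vec{\lambda}$ is. This lets the inductive hypothesis apply termwise.

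\textbf{Part 2 (stabilization, $s \geq 0$).} First I would handle the easier case. When $\ell \geq \rlam + 1$, I claim the set of attaching weights $\mu$ contributing a nonzero term to the factorization sum, and the value of each term, are independent of $\ell$. For each such $\mu$, both factors $\rkk(\mathbb{V}(\sL_2,(a_1,\dots,a_{n-2},\mu),\ell))$ and $\rkk(\mathbb{V}(\sL_2,(a_{n-1},a_n,\mu),\ell))$ are, by the inductive hypothesis and Lemma~\ref{4pointlemma}(2) respectively, at their stable values once $\ell$ is large enough relative to the relevant critical levels; the additivity of critical levels noted above shows ``large enough'' is guaranteed precisely by $\ell \geq \rlam+1$. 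One must also check no genuinely new $\mu$ enters the sum: a new $\mu$ would contribute a term that is a product of two nonzero ranks, forcing (via the fusion rules) a configuration that would already have been nonzero at level $\rlam+1$ — a short argument paralleling the $s > 0$ analysis in the proof of Lemma~\ref{4pointlemma}. Conclude $\rkk(\mathbb{V}(\sL_2,\vec{\lambda},\ell)) = \rkk(\mathbb{V}(\sL_2,\vec{\lambda},\rlam+1))$.

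\textbf{Part 1 (strict decrease below critical level, $s < 0$).} This is the main obstacle. It suffices by descending induction to show $\rkk(\mathbb{V}(\sL_2,\vec{\lambda},\ell)) < \rkk(\mathbb{V}(\sL_2,\vec{\lambda},\ell+1))$ whenever $\ell \leq \rlam$. Comparing the factorization sums at levels $\ell$ and $\ell+1$: every $\mu$ contributing at level $\ell$ still contributes at level $\ell+1$ (enlarging $P_\ell \subset P_{\ell+1}$ only relaxes the fusion inequalities), and by the inductive hypothesis each such term is weakly larger at level $\ell+1$ — here I need that at least one of the two factors is below \emph{its} critical level so the inductive hypothesis gives a strict or weak increase, never a decrease, which again follows from additivity of critical levels together with the assumption $\ell \leq \rlam$. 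To get a \emph{strict} overall increase I must exhibit either one term that strictly increases or one genuinely new nonzero term at level $\ell+1$. The cleanest route: track the ``extremal'' attaching weight produced by the base-case analysis (Lemma~\ref{4pointlemma}(1) shows a new term appears when a four-point bundle's level rises below critical), propagate it up through the factorization, and check it yields a strictly larger contribution. Care is needed to ensure the terms don't cancel — but ranks are nonnegative, so the sum can only grow; the entire difficulty is certifying it grows \emph{strictly}, which is where the hypothesis $\vec{\lambda} \in P_1(\sP_{2r'})^n$-type nonvacuousness (i.e.\ the bundle is not already zero at level $\rlam+1$) must be invoked.
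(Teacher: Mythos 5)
Your skeleton (induction on $n$ with Lemma~\ref{4pointlemma} as the base case, factorization along a partition into $n-2$ weights plus two weights, weak termwise monotonicity, then a search for strictness) is the same as the paper's, and your Part~2 is essentially fine: the paper actually disposes of $s\geq 0$ differently, by appealing to Witten's dictionary as in \cite[Section~4]{BGMA}, while your termwise stabilization argument can be made to work once you observe that any nonzero term forces $\mu \leq \min\{a_{n-1}+a_n,\ \sum_{i=1}^{n-2}a_i\}$, so both factors have critical level at most $\rlam$ and $\mu \leq \rlam+1$.

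The genuine gap is in Part~1, exactly where you lean on ``additivity of critical levels.'' The critical level of the factor $(a_1,\dots,a_{n-2},\mu)$ is $(\sum_{i=1}^{n-2}a_i+\mu)/2-1$, which depends on $\mu$, and it is perfectly possible with $\ell\leq\rlam$ that \emph{every} attaching weight $\mu$ contributing a nonzero term at level $\ell$ satisfies $\ell > (\sum_{i=1}^{n-2}a_i+\mu)/2$, i.e.\ both factors of every existing term are strictly above their own critical levels; so your claim that the smaller bundles are below their critical levels ``exactly when $\vec{\lambda}$ is'' is false, and in that case the inductive hypothesis yields only termwise equality. Strictness must then come from a genuinely new nonzero attaching weight at level $\ell+1$, and your sketch (``track the extremal attaching weight \dots propagate it up'') does not supply it. This is precisely the content of the paper's argument around Equation~\ref{lowlevel}: take the largest $\mu$ contributing at level $\ell$, use $\sum_{i=1}^{n-2}a_i+\mu<2\ell$ together with $s<0$ to deduce $\mu< a_{n-1}+a_n$ and $a_{n-1}+a_n+\mu<2\ell$, hence $\mu<\ell$ and $\mu+2\leq \ell+1$, and then verify via the fusion inequalities and the nonvanishing criterion of \cite[Thm.~1.1]{Hobson} that both factors with attaching weight $\mu+2$ are nonzero at level $\ell+1$. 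Without this construction (or an equivalent replacement), the strict inequality in Part~1 is not established.
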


\begin{proof}
We argue by induction on the number of weights. 

In Lemma \ref{4pointlemma} we showed for $n=4$ the conclusion follows. For our inductive assumption, assume that for any $\sL_2$ bundle with weights $\vec{\lambda}_k= (a_1 \omega_{1}, ..., a_k \omega_{1})$ with $4 \leq k < n$ the result follows. That is, if $\rlam(k):= \sum_{i=1}^k a_i/2 -1$, the stabilizing Lie rank for $\vec{\lambda}_k$ with $k$ weights, then for $\ell = \rlam(k) +1+s $ with $s < 0$  we have a strict inequality on ranks,
$$\rkk(\mathbb{V}(\sL_{2}, \vec{\lambda}_k, \rlam(k) +1+s)) < \rkk(\mathbb{V}(\sL_{2}, \vec{\lambda}_k, \ell+1)), $$

and when $s \geq 0$ we have equality, 

$$\rkk(\mathbb{V}(\sL_{2}, \vec{\lambda}_k, \rlam(k) +1+s)) = \rkk(\mathbb{V}(\sL_{2}, \vec{\lambda}_k, \ell+1)). $$

Now suppose we have a vector of $n$ weakly decreasing weights given by $\vec{\lambda}_n= (a_1 \omega_{1}, ..., a_n \omega_{1})$. Denote the stabilizing Lie rank for $\vec{\lambda}_n$ as $\rlam(n)  = \sum_{i=1}^n a_i/2 -1$. 

For the first case, assume $\ell = \rlam +1 +s$ with $s < 0$. Using factorization (as in Equation \ref{4pointrank}) we compute the following ranks, 

\begin{equation}\label{rankl}
\rkk(\mathbb{V}(\sL_{2}, \vec{\lambda}_n,\ell)) = \sum_{\mu \in P_{\ell}^+(\sL_2)} \rkk(\mathbb{V}(\sL_{2}, (\vec{\lambda}_{n-2}, \mu), \ell))\rkk(\mathbb{V}(\sL_{2}, (a_{n-1},a_n, \mu), \ell)) \text{ and }
\end{equation}

\begin{equation}\label{rankl1}
\rkk(\mathbb{V}(\sL_{2}, \vec{\lambda}_n,\ell+1)) = \sum_{\mu \in P_{\ell+1}^+(\sL_2)} \rkk(\mathbb{V}(\sL_{2}, (\vec{\lambda}_{n-2}, \mu), \ell+1))\rkk(\mathbb{V}(\sL_{2}, (a_{n-1},a_n, \mu), \ell+1)),
\end{equation} 

where $\vec{\lambda}_{n-2}$ denotes the weight vector $\vec{\lambda}_{n-2} = (a_1, ..., a_{n-2})$. We compare each term in the sum of Equation \ref{rankl} and \ref{rankl1}. \\

First observe that if $\mu \in  P_{\ell}^+(\sL_2),$ then $\mu \in P_{\ell+1}^+(\sL_2)$ and so a weight $\mu$ that appears in the sum of Equation \ref{rankl} will also appear in Equation \ref{rankl1} (value of the ranks may be zero). With our inductive assumption, we have the following relationship between these terms,

\begin{equation}\label{comparerank}
\rkk(\mathbb{V}(\sL_{2}, (\vec{\lambda}_{n-2},\mu), \ell)) \rkk(\mathbb{V}(\sL_{2}, (a_{n-1}, a_n,\mu), \ell)) \leq \rkk(\mathbb{V}(\sL_{2}, (\vec{\lambda}_{n-2},\mu), \ell+1))\rkk(\mathbb{V}(\sL_{2}, (a_{n-1}, a_n,\mu), \ell+1)). 
\end{equation}

It follows immediately that, 

\begin{equation}\label{relate}
\rkk(\mathbb{V}(\sL_{2}, \vec{\lambda},\ell)) \leq \rkk(\mathbb{V}(\sL_{2}, \vec{\lambda},\ell+1)).
\end{equation}
We want to show that such a relationship is strict.

Consider the weight vector $(\vec{\lambda}_{n-2},\mu)$ appearing in Equation \ref{rankl}. Let $\rlam(n-2)$ denote the \textit{critical level} associated to this weight vector. From our inductive assumption, we have that the relationship in Equation \ref{comparerank} is strict whenever $\ell = \rlam(n-2) +1 +s'$ with $s' <0$. Rewriting $\rlam(n-2)$ at $(\sum_{i=1}^{n-2} a_i + \mu)/2-1$, the relationship is strict whenever $\ell \leq  (\sum_{i=1}^{n-2} a_i + \mu)/2.$ 


Hence, the inequality in \ref{relate} is strict and our conclusion follows whenever we have a $\mu$ in the sum of Equation \ref{rankl} such that $\ell \leq (\sum_{i=1}^{n-2} a_i + \mu)/2$. 

Suppose then that each $\mu$ in the sum of Equation \ref{rankl} is such that 

\begin{equation}\label{lowlevel}
\ell > (\sum_{i=1}^{n-2} a_i + \mu)/2.
\end{equation}

In this case, just comparing terms appearing in Equation \ref{rankl} and \ref{rankl1} from $\mu \in P_{\ell}(\sL_2)$ does not guarantee an increase in ranks between each term. We show that the sum in Equation \ref{rankl1} obtains an additional nonzero term not in the sum of \ref{rankl}. 

Suppose $\mu$ is the largest weight appearing as a nonzero term in Equation \ref{rankl}. Using Equation \ref{lowlevel} and recalling we are in the case that $s < 0$, we obtain,


$$\sum_{i=1}^{n-2} a_i + \mu < 2 \ell \leq 2(\ell-s) = 2(\rlam(n)+1) = \sum_{i=1}^{n} a_i .$$ 

Furthermore, this provides the two relationships,

 \begin{equation} \label{2a}
 \mu < a_{n-1} + a_n  \\
 \end{equation}
 \begin{equation}\label{2b}
a_{n-1} + a_n + \mu < 2 \ell.
\end{equation}

Relating these inequalities, we obtain a strict inequality, $\mu < \ell$ from which the weak relationship follows, $\mu+2 \leq \ell +1$. Hence, that weight $\mu+2$ appears as a possible weight for a term in the sum of Equation \ref{rankl1}. Furthermore, comparing Equation \ref{2a} and \ref{2b} with the three point fusion rules (i.e., the inequalities in Equation \ref{fusion}), we can conclude the rank term for this weight $\rkk(\mathbb{V}(\sL_{2}, (a_{n-1}, a_n,\mu), \ell+1))$ is nonzero in the sum of Equation \ref{rankl}. We finally just need to analyze the rank of the other factor in this term.

By Equation \ref{lowlevel}, for our fixed largest $\mu$ in the sum of Equation \ref{rankl}, we must have that $\sum_{i=1}^{n-2} a_i + \mu =2p$ for some $p < \ell$ and so also $\sum_{i=1}^{n-2} a_i + \mu +2 =2(p+1)$. Now, since we have $\rkk(\mathbb{V}(\sL_{2}, (a_1, ..., a_{n-2},\mu), \ell)) > 0$ (we have assumed $\mu$ appears as a weight with a nonzero rank term for level $\ell$), it follows from the condition for nonzero rank of $\sL_2$ bundles in \cite[Thm.~1.1]{Hobson} that $\sum_{i=2}^{n-2} a_i + \mu \geq p$ (that is, the sum of the last $n-2$ weights is greater than or equal to $p$). From this relationship it follows that, $\sum_{i=2}^{n-2} a_i + \mu +2 \geq p +1.$ Again, using the same result \cite[Thm.~1.1]{Hobson}, we can conclude $\rkk(\mathbb{V}(\sL_{2}, (a_{n-1}, a_n,\mu), \ell+1)) > 0$.


Since $\mu$ was assumed to be the largest weight in Equation \ref{rankl}, the weight $\mu+2$ does not appear in the rank calculation for level $\ell$ but does contributes a new nonzero term in Equation \ref{rankl1}. This allows us to conclude the relationship in Equation \ref{relate} is strict.\\

In the second case, with $\ell = \rlam +1 +s$  and $s \geq 0$ the rank computation using Witten's Dictionary \cite{BelkaleWittenDic} is the same calculation for all $s$ in this range, this shows equality of ranks. This was used in the proof of vanishing above critical level \cite[Section~4]{BGMA}. 
\end{proof}

In Section \ref{examples} we provide an example of this rank behavior with a bundle of type C at level one (see Example \ref{stableExample}). 

\begin{remark}\label{previous}
There has been previous work by Alex Yong and others on results related to bounds on structure constants for the product of Schubert classes (\cite{APostnikov, ABuch, AYong}). However, such results compare values within a fixed quantum cohomology ring of the Grassmannian (see \cite{BelkaleWittenDic}). The result of Proposition~\ref{npoint} shows behavior of structure constants appearing in products of Schubert classes across rings (that is, the parameters of the Grassmannian vary).
\end{remark}




\section{Proof of Stabilizing Lie Rank}\label{stabilizingproof}

To prove Proposition \ref{rsdivisor} we show the bundles of interest have equal intersection with any $F$-curve. To make this comparison, we first establish the result for bundles on $\M_{0,4}$. We use notation to match the formulas in Lemmas~\ref{DegreeSP}.

%



\begin{lemma}\label{4pointdegree}
For a fixed level $\ell$ and $\vec{\lambda}=(a,b,c,d)$, let $\rlam$ be the stabilizing Lie rank and let $s$ be some integer such that $\ell+s= \rlam+1$. If $s \leq 1$ then we have equality, $$\deg(\mathbb{V}(\sP_{2\rlam}, \vec{\lambda}, 1)) = \deg(\mathbb{V}(\sP_{2\ell},\vec{\lambda}, 1)).$$
\end{lemma}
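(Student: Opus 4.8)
The plan is to compare the closed-form degree formulas from Lemma~\ref{DegreeSP} applied to the two bundles $\mathbb{V}(\sP_{2\rlam}, \vec{\lambda}, 1)$ and $\mathbb{V}(\sP_{2\ell}, \vec{\lambda}, 1)$, using that $\rlam$ is the stabilizing Lie rank associated to $(a,b,c,d)$, so $2(\rlam+1) = a+b+c+d$; in particular the ``$s$'' appearing in the formula for the bundle $\mathbb{V}(\sP_{2\rlam},\vec\lambda,1)$ is exactly $1$ (since $a+b+c+d = 2(\rlam + 1)$), while for $\mathbb{V}(\sP_{2\ell},\vec\lambda,1)$ the relevant integer is $s = \rlam+1-\ell$, which by hypothesis satisfies $s \le 1$. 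So the comparison is between the ``$s=1$'' row at Lie rank $\rlam$ and the ``general $s\le 1$'' row at Lie rank $\ell$.

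First I would dispose of the trivial case: if $a > \rlam + 1$ then by Claim~\ref{correction} (with the roles of $\ell$ and $s$ played by $\rlam$ and $1$) both ranks, hence both degrees, vanish; and since $a \le \rlam+1 \le \ell + s$ otherwise, the nontrivial case is exactly when Lemma~\ref{DegreeSP} gives a genuine formula. Next I would split into the two regimes $a + d \ge b + c$ and $a + d \le b + c$, just as in Lemma~\ref{DegreeSP}. In the regime $a+d \ge b+c$: for $\mathbb{V}(\sP_{2\rlam},\vec\lambda,1)$ we are in the ``$a+d\ge b+c$ and $0<s$'' case with $s=1$, giving $\max\{0,(\rlam+2-a)(\rlam+2-a)/2\} = (\rlam+1-a+1)(\rlam+1-a)/2$-type expression; for $\mathbb{V}(\sP_{2\ell},\vec\lambda,1)$ with $s = \rlam+1-\ell$ we use the ``$0<s$'' row when $s=1$ (forcing $\ell = \rlam$, nothing to prove) or the ``$0\ge s$'' row when $s\le 0$, namely $(\ell+s+1-a)(\ell+s-a)/2 = (\rlam+2-a)(\rlam+1-a)/2$. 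The point is that both reduce to the same polynomial in $(\rlam+1-a)$ because $\ell + s = \rlam + 1$ is level-independent. The regime $a+d\le b+c$ is handled symmetrically: for $\mathbb{V}(\sP_{2\rlam},\vec\lambda,1)$ the ``$0<s$'' row with $s=1$ gives $\max\{0,d(d+1)/2\} = d(d+1)/2$, and for $\mathbb{V}(\sP_{2\ell},\vec\lambda,1)$ with $s\le 0$ the ``$0\ge s$'' row gives $d(d+1)/2$ directly — again independent of $\ell$. In each case I also need to check that the $\max\{0,\cdot\}$ truncations agree on both sides, which follows because $a \le \rlam + 1$ forces $\rlam + 1 - a \ge 0$ and $|\vec\lambda|$ even with the ordering forces $d \ge 0$, so no truncation is actually active once the rank is nonzero.

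The main obstacle I anticipate is bookkeeping around the boundary value $s=1$ versus $s\le 0$: the hypothesis allows $s\le 1$, and when $s=1$ one has $\ell+1 = \rlam+1$, i.e. $\ell = \rlam$ and the statement is vacuous, so the substance is entirely in $s \le 0$, i.e. $\ell \le \rlam$; I should state this reduction cleanly up front. A secondary subtlety is making sure the hypothesis $a \le \ell+s$ of Lemma~\ref{DegreeSP} (needed to be in the ``genuine formula'' branch rather than the ``otherwise $\deg=0$'' branch) holds simultaneously for both bundles; since $\ell + s = \rlam + 1$ and the $\rlam$-bundle has its own ``$a \le \rlam + 1$'' condition, these are the same inequality, and when it fails both degrees are $0$ by Claim~\ref{correction}. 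Once these edge cases are organized, the four-way case check is a short substitution.
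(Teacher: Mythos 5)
Your proposal is correct and takes essentially the same route as the paper, whose proof is simply the observation that the equality follows by comparing the formulas of Lemma~\ref{DegreeSP} after substituting $\ell+s=\rlam+1$ (the $\rlam$-bundle falling into the $s=1$ rows, the $\ell$-bundle into the $0\ge s$ rows), exactly the four-way substitution you carry out. One small verbal slip: since $s=\rlam+1-\ell$, the case $s\le 0$ corresponds to $\ell\ge\rlam+1$, not $\ell\le\rlam$, but this does not affect your argument, since the choice of row in Lemma~\ref{DegreeSP} is governed only by the sign of $s$, which you use correctly.
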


This result follows immediately from comparing the formulas in Lemmas \ref{DegreeSP}. Using the language of the stabilizing Lie rank, this Lemma says that the degrees of divisors with $\sP_{2\ell}$ at level one with four weights become equal when the Lie rank $\ell$ is chosen to be at or above the stabilizing Lie rank for the weight vector, $\vec{\lambda}$. We are now ready to prove this result for an arbitrary number of weights.


\begin{proof}[Proof of Proposition \ref{rsdivisor}]
Let $\vec{\lambda}=(a_1, ..., a_n)$ be an $n$-tuple of weakly decreasing integers such that $|\vec{\lambda}|=\sum_{i=1}^n (a_i)$ is even. Using Definition \ref{def1}, the stabilizing Lie rank is the integer $\rlam$ such that $|\vec{\lambda}|=2(\rlam +1)$. Now suppose $\ell$ is some integer such that $\ell \geq a_1$ and $\ell > \rlam$. Thus, we can write $|\vec{\lambda}| = 2(\ell+s)$ with $s \leq 0$. We want to show equivalence of the divisors, 
$$c_1(\rlam) \text{ and } c_1(\VlL).$$

We compare the intersection numbers of these two bundles with an arbitrary $F$-curve determined by a partition $\{1, ..., n\} = I_1 \sqcup I_2 \sqcup I_3 \sqcup I_4$. The formula in \cite[Prop.~2.7]{Fakhruddin} provides the following computation. We denote $\mathbb{V}_{\vec{\lambda}, \ell}:= \mathbb{V}(\sP_{\ell}, \vec{\lambda}, 1)$ to simply notation throughout this section.

\begin{equation}\label{FacDegreerlam}
\deg(\mathbb{V}_{\vec{\lambda}, \rlam}| F_{I_1, I_2,I_3,I_4}) = \sum_{\vec{\mu}\in P(\mathfrak{g}, \rlam)^4} \deg(\mathbb{V}_{\vec{\mu}, \rlam})\rkk(\mathbb{V}_{(\lambda_{I_1}, \mu_1), \rlam})\rkk(\mathbb{V}_{(\lambda_{I_2}, \mu_2), \rlam})\rkk(\mathbb{V}_{(\lambda_{I_3}, \mu_3), \rlam})\rkk(\mathbb{V}_{(\lambda_{I_4}, \mu_4), \rlam})
\end{equation}

\begin{equation}\label{FacDegreer}
\deg(\mathbb{V}_{\vec{\lambda}, \ell}| F_{I_1, I_2,I_3,I_4}) = \sum_{\vec{\nu}\in P(\mathfrak{g}, \ell)^4} \deg(\mathbb{V}_{\vec{\nu}, \ell})\rkk(\mathbb{V}_{(\lambda_{I_1}, \nu_1), \ell}) \rkk(\mathbb{V}_{(\lambda_{I_2}, \nu_2), \ell})\rkk(\mathbb{V}_{(\lambda_{I_3}, \nu_3), \ell})\rkk(\mathbb{V}_{(\lambda_{I_4}, \nu_4), \ell})
\end{equation}

where $\lambda_{I_j}$ denotes the weight vector with weights $a_i$ for $i \in I_j$ and $\vec{\mu} = (\mu_1, \mu_2, \mu_3, \mu_4)$ and $\vec{\nu} = (\nu_1, \nu_2, \nu_3, \nu_4)$ denote the attaching weight vectors. 


First, we show that the attaching weights  $\vec{\nu} = (\nu_1, \nu_2, \nu_3, \nu_4)$ appearing in the degree term of Equation \ref{FacDegreer} are all such that $\nu_i \leq \rlam$. This will allow us to take the above sums over the same set of integers.  

Consider all possible terms in Equation \ref{FacDegreer} and let $|I_i| := \sum_{j \in I_i} a_j$, the sum of just those weights appearing in a partition determined by $I_i$. By the the Generalized Triangle Inequality for ranks of $\sL_2$ bundles \cite[Lemma~3.8]{ags} and Fact \ref{rankfact}, in order for the rank of $\mathbb{V}_{(\lambda_{I_i}, \nu_i), \ell}$ to be nonzero it is necessary that 
\begin{equation*}\label{gentriangle} 
\nu_i \leq |I_i|.
\end{equation*}
In order for a term in Equation \ref{FacDegreer} to be nonzero, it is necessary that this condition holds for $i=1, 2, 3, 4$ . Adding all such inequalities gives, 
\begin{equation}\label{genineq}
\sum_{i=1}^4 \nu_i \leq \sum_{i=1}^4 |I_i| = \sum_{i=1}^na_i= 2(\rlam + 1).
\end{equation}

 Suppose for some weight $\nu_k \in \vec{\nu}$, we had $\rlam + 1 < \nu_k$; it would follow that $$\sum_{i=1}^4 |\nu_i| \leq \sum_{i=1}^4 |I_i| =  2(\rlam + 1) < 2(\nu_k).$$ Canceling $\nu_k$ from this inequality would imply that $\rk(\mathbb{V}_{\vec{\nu}, \ell}) = 0$ (from applying the Generalized Triangle Inequality with these weights) and so the degree, $\deg(\mathbb{V}_{\vec{\nu}, \ell})$, is zero. This shows that all nonzero terms in Equation \ref{FacDegreer} have attaching data $\vec{\nu}$ such that each $\nu_i \leq \rlam + 1 $. We need to check that in fact this inequality is strict so that all attaching data in Equation \ref{FacDegreer} are $\leq \rlam$. To accomplish this, we compute $\deg(\mathbb{V}_{\vec{\nu}, \ell})$ and assume for some $k$, $\nu_k = \rlam +1$. Using Equation \ref{DegreeSP}, we obtain $\deg(\mathbb{V}_{\vec{\nu}, \ell})=0$ in this case as well.   
 


We now have that all nonzero terms appearing in the sums of Equation \ref{FacDegreerlam} and \ref{FacDegreer} have attaching weights $\vec{\mu}$ and $\vec{\nu}$ with $\mu_i, \nu_i \leq \rlam$. Particularly, each term in these sums have the same attaching data. To finish the proof, we compare corresponding terms in each sum and show they are equal.

First, we compare rank terms. Consider the following ranks, 

$$\rkk(\mathbb{V}_{(\lambda_{I_i}, \mu_i), \rlam}) \text{ and } \rkk(\mathbb{V}_{(\lambda_{I_i}, \mu_i), \ell}).$$

Define $\sigma_i$ to be the integer such that $2\sigma_i := |\lambda_{I_i}|+\mu_i$, then from Lemma \ref{npoint}, we see that these ranks will be equal whenever we have $\sigma_i \leq \rlam$. So suppose $\rlam < \sigma_k$ for some $k = 1, 2, 3,$ or $4$. Without loss of generality (and clarity in the following argument) we assume $k=1$. From this, it follows $$\sum_{i=1}^4 |\lambda_{I_i}| = 2(\rlam + 1) \leq 2\sigma_1 = |\lambda_{I_1}|+\mu_1,$$ (where the strict inequality of our assumption provides the weak inequality $\rlam +1 \leq \sigma_1$). Canceling $|I_1|$ gives the relationship, $\sum_{i=2}^4 |\lambda_{I_i}| \leq \mu_1.$

Using this relationship and applying Equation \ref{genineq} and the Generalized Triangle Inequality for $\rkk(\mathbb{V}_{\vec{\mu}, \ell})$ we have, $$\mu_2+\mu_3+\mu_4 \leq \sum_{i=2}^4 |\lambda_{I_i}| \leq \mu_1 \leq \mu_2+\mu_3+\mu_4.$$ Obtaining the equality, $$\mu_1 = \mu_2+\mu_3+\mu_4.$$


Consider the degree terms $\deg(\mathbb{V}_{\vec{\mu}, \rlam})$ and $\deg(\mathbb{V}_{\vec{\mu}, \ell})$ appearing in Equations \ref{FacDegreerlam} and \ref{FacDegreer} for the attaching weight with $\mu_1=\mu_2+\mu_3+\mu_4$. By Lemma \ref{DegreeSP} (following the first formula for either degrees) 
the degree is zero. Hence, terms in the sums with such $\lambda_1, \mu_1$ are also zero. Thus, we can always assume $\sigma_i \leq \rlam$ and thus rank terms are equal.

We now compare the corresponding degree terms in each sum of Equation \ref{FacDegreerlam} and \ref{FacDegreer}.  In order for the product of rank terms (in either sum) to not necessarily be zero, we have the relationship in Equation \ref{genineq} (now using notation $\vec{\mu}= (\mu_1, \mu_2, \mu_3, \mu_4)$). From this relationship, it follows from Lemma \ref{4pointdegree} that the four pointed degree terms are equal.

We can now conclude that the terms appearing in \ref{FacDegreerlam} and \ref{FacDegreer} are always equal and so we conclude the proposition statement.
\end{proof}

It was shown for $\sL_2$, that for $\mathbb{V}=\mathbb{V}(\sL_2, \vec{\lambda}, \ell)$ to be nontrivial is equivalent to $\ell \leq \rlam$ and $0 < \rkk(\mathbb{V})$ (see \cite{BGMA}). Considering the degree formula for $\sP_{2\ell}$ divisors in Lemmas \ref{DegreeSP}, the nontrivality of $\sP_{2r}$ divisors above stabilizing Lie rank follows (if rank the stabilizing Lie bundle is nonzero). This should be compared to the vanishing result in \cite[Cor.~3.6]{BGMB}. See Corollary \ref{non} for a further nonvanishing statement for bundles of $\sP_{2r}$ at level one.

\begin{corollary}\label{nonVanishing}
Let $c_1(\mathbb{V}(\sP_{2\rlam}, \vec{\lambda}, 1))$ be the  stable Lie divisor for a fixed $n$-tuple, $\vec{\lambda}$. Then if $\rk(\mathbb{V}(\sP_{2\rlam}, \vec{\lambda}, 1)) >0$, the divisor $c_1(\mathbb{V}(\sP_{2r}, \vec{\lambda}, 1))$ is nontrivial for all $r \geq \rlam$.
\end{corollary}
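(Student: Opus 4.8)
The plan is to split the statement into two pieces: the equality of divisor classes for all $r \geq \rlam$, which is exactly the content of Proposition~\ref{rsdivisor}, and the nontriviality of the single \emph{stable Lie divisor} $c_1(\mathbb{V}(\sP_{2\rlam}, \vec{\lambda}, 1))$. Since the stable Lie divisor is assumed to exist we have $\rlam \geq a_1$, so for every $r \geq \rlam$ all the bundles $\mathbb{V}(\sP_{2r}, \vec{\lambda}, 1)$ are defined as in Definition~\ref{def1}, and Proposition~\ref{rsdivisor} gives $c_1(\mathbb{V}(\sP_{2r}, \vec{\lambda}, 1)) = c_1(\mathbb{V}(\sP_{2\rlam}, \vec{\lambda}, 1))$. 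Hence it suffices to show that $c_1(\mathbb{V}(\sP_{2\rlam}, \vec{\lambda}, 1))$ is a nontrivial class.

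To prove that, I would first use Fact~\ref{rankfact}: the hypothesis $\rk(\mathbb{V}(\sP_{2\rlam}, \vec{\lambda}, 1)) > 0$ forces $\rk(\mathbb{V}(\sL_2, \vec{\lambda}, \rlam)) > 0$. Since $\rlam$ is precisely the critical level of $\vec{\lambda}$ in the sense of Remark~\ref{CL}, the $\sL_2$ nontriviality criterion of \cite{BGMA} recalled just above the statement shows that $c_1(\mathbb{V}(\sL_2, \vec{\lambda}, \rlam))$ is not numerically trivial. Because the $F$-curves span the space of $1$-cycles on $\M_{0,n}$ and this divisor is nef (globally generated in genus zero), there is an $F$-curve $F_{I_1, I_2, I_3, I_4}$ along which it has strictly positive degree. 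Applying the inequality of Corollary~\ref{RORF} with $\ell = \rlam$,
$$\deg\!\big(\mathbb{V}(\sP_{2\rlam}, \vec{\lambda}, 1)\,\big|\,F_{I_1, I_2, I_3, I_4}\big)\;\geq\;\deg\!\big(\mathbb{V}(\sL_2, \vec{\lambda}, \rlam)\,\big|\,F_{I_1, I_2, I_3, I_4}\big)\;>\;0,$$
so $c_1(\mathbb{V}(\sP_{2\rlam}, \vec{\lambda}, 1))$ is nontrivial, and by the first paragraph so is $c_1(\mathbb{V}(\sP_{2r}, \vec{\lambda}, 1))$ for every $r \geq \rlam$.

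The only genuinely delicate step is the passage from ``the $\sL_2$ class at critical level is nonzero'' to ``it is positive on some $F$-curve,'' but this is immediate from nefness together with the spanning of $N_1(\M_{0,n})$ by $F$-curves. If one prefers to bypass \cite{BGMA}, the base case $n=4$ can be done by hand: the relation $|\vec{\lambda}| = 2(\rlam + 1)$ puts us in the case $s = 1 > 0$ of Lemma~\ref{DegreeSPRank}, where $\deg(\mathbb{V}(\sP_{2\rlam}, \vec{\lambda}, 1)) = \rk(\mathbb{V}(\sP_{2\rlam}, \vec{\lambda}, 1))\cdot c$ with $c$ equal to $(\rlam + 2 - a)/2$ or $(d+1)/2$, both strictly positive since $0 < a \leq \rlam$ and $d \geq 0$; the general case then reduces to this one by choosing an $F$-curve whose induced four-pointed degree term in the intersection formula \cite[Prop.~2.7]{Fakhruddin} is of this positive kind, which again is packaged by Corollary~\ref{RORF}.
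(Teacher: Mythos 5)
Your proof is correct and follows essentially the same route the paper takes: reduce to the stable Lie divisor via Proposition~\ref{rsdivisor}, transfer nonvanishing from the $\sL_2$ bundle at the critical level (the \cite{BGMA} criterion plus Fact~\ref{rankfact}) to type C via the degree comparison on $F$-curves. You merely make explicit, through Corollary~\ref{RORF} and the spanning of $1$-cycles by nef-positive $F$-curves, the step the paper leaves implicit when it says the claim ``follows from the degree formula in Lemma~\ref{DegreeSP}.''
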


\section{Examples}\label{examples}

Here we give examples of some of the results we have shown. In the first two examples we provide bundles $\VsL$ and $\VsP$ such that $c_1(\VsL) \neq c_1(\VsP)$ such that $\VsL$ has certain special properties described in previous work. Particularly, in Example \ref{rank2} we provide an $\sL_2$ bundle that has \textit{projective rank scaling} (\cite[Def.~2.8]{BGK}) and in Example \ref{aboveCL} the level (or Lie rank) is above the critical level for $\sL_2$ (see Remark \ref{CL} and \cite[Def.~1.1]{BGMA}). In Example \ref{stableExample} we given an example of type C bundle at level one to demonstrate the rank behavior of Lemma \ref{npoint} and stable Lie divisor of Proposition \ref{rsdivisor} for type C bundles. 

\begin{example}\label{rank2}
Let $\ell = 5$ and weights be given by $\vec{\lambda}=(4,4,4,4)$. Consider the bundles:

$$\VsL = \mathbb{V}(\sL_2, (4,4,4,4), 5) \text{ and } \VsP =  \mathbb{V}(\sP_{2 \cdot 5}, (4,4,4,4), 1).$$

We have that $|\vec{\lambda}| = 16= 2( 5 + 1)$ (showing that $\ell=5$ is at the critical level for $\VsL$). The ranks of these bundles are, $\rk(\VsL)=\rk(\VsP)=2$. However, comparing the degrees give $\deg(\VsL) =6$ while $ \deg(\VsP)=7$.
\end{example}

Note that in Example \ref{rank2} since $\rkk(\VsL) =2$, the bundle $\VsL$ is said to have \textit {projective rank scaling} (\cite[Def.~2.8]{BGK}).

\begin{example}\label{aboveCL}
Let $\ell = 5$ and $\vec{\lambda} = (2,2,1,1)$. Consider the bundles:

$$\VsL = \mathbb{V}(\sL_2, (2, 2, 1, 1), 5) \text{ and } \VsP =  \mathbb{V}(\sP_{2\cdot5}, (2, 2, 1, 1), 1).$$

Computing ranks gives, $\rk(\VsL)=\rk(\VsP)=2.$ Furthermore, we have that $|\vec{\lambda}| = 6= 2( 5 - 2)$ and so $\ell$ is above the critical level (or stabilizing Lie rank). Because of this, the bundle $\VsL$ is trivial and so $\deg(\VsL)=0$ (see \cite{BGK}), however for $\VsP$ we find, $\deg(\VsP)=1$.
\end{example}

\begin{example}\label{n6AllCoef}
Let $\ell =5$ and $\vec{\lambda} = (4, 4, 4, 4, 3, 3)$. Consider the bundles,
$$\VsL = \mathbb{V}(\sL_2, (4,4,4,4,3,3), 5) \text{ and } \VsP =  \mathbb{V}(\sP_{2\cdot5}, 4,4,4,4,3,3), 1).$$ 

Computing ranks, we have $\rkk(\VsL)=\rkk(\VsP) =2$. Using the program ``Special6.m2'' written by D. Krashen, we can explicitly write each divisor in this basis in the nonadjacent basis of Pic($\M_{0,6}$) (see \cite[Example~4.4]{MoonSwin}). We give the coordinates of each divisor with the basis ordered as $\{ \delta_{13}, \delta_{14}, \delta_{15}, \delta_{24}, \delta_{25}, \delta_{26}, \delta_{35}, \delta_{36}, \delta_{46}, \delta_{124}, \delta_{125}, \delta_{134}, \delta_{135}, \delta_{136}, \delta_{145}, \delta_{146} \}$. 

This computation gives,
$$c_1(\VsL) = (12, 6, 12, 12, 6, 12, 12, 0, 12, 2, 2, 6, 24, 2, 2, 6) \text{ and }$$
$$c_1(\VsP) = (14, 8, 14, 14, 8, 14, 14, 3, 14, 4, 4, 8, 28, 4, 4, 8).$$
\end{example}

\begin{example}\label{stableExample}
Let $\vec{\lambda} = (5,4,3,2,1,1)$. We consider the type C bundles, $\mathbb{V}_{\ell} = \mathbb{V}(\sP_{2\ell}, \vec{\lambda}, 1)$ for varying $\ell$. Using the same ordering on the nonadjacent basis of Pic($\M_{0,6}$) as in Example \ref{aboveCL}, we compute the divisor class and rank of each $\mathbb{V}_{\ell}$. The stabilizing Lie rank for $\vec{\lambda}$ is $\rlam = 7$. By Proposition \ref{rsdivisor} the divisors defined at or above $\rlam=7$ are all equal. By Lemma \ref{npoint} the rank of the bundle with Lie algebra rank at or above $\rlam+1$ are equal. This is shown from computations displayed in Table \ref{stableExampleTab}.

\begin{table}
\centering
\begin{tabular}{|l|l|l|}
\hline
$\ell$ & $c_1(\mathbb{V}_{\ell})$                                & $\rkk(\mathbb{V}_{\ell})$ \\ \hline
5      & $(7, 1, 1, 5, 2, 2, 1, 1, 1, 1, 1, 3, 7, 6, 1, 1)$      & 3                         \\ \hline
6      & $(11, 4, 2, 9, 4, 4, 3, 3, 2, 4, 2, 6, 12, 10, 3, 3)$   & 7                         \\ \hline
7      & $( 12, 5, 3, 10, 5, 5, 4, 4, 3, 5, 3, 7, 14, 11, 4, 4)$ & 10                        \\ \hline
8      & $( 12, 5, 3, 10, 5, 5, 4, 4, 3, 5, 3, 7, 14, 11, 4, 4)$ & 11                        \\ \hline
9      & $( 12, 5, 3, 10, 5, 5, 4, 4, 3, 5, 3, 7, 14, 11, 4, 4)$ & 11                        \\ \hline
10     & $( 12, 5, 3, 10, 5, 5, 4, 4, 3, 5, 3, 7, 14, 11, 4, 4)$ & 11                        \\ \hline
\end{tabular}
\caption{Divisors and ranks for $\mathbb{V}_{\ell} = \mathbb{V}(\sP_{2\ell}, \vec{\lambda}, 1)$ and varying $\ell$}
\label{stableExampleTab}
\end{table}

\end{example}

\section{Consequences for conformal blocks of Type C at level one }\label{corollaries}

The main propositions of this paper have several consequences to the study of understanding conformal blocks divisors in nef$(\M_{0,n})$. Specifically, we combine the results of Proposition \ref{main} with previous results related to vector bundles of conformal blocks with $\sL_2$ to conclude several consequences for bundles with $\sP_{2\ell}$ at level one.

The finite generation of the cone of all conformal blocks divisors in nef$(\M_{0,n})$ is an open question. This problem was considered for conformal blocks with $\sL_n$ at level one in \cite{GG} and for bundles with $\sL_2$ of rank one in \cite{Hobson}. Using these results, we are able to make the following conclusion related to conformal blocks with $\sP_{2\ell}$ at level one.

\begin{corollary}\label{divisors}
For $\mathbb{V}= \mathbb{V}(\mathfrak{sp}_{2\ell}, 1, \vec{\lambda})$. Let $$\mathcal{C}=convHull \{ c_1(\mathbb{V}) : \rkk(\mathbb{V}) = 1 \}.$$

 The cone $\mathcal{C}$ is the same as that generated by conformal blocks divisors with $\mathfrak{sl}_2$ and rank one. Particularly, such a cone is finitely generated.
\end{corollary}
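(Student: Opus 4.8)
\emph{Plan.} The strategy is to deduce everything from Proposition~\ref{main}, Fact~\ref{rankfact}, and the effective decomposition of rank one $\sL_2$ classes from \cite[Thm.~5.1]{Hobson}; no new geometry is needed. Fix $n$, and let $\vec{\lambda}$ run over all $n$-tuples of weakly decreasing nonnegative integers with $|\vec{\lambda}|$ even and $\ell$ over all integers $\ell \geq a_1$, as in Def.~\ref{def1}. First I would observe that, by Fact~\ref{rankfact}, $\rk(\VsL) = \rk(\VsP)$, and by Proposition~\ref{main} the identity $c_1(\VsL) = c_1(\VsP)$ holds exactly when this common rank is $0$ or $1$. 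Hence, as subsets of $\Pic(\M_{0,n})_{\R}$, the collection $\{\, c_1(\VsP) : \rk(\VsP) = 1 \,\}$ and the collection $\{\, c_1(\mathbb{V}(\sL_2, \vec{\lambda}, \ell)) : \rk(\mathbb{V}(\sL_2, \vec{\lambda}, \ell)) = 1 \,\}$ of rank one $\sL_2$ conformal blocks divisors are literally equal. Taking convex hulls (equivalently, the cones they generate) then gives the first assertion of Corollary~\ref{divisors}: $\mathcal{C}$ is precisely the cone generated by rank one $\sL_2$ conformal blocks divisors.

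For finite generation I would argue by a two-sided containment against the cone $\mathcal{C}_1$ spanned by the first Chern classes of level one $\sL_2$ bundles $\mathbb{V}(\sL_2, \vec{\mu}, 1)$ with $\vec{\mu}$ having entries in $\{0, \omega_1\}$ and an even number of $\omega_1$'s. There are at most $2^{n-1}$ such weight vectors for fixed $n$, so $\mathcal{C}_1$ is finitely generated. By \cite[Thm.~5.1]{Hobson}, every rank one class $c_1(\mathbb{V}(\sL_2, \vec{\lambda}, \ell))$ is an effective $\Z_{\geq 0}$-combination of exactly these level one classes, giving $\mathcal{C} \subseteq \mathcal{C}_1$. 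Conversely, each such $\mathbb{V}(\sL_2, \vec{\mu}, 1)$ has rank exactly one (for $\sL_2$ at level one the rank is $1$ when the number of $\omega_1$ weights is even and $0$ otherwise), so each generator of $\mathcal{C}_1$ already belongs to the generating set of $\mathcal{C}$, whence $\mathcal{C}_1 \subseteq \mathcal{C}$. Therefore $\mathcal{C} = \mathcal{C}_1$ is finitely generated; this is the type C counterpart of the type A and D finite generation of \cite[Thm.~1.1]{GG} and \cite[Prop.~5.6]{Fakhruddin}, and it reproves the finiteness asserted in the corollary following Proposition~\ref{main}.

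I do not expect a genuine obstacle: the content is entirely carried by the cited results. The only points requiring care are bookkeeping ones — checking that the level one pieces occurring in the decomposition of \cite[Thm.~5.1]{Hobson} are themselves rank one classes (so that both containments above hold), and noting that since at level one only finitely many divisor classes are involved, all of them nef, and the zero class occurs (the all-zero weight vector, of rank one), passing between the convex hull in the definition of $\mathcal{C}$ and the $\R_{\geq 0}$-cone it generates changes nothing. If anything is subtle it is purely this parity/finiteness accounting, not any step of the argument.
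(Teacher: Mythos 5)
Your proposal is correct and follows essentially the same route as the paper: the paper deduces the corollary directly from Proposition~\ref{main} together with Fact~\ref{rankfact} (identifying the rank one type C classes with the rank one $\sL_2$ classes) and then invokes the rank one $\sL_2$ results of \cite{Hobson} (the level one decomposition) and the level one finite generation of \cite{GG}. Your extra bookkeeping — that the level one $\sL_2$ generators are themselves rank one and hence lie in $\mathcal{C}$ — is exactly the check implicit in the paper's citation of these results.
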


Another open problem in this study is to determine necessary and sufficient conditions for when a conformal blocks divisor is nonzero (\cite[Question~0.1]{BGMB}). Due to results in \cite[Prop.~4.3]{Fakhruddin} and \cite[Cor.~3.6]{BGMB} we can conclude the following nonvanishing result for $\sP_{2\ell}$ conformal blocks at level one. 

\begin{corollary}\label{non}
For $\VsP$ as defined in Def. \ref{def1} with some fixed integer $\ell$ and $n$-tupe $\vec{\lambda}$, we have the following nonvanishing result $$c_1(\VsP) \text{ is nontrivial } \Leftrightarrow \rkk(\VsP) > 0 \text{ and } \rkk(\Vrlam) > 0.$$ 
\end{corollary}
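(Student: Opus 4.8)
The plan is to divide into cases according to how the Lie rank $\ell$ compares with the stabilizing Lie rank $\rlam$ (recall $|\vec{\lambda}| = 2(\rlam+1)$ and, following the convention of Remark~\ref{CL}, $a_1 \le \rlam$), and to check each of the two implications in the ranges $\ell \ge \rlam$ and $\ell < \rlam$ separately: the first range is governed by Proposition~\ref{rsdivisor} and Corollary~\ref{nonVanishing}, and the second by the $\sL_2$ degree comparison of Corollary~\ref{RORF} together with nonvanishing of $\sL_2$ divisors below the critical level.

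For the forward implication, assume $c_1(\VsP)$ is nontrivial. Then $\rkk(\VsP)>0$ is automatic, since a conformal blocks bundle of rank $0$ has vanishing first Chern class. To deduce $\rkk(\Vrlam)>0$ I argue by contraposition: suppose $\rkk(\Vrlam)=0$. If $\ell \ge \rlam$, then Proposition~\ref{rsdivisor} gives $c_1(\VsP)=c_1(\Vrlam)$, and the right-hand side vanishes because $\Vrlam$ is the zero bundle. If $\ell < \rlam$, then by Fact~\ref{rankfact} and the monotonicity of $\sL_2$ conformal blocks ranks in the level (established along the way in the proof of Proposition~\ref{npoint}) we get $\rkk(\VsP)=\rkk(\mathbb{V}(\sL_2,\vec{\lambda},\ell)) \le \rkk(\mathbb{V}(\sL_2,\vec{\lambda},\rlam))=0$, so $\VsP$ is the zero bundle and $c_1(\VsP)=0$. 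In either case we contradict the nontriviality of $c_1(\VsP)$.

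For the reverse implication, assume $\rkk(\VsP)>0$ and $\rkk(\Vrlam)>0$. If $\ell \ge \rlam$, this is precisely Corollary~\ref{nonVanishing} applied with $r=\ell$. If $\ell < \rlam$, then $\ell$ lies strictly below the critical level of $\vec{\lambda}$; by Fact~\ref{rankfact} the $\sL_2$ bundle $\VsL$ also has positive rank, so the nonvanishing criterion for $\sL_2$ divisors below critical level (\cite[Prop.~4.3]{Fakhruddin}, \cite[Cor.~3.6]{BGMB}) gives $c_1(\VsL)\neq 0$, hence $\deg(\VsL\mid F)>0$ for some $F$-curve $F$. By Corollary~\ref{RORF}, for this same $\ell$ one has $\deg(\VsL\mid F) \le \deg(\VsP\mid F)$, so $\deg(\VsP\mid F)>0$ and $c_1(\VsP)$ is nontrivial.

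I expect the main obstacle to be the case $\ell<\rlam$ of the reverse implication: there one must transport nonvanishing from the $\sL_2$ world to the type~$C$ world even though $c_1(\VsL)$ and $c_1(\VsP)$ are genuinely different classes (cf.\ the examples of Section~\ref{examples}). The key is that this transport is made not by comparing the two classes directly but through the $F$-curve degree inequality of Corollary~\ref{RORF}, which in turn rests on the degree formulas of Lemmas~\ref{DegreeSL} and \ref{DegreeSP}. A secondary point requiring care is the bookkeeping at the boundary $\ell=\rlam$, ensuring that Proposition~\ref{rsdivisor} and Corollary~\ref{nonVanishing} are invoked only in the range $\ell\ge\rlam$ where they apply, and that the monotonicity input is used only for ranks (via Fact~\ref{rankfact}), not for divisor classes.
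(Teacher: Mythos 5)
Your argument is correct and follows the route the paper intends (the paper leaves the proof of this corollary implicit, citing only the $\sL_2$ nonvanishing criterion of Fakhruddin/BGMB): split at the stabilizing Lie rank, handle $\ell \ge \rlam$ via Proposition~\ref{rsdivisor} and Corollary~\ref{nonVanishing}, and for $\ell < \rlam$ transfer nonvanishing from $\VsL$ to $\VsP$ through Fact~\ref{rankfact} and the $F$-curve degree inequality of Corollary~\ref{RORF}. Your bookkeeping (rank monotonicity only for ranks via the proof of Proposition~\ref{npoint}, and the convention $a_1 \le \rlam$ from Remark~\ref{CL}) is exactly the care the statement requires, so no gaps to report.
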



Additionally, using the decomposition of \cite[Prop.~1.2]{BGMB}, Proposition \ref{main} provides new decomposition and scaling identities for the divisors $c_1(\VsP)$ with $\rkk(\VsP) =1$.
\begin{corollary}
For $\vec{\mu}=( \omega_{a_1}, ..., \omega_{a_n})$ such that $0 < a_i \leq m$ and $\vec{\nu}=( \omega_{b_1}, ..., \omega_{b_n})$ such that $0 < b_i\leq \ell$ (so that $\vec{\mu} \in P(\sP_{2m},1)^n$ and $\vec{\nu} \in P(\sP_{2\ell}, 1)^n$ are dominant integral weights for $\sP_{2m}$ and $\sP_{2\ell}$ at level one respectively). If $\rk( \mathbb{V}(\mathfrak{sp}_{2m}, 1, \vec{\mu}) )=  \rk(\mathbb{V}(\mathfrak{sp}_{2\ell}, 1, \vec{\nu})) =1$ then 

$$ c_1(\mathbb{V}(\mathfrak{sp}_{2(m+\ell)}, 1, (\omega_{a_1+b_1}, ..., \omega_{a_n+b_n}))) = c_1(\mathbb{V}(\mathfrak{sp}_{2m}, 1, \vec{\mu})) +  c_1(\mathbb{V}(\mathfrak{sp}_{2\ell}, 1, \vec{\nu})).$$
\end{corollary}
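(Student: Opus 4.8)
The plan is to transfer the asserted type $C$ identity to type $A$ by Proposition~\ref{main}, prove the corresponding identity there using the $\sL_2$ decomposition of \cite[Prop.~1.2]{BGMB}, and transfer back. First I would set up the dictionary: reading $\vec\mu=(\omega_{a_1},\dots,\omega_{a_n})$ and $\vec\nu=(\omega_{b_1},\dots,\omega_{b_n})$ as $\sL_2$-data gives the tuples $\vec a=(a_1,\dots,a_n)$ at level $m$ and $\vec b=(b_1,\dots,b_n)$ at level $\ell$, and since $0<a_i\le m$ and $0<b_i\le\ell$ we have $0<a_i+b_i\le m+\ell$, so $(\omega_{a_1+b_1},\dots,\omega_{a_n+b_n})\in P(\sP_{2(m+\ell)},1)^n$ corresponds to $\vec a+\vec b$ at level $m+\ell$. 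By Fact~\ref{rankfact} the three type $C$ bundles in the statement have the same ranks as $\mathbb{V}(\sL_2,\vec a,m)$, $\mathbb{V}(\sL_2,\vec b,\ell)$, and $\mathbb{V}(\sL_2,\vec a+\vec b,m+\ell)$; in particular the hypotheses force $|\vec a|$ and $|\vec b|$ to be even (a nonzero $\sL_2$-block requires an even weight sum), hence $|\vec a+\vec b|$ is even, so all three bundles fit the framework of Definition~\ref{def1} (permuting the marked points to make weights weakly decreasing, if desired, acts identically on all three and so does not affect the identity; the proof of Proposition~\ref{main} via $F$-curves is in any case insensitive to the ordering).

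Next I would invoke the key external input, \cite[Prop.~1.2]{BGMB}: since $\rk(\mathbb{V}(\sL_2,\vec a,m))=\rk(\mathbb{V}(\sL_2,\vec b,\ell))=1$, the Cartan-sum datum $\vec a+\vec b$ at level $m+\ell$ defines a bundle of rank $0$ or $1$ and one has
$$c_1(\mathbb{V}(\sL_2,\vec a+\vec b,m+\ell))=c_1(\mathbb{V}(\sL_2,\vec a,m))+c_1(\mathbb{V}(\sL_2,\vec b,\ell)).$$
(Alternatively, both the rank bound and this identity follow by concatenating the column/tableau decompositions of \cite[Thm.~1.1]{Hobson} for the two rank-one summands.) Applying Fact~\ref{rankfact} once more, $\mathbb{V}(\sP_{2(m+\ell)},1,(\omega_{a_i+b_i}))$ therefore also has rank $0$ or $1$.

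Finally I would apply Proposition~\ref{main} three times. The rank-one hypotheses give $c_1(\mathbb{V}(\sP_{2m},1,\vec\mu))=c_1(\mathbb{V}(\sL_2,\vec a,m))$ and $c_1(\mathbb{V}(\sP_{2\ell},1,\vec\nu))=c_1(\mathbb{V}(\sL_2,\vec b,\ell))$, and because $\mathbb{V}(\sP_{2(m+\ell)},1,(\omega_{a_i+b_i}))$ was just shown to have rank $0$ or $1$, Proposition~\ref{main} also yields $c_1(\mathbb{V}(\sP_{2(m+\ell)},1,(\omega_{a_i+b_i})))=c_1(\mathbb{V}(\sL_2,\vec a+\vec b,m+\ell))$; substituting the displayed type $A$ identity gives precisely the claimed equality. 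The only step that is more than bookkeeping is the type $A$ decomposition of \cite[Prop.~1.2]{BGMB} — in particular the assertion that the Cartan-sum bundle still has rank at most one, which is exactly what is needed for Proposition~\ref{main} to apply to it; I expect this to be the crux, while the remaining steps are routine uses of Fact~\ref{rankfact} and Proposition~\ref{main}.
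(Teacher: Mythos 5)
Your overall route --- translate everything to $\sL_2$ data via Fact~\ref{rankfact}, invoke the additive identity of \cite[Prop.~1.2]{BGMB} in type $A$, and transfer all three bundles back to type $C$ with Proposition~\ref{main} --- is exactly the route the paper gestures at (it states this corollary without a written proof, citing precisely those two ingredients). The problem is the step you yourself flag as the crux: the claim that rank-one summands force the Cartan-sum bundle to have rank $0$ or $1$. That claim is false. Take $n=4$, $\vec a=(4,2,1,1)$ at level $m=5$ and $\vec b=(1,1,1,1)$ at level $\ell=1$: both $\sL_2$ bundles have rank one (by Lemma~\ref{RankOne}: $s=-1$ with $a=\ell+s$, resp.\ $s=1$ with $a=\ell$), yet the sum $(5,3,2,2)$ at level $6$ has rank $2$, since in the factorization through $(5,3,\mu)\cdot(2,2,\mu)$ both $\mu=2$ and $\mu=4$ contribute. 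So Proposition~\ref{main} cannot be applied to the sum bundle, and it genuinely fails there: on $\M_{0,4}$ the type $A$ degree is $\rkk\cdot s=2\cdot 0=0$ (Lemma~\ref{DegreeSL}), while Lemma~\ref{DegreeSP} gives degree $(\ell+s+1-a)(\ell+s-a)/2=1$ for the type $C$ bundle. The same example shows the unconditional type $A$ identity you attribute to \cite[Prop.~1.2]{BGMB} cannot be what that proposition says, since here it would read $0=0+1$; the cited result must carry extra hypotheses (for instance that the sum bundle again has rank one, or critical-level restrictions), and verifying them is not bookkeeping. Your fallback via concatenating the tableaux of \cite[Thm.~1.1]{Hobson} only gives rank at least one, not at most one, so it does not close this gap either.

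Moreover, the gap is not one you could have patched routinely, because the corollary as literally stated appears to need such an extra hypothesis as well: with $\vec a=(4,2,1,1)$ at $m=5$ (rank one, type $C$ degree $0$) and $\vec b=(2,2,2,2)$ at $\ell=2$ (rank one, type $C$ degree $2$), the sum $(6,4,3,3)$ at Lie rank $7$ has $s=1$, vector bundle rank $2$, and by Lemma~\ref{DegreeSP} its type $C$ degree is $\max\{0,(7+1-6)(7+2-6)/2\}=3\neq 0+2$. So the honest assessment is: you reconstructed the intended argument, but its central step --- rank at most one for the Cartan-sum bundle, which is what licenses the third application of Proposition~\ref{main} --- is false in general, and the statement needs that rank condition (or an equivalent) added as a hypothesis, after which the type $A$ identity must also be checked under the actual hypotheses of \cite[Prop.~1.2]{BGMB} rather than the unconditional form you used.
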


Iterating this result leads to the following scaling behavior. 
 \begin{corollary}\label{CorScale}
Define $\mathbb{V}_N:=\mathbb{V}(\mathfrak{sp}_{2(N\ell)}, 1, ( \omega_{Na_1}, ..., \omega_{Na_n}))$ for $( \omega_{a_1}, ..., \omega_{a_n}) \in  P_1(\sP_{2\ell}, 1)^n$. If $\mathbb{V}_1$ has rank one, then we have the following divisor identity:
$$c_1(\mathbb{V}_N) = N c_1(\mathbb{V}_1).$$
\end{corollary}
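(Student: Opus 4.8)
The plan is to induct on $N$. The base case $N=1$ is the tautology $c_1(\mathbb{V}_1)=1\cdot c_1(\mathbb{V}_1)$. For the inductive step, observe that the data defining $\mathbb{V}_N$ is obtained by ``adding'' that of $\mathbb{V}_{N-1}$ to that of $\mathbb{V}_1$: for each $i$ we have $(N-1)a_i+a_i=Na_i$, while $(N-1)\ell+\ell=N\ell$, and all three bundles are taken at level $1$. Thus, provided we know $\rkk(\mathbb{V}_{N-1})=1$ (and $\rkk(\mathbb{V}_1)=1$, which is the hypothesis), the corollary immediately preceding this one, applied with $m=(N-1)\ell$, $\vec{\mu}=(\omega_{(N-1)a_1},\dots,\omega_{(N-1)a_n})$ and $\vec{\nu}=(\omega_{a_1},\dots,\omega_{a_n})$, gives
$$c_1(\mathbb{V}_N)=c_1(\mathbb{V}_{N-1})+c_1(\mathbb{V}_1)=(N-1)\,c_1(\mathbb{V}_1)+c_1(\mathbb{V}_1)=N\,c_1(\mathbb{V}_1),$$
where the middle equality is the inductive hypothesis. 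So the only genuine content is to carry the rank-one property through the induction, i.e.\ to prove $\rkk(\mathbb{V}_N)=1$ for all $N\geq 1$.

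By Fact~\ref{rankfact}, $\rkk(\mathbb{V}_N)=\rkk(\mathbb{V}(\sL_2,(Na_1,\dots,Na_n),N\ell))$, so the claim reduces to: the property ``$\rkk(\mathbb{V}(\sL_2,\vec{\lambda},\ell))=1$'' is preserved under simultaneously scaling $\vec{\lambda}$ and $\ell$ by a positive integer. For $n=4$ this is immediate from Lemma~\ref{RankOne}: writing $a+b+c+d=2(\ell+s)$, scaling sends $(a,b,c,d,\ell,s)$ to $(Na,Nb,Nc,Nd,N\ell,Ns)$, and every defining condition there ($s\geq 0$; $a,b,c,d\geq s$; $a=\ell$ or $d=s$; and in the other branch $s<0$ and $a=\ell+s$) is homogeneous of degree one, hence preserved, as is the evenness of $|\vec{\lambda}|$. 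For general $n$ I would argue by induction on the number of weights, in the same style as the proof of Proposition~\ref{RORFLemma}: factor $\rkk(\mathbb{V}(\sL_2,N\vec{\lambda},N\ell))$ along a partition $\{1\}\sqcup\{2\}\sqcup\{3\}\sqcup\{4,\dots,n\}$, use the two-point fusion rules to collapse the sum to a single attaching weight, apply \cite[Thm.~1.1]{Hobson} together with the $n=3,4$ homogeneity just noted, and pass to a bundle with fewer weights.

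The main obstacle is exactly this last point: a priori the attaching (internal) weights occurring in the factorization sum for the scaled bundle need not be $N$ times those occurring for the unscaled bundle — the relevant admissibility region scales by $N$, but could in principle acquire new lattice points of the correct parity. One must rule this out under the hypothesis $\rkk(\mathbb{V}(\sL_2,\vec{\lambda},\ell))=1$, using that any admissible attaching datum for the unscaled bundle scales to an admissible datum for the scaled bundle (all the defining inequalities, and the evenness constraints, are scale-invariant) together with the homogeneity of the rank-one conditions of \cite[Thm.~1.1]{Hobson}, so that the scaled factorization sum again has a single nonzero term equal to $1$. Once $\rkk(\mathbb{V}_N)=1$ is established for all $N$, the divisor induction of the first paragraph goes through verbatim and yields $c_1(\mathbb{V}_N)=N\,c_1(\mathbb{V}_1)$.
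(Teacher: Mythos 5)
Your first paragraph is exactly the paper's (very terse) argument: the paper proves this corollary simply by iterating the preceding additive identity, so the induction on $N$ with $\vec{\mu}=(\omega_{(N-1)a_1},\dots,\omega_{(N-1)a_n})$ and $\vec{\nu}=(\omega_{a_1},\dots,\omega_{a_n})$ is the intended route. You are also right to isolate the hidden hypothesis: each application of that corollary needs $\rkk(\mathbb{V}_{N-1})=1$, and this cannot be extracted from the additive identity itself, because rank one of the two summands does not in general force rank one of the combined data. For instance $\mathbb{V}(\sL_2,(2,2,1,1),2)$ and $\mathbb{V}(\sL_2,(1,1,2,2),2)$ both have rank one, while the pointwise sum $\mathbb{V}(\sL_2,(3,3,3,3),4)$ has rank two by Lemma~\ref{RankOne}. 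So the statement $\rkk(\mathbb{V}_N)=1$ for all $N$ is genuine content that has to be supplied (the paper leaves it implicit), and via Fact~\ref{rankfact} it amounts to: rank one for $\sL_2$ is preserved under the simultaneous scaling $(\vec{\lambda},\ell)\mapsto(N\vec{\lambda},N\ell)$.

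This is precisely where your write-up has a gap. You settle $n=4$ correctly by the homogeneity of the conditions in Lemma~\ref{RankOne}, but for general $n$ you only sketch a factorization induction and yourself name the unresolved obstacle: after scaling, the admissible attaching weights in the factorization sum need not be $N$ times the original ones, so the claim that the scaled sum again has a single nonzero term is asserted rather than proved. As written, the argument is incomplete at exactly the step the whole corollary hinges on. The efficient repair stays inside the paper's toolkit and avoids factorization altogether: the full rank-one classification of $\mathbb{V}(\sL_2,\vec{\lambda},\ell)$ in \cite[Thm.~1.1]{Hobson} is given by conditions invariant under simultaneous scaling of the weights and the level, exactly as you checked in the four-point specialization, so $\rkk(\mathbb{V}(\sL_2,N\vec{\lambda},N\ell))=1$ follows at once from $\rkk(\mathbb{V}(\sL_2,\vec{\lambda},\ell))=1$; with that in hand, the induction of your first paragraph goes through and yields $c_1(\mathbb{V}_N)=Nc_1(\mathbb{V}_1)$. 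Make that rank-preservation step an explicit lemma with a complete justification; your current text acknowledges it but does not discharge it.
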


\begin{remark}
Similar scaling behavior appears for $\mathfrak{sl}_{r+1}$ in \cite[Cor.~4.6]{BGMB}, for $\mathfrak{sl}_2$ and $\vec{\lambda}=(\omega_1, ..., \omega_1)$ in \cite[Prop.~5.2]{GJMS13}, and analogously for $\mathfrak{so}_{2r+1}$ and $\vec{\lambda}=(\omega_1, ..., \omega_1)$ in \cite[Thm.~1.2]{MUK14}.
\end{remark}

\bibliography{TypeCLevelOneArXiv}

\end{document}